\newcommand{\EXCLUDE}[1]{}
\newcommand{\be}{\begin{equation}}
\newcommand{\ee}{\end{equation}}
\newcommand{\bea}{\begin{eqnarray}}
\newcommand{\no}{\nonumber}
\newcommand{\non}{\nonumber}
\newcommand{\eea}{\end{eqnarray}}
\newcommand{\sP}{\mathsf{P}}  
\newcommand{\pr}[1]{\mathsf{P}\left( #1 \right)}
\newcommand{\sE}{\mathsf{E}} 
\newcommand{\EXP}[1]{\mathsf{E}\!\left(#1\right) }
\newcommand{\COV}[1]{\mathsf{Cov}\left( #1 \right)}
\newcommand{\1}[1]{\mathsf{1}\!\left[\,#1\,\right] }
\newtheorem{thm}{Theorem}[section]
\newtheorem{cor}[thm]{Corollary}
\newtheorem{lem}[thm]{Lemma}
\newtheorem{prop}[thm]{Proposition}
\newtheorem{remark}[thm]{Remark}
\newenvironment{rem}{\begin{remark}\rm}{\end{remark}}
\newtheorem{example}[thm]{\bf Example}
\newenvironment{exe}{\begin{example}\rm}{\end{example}}
\def\rar{\rightarrow}
\def\la{\langle}
\def\ra{\rangle}
\newcommand{\al}{\alpha}
\newcommand{\lam}{\lambda}
\newcommand{\Lam}{\Lambda}
\newcommand{\sg}{\sigma}
\def\Ph{\Phi}
\def\mR{\mathbb{R}}
\def\mZ{\mathbb{Z}}
\def\mN{\mathbb{N}}
\def\mE{\mathbb{E}}
\def\mL{\mathbb{L}}
\def\mM{\mathbb{M}}
\newcommand{\ur}{{\underline r}}
\newcommand{\ovr}{{\overline r}}
\newcommand{\md}{\text{d}}
\newcommand{\cX}{{\mathcal X}}
\newcommand{\cN}{{\mathcal N}}
\def\1{\mathbf{1}}
\begin{document}

\begin{frontmatter}

\title{Clustering, percolation and directionally convex ordering of
point processes}
\runtitle{Clustering, percolation and $dcx$ ordering}

\begin{aug}

\author{\fnms{Bart{\l}omiej}
  \snm{B{\l}aszczyszyn}\corref{}\thanksref{inria,ens}\ead[label=e1]{Bartek.Blaszczyszyn@ens.fr}} and
\author{\fnms{D.} \snm{Yogeshwaran}\thanksref{ens,inria}\ead[label=e2]{yogesh@di.ens.fr}}
\address{INRIA/ENS\\
23 av. d'Italie\\
CS 81321\\
75214 Paris Cedex 13\\
France\\
\printead{e1}\\
\phantom{E-mail:\ }\printead*{e2}}
\affiliation{INRIA\thanksmark{inria} and ENS Paris\thanksmark{ens}}

\runauthor{B. B{\l}aszczyszyn and D. Yogeshwaran}

\end{aug}

\begin{abstract}\
Heuristics indicate that point processes exhibiting
clustering of points have larger critical radius $r_c$ for
the percolation of their continuum percolation models  than spatially
homogeneous point processes.  
It has already been shown, and we reaffirm it in this paper, that 
the $dcx$ ordering of point processes is suitable to compare their
clustering tendencies. Hence, it was tempting to conjecture that~$r_c$
is increasing in $dcx$ order. 
Some numerical evidences support this conjecture 
for a special class of point processes, called perturbed lattices,
which are ``toy models'' for determinantal and permanental point
processes. However the conjecture is not true in full generality,
since one can  construct a Cox point process with degenerate critical
radius~$r_c=0$, that is $dcx$ larger than a
given homogeneous Poisson point process, for which this radius is
known to be strictly positive. Nevertheless, the aforementioned
monotonicity in $dcx$ order can be proved,
for a nonstandard critical radius $\ovr_c$ (larger than $r_c$), 
related to the Peierls argument.
Moreover, we show the reverse monotonicity for another 
nonstandard critical radius $\ur_c$
(smaller than $r_c$). This gives uniform lower and upper bounds on
$r_c$ for all point processes $dcx$ smaller than some given point process.
Moreover, we show that point processes $dcx$ smaller than a 
homogeneous Poisson point process
admit uniformly non-degenerate lower and upper bounds on~$r_c$.
In fact, all the above results hold under weaker assumptions on the
ordering of void probabilities or factorial moment measures
only. Examples of point processes comparable to Poisson point
processes in this weaker sense include determinantal and permanental
point processes with trace-class integral kernels. 
More generally, we show that point processes $dcx$ smaller than a 
homogeneous Poisson point 
processes exhibit phase transitions in certain percolation
models based on the level-sets of additive shot-noise fields of these point
process.  Examples of such models are $k$-percolation and 
SINR-percolation.
\end{abstract}
\begin{keyword}[class=AMS]
\kwd[Primary ]{60G55, 
               82B43, 
               60E15
}
\kwd[; secondary ]{60K35, 
                   60D05, 
                   60G60
}
\end{keyword}
\begin{keyword}
\kwd{point process}
\kwd{Boolean model}
\kwd{clustering}
\kwd{percolation}
\kwd{critical radius}
\kwd{phase transition}
\kwd{shot-noise field}
\kwd{level-sets}
\kwd{directionally convex ordering}
\kwd{perturbed lattice}
\kwd{determinantal}
\kwd{permanental point processes}
\kwd{sub-(super-)Poisson point process.}
\end{keyword}
\end{frontmatter}
\clearpage 
\section{Introduction}
\label{sec:intro}
\paragraph{Heuristic}
Consider a point process $\Phi$ in the $d$-dimensional Euclidean
space $\mR^d$.  For a given ``radius'' $r \ge 0$, let us join by an
edge any two points of $\Phi$, which are at most at a distance of $2r$ from each
other. Existence of an infinite component in the resulting graph is
called {\em percolation} of the continuum model based on
$\Phi$. Clustering of $\Phi$ roughly means that the points of $\Phi$
lie in clusters (groups) with the clusters being well spaced out. When
trying to find the minimal $r$ for which the continuum  model based on~$\Phi$
percolates, we observe that points lying in the same cluster of $\Phi$ will be
connected by edges for some smaller $r$ but points in different
clusters need a relatively higher $r$ for having edges between
them. Moreover, percolation cannot be achieved without edges between
some points of different clusters. It seems to be 
evident that spreading points
from clusters of $\Phi$ ``more homogeneously'' in the space would
result in a decrease of the radius $r$ for which the percolation takes
place. This is a heuristic explanation why clustering in a point
process $\Phi$ should increase the {\em critical radius}
$r_c=r_c(\Phi)$ for the percolation
of the continuum percolation model on~$\Phi$, called also the
Gilbert's disk graph or the Boolean model with fixed spherical
grains. 

\paragraph{Clustering and  $dcx$ order}
To make a formal conjecture out of the above heuristic,
one needs to adopt a tool to compare clustering properties of
different point processes. In this regard, we use {\em directionally convex} ($dcx$) order in this article. The $dcx$ order of random vectors is an integral order generated by twice differentiable functions with all their second order partial derivatives being non-negative.\footnote{We remark here that $dcx$ order was initially developed for random vectors  (\cite{MeesterSh93,MeesterSh99,Shaked90})
partially in conjunction  with Ross-type conjectures,
which predicted that queues with a variable input perform
worse (cf~\cite{Miyoshi04a}).  Much earlier to these works, a
comparative study of queues using the supermodular order and motivated by
neuron-firing models can be found in~\cite{Huffer84a}.}
Its extension  to point processes consists in comparison
of vectors of number of points in every possible finite collection of bounded Borel subsets of the space. Our choice has its roots in \cite{snorder}, where one shows various results as well as examples indicating that the $dcx$ order on point processes implies ordering of several well-known clustering characteristics in spatial statistics such as Ripley's K-function and second moment densities. Namely, a point process that is larger in the $dcx$ order exhibits more clustering,
while having the same mean number of points in any given set.

\paragraph{Conjecture}
The above discussion tempts one to  conjecture that $r_c$ is increasing with respect to the $dcx$ ordering of the underlying point processes; i.e.,  
$\Phi_1 \leq_{dcx} \Phi_2$ implies $r_c(\Phi_1) \leq r_c(\Phi_2)$.
Since the critical radius $r_c$ for the percolation does not
seem to have any explicit representation as a $dcx$ function of
vectors of number of points in a finite collection of bounded Borel
subsets, we are not able to compare straightforwardly $r_c$  of $dcx$
ordered point processes. At the same time, numerical evidences
gathered for a certain  class of point processes, called perturbed lattice point
processes, were  supportive of this conjecture. But as it turns out,
the conjecture is not true in full generality and we will present a
counter-example.    

\paragraph{Counterexample}
More specifically,  for a given Poisson point process, we 
 will construct a $dcx$ larger than it Poisson-Poisson cluster point process
(a special case of doubly-stochastic Poisson, called also Cox, point process),  
whose critical radius is null (hence smaller than that of the given Poisson point process, which is known to be positive). In this Poisson-Poisson cluster  point process points concentrate  on some carefully chosen 
larger-scale structure, which itself has good percolation properties. 
In this case, the points concentrate in clusters, however we cannot say that  clusters are well spaced out. Hence,  this example does not 
contradict our initial heuristic explanation of  why clustering in a
point process should increase the critical radius for the percolation.
It reveals rather, that $dcx$ ordering, while being able to compare
the clustering tendency of point processes, is not capable of 
comparing macroscopic structures of clusters.

\paragraph{Comparison results} 
What we are able to do with $dcx$ order
is to compare some  {\em nonstandard critical radii} $\ovr_c$ and $\ur_c$ related, respectively, to the finiteness of the expected number of void circuits around the origin and asymptotic of the expected number of long occupied paths from the origin in suitable discrete approximations of the continuum model. These new critical radii sandwich the ``true'' one, $\ur_c(\Phi) \le r_c(\Phi) \le \ovr_c(\Phi)$. We show that if  $\Phi_1\le_{dcx}\Phi_2$, then as
suggested by the heuristic, $\ovr_c(\Phi_1)\le\ovr_c(\Phi_2)$.
However, we obtain the {\em reversed inequality} for the other nonstandard
critical radius: $\ur_c(\Phi_1)\ge\ur_c(\Phi_2)$.
This reversed inequality can be also explained heuristically, by noting that
whenever there is at least one path of some given length in a point process that
clusters more, there might be actually so many such paths, that the
inequality for the expected numbers of paths are reversed. However, as we have
mentioned above, 
the fact is that there are {\em examples of $dcx$ ordered point processes,
for which the inequality is  reversed for the critical radius
  $r_c$ itself}, namely examples of 
$\Phi_1\le_{dcx} \Phi_2$ with $r_c(\Phi_1)> r_c(\Phi_2)$. 

\paragraph{Non-trivial phase transitions}
A more positive interpretation of the aforementioned results is as follows.   
Combining the two opposite inequalities, we obtain as  a corollary
that if $\Phi_1\le_{dcx}\Phi_2$ then the usual critical radius of the
$dcx$ smaller point process can be sandwiched between the two nonstandard
critical radii of the larger one,
$$ \ur_c(\Phi_2) \leq \ur_c(\Phi_1) \leq r_c(\Phi_1) \leq \ovr_c(\Phi_1) \leq \ovr_c(\Phi_2).
$$
In more loose terms, the above result can be rephrased as follows:
if a point process $\Phi_2$ 
exhibits the following {\em double phase transition} 
$0<\ur_c(\Phi_2)\le\ovr_c(\Phi_2)<\infty$,  stronger than the
usual one ($0<r_c(\Phi_2)<\infty$), then its two nonstandard critical radii act as
non-degenerate bounds on the critical radius $r_c(\Phi)$, uniformly
valid  over all $\Phi\le_{dcx}\Phi_2$, thus in particular implying 
the usual phase transition $0<r_c(\Phi)<\infty$ for all such $\Phi$. 

One would naturally like to verify the aforementioned double phase
transitions for the homogeneous Poisson point process.
A direct verification eludes us at the moment. However, using a slightly
modified approach, we are able to obtain the same result (uniform
bounds on the critical radius $r_c$) for  all
homogeneous point processes that are $dcx$ smaller than the Poisson
point process of a given  intensity; we call them {\em homogeneous
  sub-Poisson point processes}. 

In fact, all the above results regarding comparison of 
$\ur_c$ and $\ovr_c$, as well as the non-trivial phase-transition 
for homogeneous sub-Poisson points processes,
can be proved under a weaker assumption than the $dcx$ assumption.
Namely, it is enough to assume that the void probabilities and factorial moment
measures of the respective point processes are ordered.
However, $dxc$ ordering needs to be
 assumed to show non-trivial phase transitions
for other models based on level-sets of
additive or extremal shot-noise fields.

\paragraph{Examples}
In this article, we also provide new examples of point processes
comparable in $dcx$ order.
In particular, families of point processes monotone in $dcx$ order are
constructed using  some random {\em perturbations
of lattices}. Numerical experiments performed for these processes
suggest  monotonicity of $r_c$ with respect
to $dcx$ order within this class. 

Recently, {\em determinantal} and {\em permanental point processes}
have been 
attracting a lot of attention. They are known as examples of point
processes that cluster, respectively, less and more than the Poisson
point process of the same intensity. Assuming trace-class integral kernels,
it is relatively easy to show that these
point processes,  
have void probabilities and moment measures comparable to those of
the Poisson point process. Thus, in particular, determinantal point processes
admit uniform bounds on the critical radius~$r_c$ discussed
above. Moreover, we show that determinantal and permanental point processes
are $dcx$ comparable to the Poisson point process on mutually disjoint  {\em simultaneously observable sets}.
Our proof of this latter result is based on
the known representation of the distribution of these point processes on
such sets, as well as on  Lemma~\ref{lem:MeesterSh-like}
regarding $dcx$ ordering of random vectors, which we
believe is new, albeit similar to~\cite[Lem\-ma~2.17]{MeesterSh93}.
Interestingly enough, our perturbed lattice examples
admit a very similar representation, but this time on all
sets, and it is the same Lemma~\ref{lem:MeesterSh-like} that allows us
to prove their $dcx$ ordering.

\paragraph{Related work}
Let us now make some remarks on other comparison
studies in continuum percolation. Most of the results  regard
comparison of different models driven by the same (usually Poisson)
point process. In~\cite{Jonasson01}, it was shown that  the critical intensity for
percolation of the Boolean model on the plane is minimized when the shape of
the typical grain is a triangle and maximized when it is a centrally symmetric set. Similar result
was proved in~\cite{Roy02} using more probabilistic arguments
for the case when the shapes are taken over the set of all
polygons. This idea was also used for comparison of percolation models
with different shapes in three dimensions. It is known for many
discrete graphs that bond percolation is strictly easier than site
percolation.
A similar result as well as strict
inequalities for spread-out connections in random connection model has
been proved in~\cite{Franc_etal05,Franc_etal10}.
The underlying point process in all the above studies has been a Poisson
point process and it is here that our study differs from them.

Critical radius of the continuum percolation model on
the hexagonal lattice perturbed by
the Brownian motion is studied in a recent
pre-print~\cite{BenjaminiStauffer2011}.%
This is an example of our perturbed lattice and as such it is a
$dcx$ sub-Poisson point process.%
\footnote{More precisely, at any time $t$ of the evolution of the
Brownian motion, it is $dcx$ smaller than  a non-homogeneous
Poisson point process of some intensity which depends on~$t$,
and converges to the homogeneous one for $t\to\infty$.}
It is shown that for a short enough time of the evolution of the Brownian
motion the critical radius is not larger than that of the
non-perturbed lattice, and for large times, the critical radius asymptotically
converges to  that of the homogeneous Poisson point process. This latter result
is shown by some coupling  in the sense of set inclusion of point processes.
Many other inequalities in percolation theory depend on
such coupling arguments (cf. e.g.~\cite{Liggett97}),
which  for obvious reasons are not suited to comparison of point
processes with the same mean measures.
For studies of this type, convex
orders from the theory of stochastic ordering turn out to be quite
useful. Our general goal in this article is to show the utility of
these tools for comparison of 
properties of continuum percolation models.

\paragraph{Summary of results}
Let us be more specific regarding the contributions of this paper
and quickly summarize our results.
\begin{list}{\labelitemi}{\leftmargin=2em}
\item  Point processes {\em smaller in $dcx$ order have smaller void
  probabilities} as well as imply
{\em smaller void probabilities of their Boolean models} (Propositions \ref{Prop:voids_pp} and \ref{prop:ord_cap_fnl}).
\item {\em For $\Phi_1\le_{dcx}\Phi_2,$ we have
$\ovr_c(\Phi_1)\le\ovr_c(\Phi_2)$ and $\ur_c(\Phi_1)\ge
\ur_c(\Phi_2)$}, where $\ur_c$ and
$\ovr_c$ are certain nonstandard critical radii 
bounding the (standard) critical radius $r_c$, $\ur_c\le r_c\le
\ovr_c$, for the percolation of 
the Boolean model with spherical grains
(Corollary \ref{cor:ord_crit_rad_racs} and Proposition \ref{prop:lower_crit_rad}). 
\item {\em For any stationary point process 
having smaller void probabilities and 
factorial moment measures than the Poisson point process
of the same intensity, $0<r_c<\infty$}.
Moreover, {\em the critical
radii $r_c$ are uniformly bounded away of 0 and of $\infty$ over all such
point processes  of a given intensity} (Propositions~\ref{thm:sinr_poisson_perc} and \ref{thm:perc_sinr_sub-Poisson}).
\item The above result is extended to the {\em $k-percolation$ 
of the Boolean model} (percolation of the subset
covered by at least $k$ of its grains) {\em under the assumption that
 the point process is sub-Poisson}
(Proposition~\ref{thm:k_perc_sub-Poisson_pp}). 
\item We prove  the {\em existence of the percolation regime in 
certain Signal-to-interferen\-ce-and-noise ratio coverage models
driven by homogeneous sub-Poisson
point processes}  (Propositions~\ref{thm:sinr_poisson_perc}
and~\ref{thm:perc_sinr_sub-Poisson}).
These are examples of germ-grain models with grains jointly 
dependent on certain shot-noise fields;
their percolation has been previously studied assuming a Poisson point process of germs.
\item We prove that {\em perturbed lattices with convexly ordered 
point replication kernels  are $dcx$ ordered}
(Proposition~\ref{p.pert-lattice}). This provides 
examples of {\em $dcx$-monotone families  of point processes
comparable to a given Poisson process} (Example
\ref{ex:perturbed.lattice}). 
\item We give an {\em example of a Cox point process with degenerate
    critical radius, $r_c=0$, which is $dcx$ larger than a given 
homogeneous Poisson process} (Example \ref{ex.PoPoClust}). 
\item We show that {\em determinantal and permanental point processes
 with trace-class integral kernels  have void probabilities and
 factorial moment measures ordered with  
    respect to  Poisson point processes of the same mean measure.
     Moreover, we show that they are
    $dcx$-ordered with respect to such Poisson point processes on
    mutually disjoint simultaneously observable sets.} (Examples \ref{ex.det} and~\ref{ex.perm}).
\end{list}

\paragraph{Paper organization} The necessary notions and notations 
are introduced in Section~\ref{s.Notation}. In Section~\ref{sec:prelim}, we argue that $dcx$ ordering is suitable for the comparison of clustering properties of point processes. In Section~\ref{s.Boolean}, we state and prove our
comparison results for two nonstandard critical radii $\ur_c$ and
$\ovr_c$.
In Section~\ref{sec:sub-poisson}, we use  generic exponential
estimates to obtain non-degenerate bounds for the percolation of
various percolation models   driven by homogeneous sub-Poisson
point processes. New examples of $dcx$ ordered  point processes,
including the counter-example to the conjecture, 
are provided in Section~\ref{sec:ex}.  
Lemma~\ref{lem:MeesterSh-like} used for showing $dcx$ ordering of
perturbed lattices and determinantal and permanental point processes
is proved in the Appendix.

\section{Notions and notation}
\label{s.Notation}
\subsection{Point processes}
Let $\texttt{B}^d$ be the Borel $\sg$-algebra and $\texttt{B}_b^d$ be
the $\sg$-ring of {\em bounded (i.e., of compact
  closure) Borel subsets} (bBs) in the
$d$-dimensional Euclidean space $\mR^d$. Let $\mM^d =
\mathbb{M}(\mR^d)$ be the space of non-negative Radon (i.e., finite on
bounded sets) measures on
$\mR^d$. The Borel $\sg$-algebra $\mathcal{M}^d$ is generated by the
mappings $\mu \mapsto \mu (B)$ for all $B$ bBs. A random measure
$\Lambda$ is a random element in $(\mM^d,\mathcal{M}^d)$ i.e, a
measurable map from a probability space $(\Omega,\mathcal{F},\sP)$ to
$(\mM^d,\mathcal{M}^d)$. We shall call a random measure $\Phi$ a {\em point
process} if $\Phi \in \bar{\mN}^d$, the subset of counting measures in
$\mM^d$. The corresponding $\sg$-algebra is denoted by
$\mathcal{N}^d$. Further, we shall say that a point process (pp) $\Phi$ is
simple if a.s. $\Phi(\{x\}) \leq 1$ for all $x \in \mR^d$. Throughout,
we shall use $\Lambda$ for an arbitrary random measure and $\Phi$ for
a pp. This is the standard framweork for random measures
(see \cite{Kallenberg83}).

As always, a pp or a random measure on $\mR^d$ is said to be {\em stationary} if its distribution is invariant with respect to translation by
vectors in~$\mR^d$.

\subsection{Boolean model}
\label{sss.Boolean_model}
Given  (the distribution of) a random closed set $G$
 (racs, see~\cite{Matheron75} for the standard measure-theoretic framework), and a pp $\Phi$, by a {\em
  Boolean model} with the pp of {\em germs} $\Phi$  and the
{\em typical grain} $G$, we
call the random set $C(\Phi,G) =\bigcup_{X_i\in\Phi} \{X_i+G_i\}$, where
$x+A=\{x+A: a\in A\}, a\in\mR^d, A\subset\mR^d$ and given $\Phi$,
$\{G_i\}$ is a sequence of i.i.d. racs, distributed as $G$.

A commonly made technical assumption about the distribution of $\Phi$
and $G$ is that for any compact $K\subset\mR^d$, the expected number of germs $X_i$ such that $(X_i+G_i)\cap K\not=\emptyset$ is finite.
This assumption, called for short ``local finiteness of the Boolean
model'' guarantees in particular that $C(\Phi,G)$ is a racs.
All the Boolean models considered in this article will be assumed to have the local finiteness property.

Denote by $B_x(r)$, the ball of radius $r$ centered at $x$.
We shall call $G$ a fixed grain if there exists a closed set $B$ such
that $G = B$ a.s.. In the case $B = B_O(r)$, where $O$ is the origin of $\mR^d$
and $r \ge 0$ constant, we shall denote the Boolean model by $C(\Phi,r)$. Note that $C(\Phi,r)$ is a racs
and so is $C(\Phi,G)$ for any $G$ a.s. bounded.

\subsection{Directionally convex ordering}
Let us quickly introduce the theory of directionally convex
ordering. We refer the reader to \cite[Section 3.12]{Muller02} for a
more detailed introduction.

The order $\leq$ on $\mR^k$  will  denote the component-wise partial
order, i.e., $(x_1,\ldots,x_k)\allowbreak \leq (y_1,\ldots,y_k)$ if
$x_i \leq y_i$ for every $i$. For $p,q,x,y\in\mR^k$ we shall
abbreviate $p \le \min \{x,y\}$ to $ p \le \{x,y\}$ and  $\max \{x,y\} \le q$ to
$\{x,y\} \leq q$.

One introduces the following families
of  {\em Lebesgue-measurable} functions on $\mR^k$:
A function $f:\mR^k \rar \mR$ is said to be
{\em directionally convex}~($dcx$) if for every $x,y,p,q \in \mR^k$ such
that $p \leq \{x,y\} \leq q$ and $x+y = p+q$, we have that
$$f(x) + f(y) \leq f(p) + f(q)\,.$$
A function $f$ is said to be {\em directionally concave}~($dcv$) if the inequality in the last equation is reversed.
A  function is said to be {\em supermodular (sm)} if the above inequality
holds for all $x,y\in\mR^k$ with $p=\min \{x,y\}$ and $q=\max \{x,y\}$.
A Lebesgue-measurable function is $dcx$ if and
only if it is $sm$ and coordinate-wise convex.
A {\em convex} function on $\mR$  will be denoted by $cx$.
We abbreviate {\em increasing} and $dcx$ by $idcx$, {\em decreasing}
and $dcx$ by $ddcx$ and similarly for $dcv$ functions.

Let $\mathfrak{F}$ denote some class of  Lebesgue-measurable
functions from $\mR^k$ to $\mR$ with the dimension $k$ being understood from the context. In the remaining part of the article, we will mainly consider $\mathfrak{F}$ to be one among the class of $dcx, idcx, idcv, dcv, ddcv, ddcx$ functions. 
Unless mentioned, when we state $\sE(f(X))$ for $f \in \mathfrak{F}$ and $X$ a random vector, we
assume that the expectation exists.
Suppose $X$ and $Y$ are real-valued random vectors of the
same dimension. Then  {\em $X$ is said to be less than $Y$ in
$\mathfrak{F}$ order} if
$\sE(f(X)) \leq \sE(f(Y))$ for all $f \in \mathfrak{F}$
such that both the expectations are finite. We shall
denote it as $X \leq_{\mathfrak{F}} Y$.
This property clearly regards only the distributions of $X$ and $Y$,
and hence sometimes we will say that the law of $X$ is less in
$\mathfrak{F}$ order than that of $Y$.

Suppose $\{X(s)\}_{s \in S}$ and $\{Y(s)\}_{s \in S}$ are
real-valued random fields, where  $S$ is an arbitrary index set.
We say that $\{X(s)\} \leq_{\mathfrak{F}}
\{Y(s)\}$ if for every $k \geq 1$ and $s_1,\ldots,s_k\in S$,
$(X(s_1),\ldots,X(s_k)) \leq_{\mathfrak{F}}(Y(s_1),\ldots,Y(s_k))$.

A random measure $\Lambda$ on $\mR^d$
can be viewed as the random
field $\{\Lambda(B)\}_{B \in \texttt{B}_b^d}$. With the
aforementioned definition of $dcx$ ordering for random fields, for two
random measures on $\mR^d$, one
says that  $\Lambda_1(\cdot) \leq_{dcx} \Lambda_2(\cdot)$, if for any $B_1, \ldots, B_k$ bBs in $\mR^k$,
\begin{equation}
\label{defn:dcx_rm}
(\Lambda_1(B_1),\ldots,\Lambda_1(B_k)) \leq_{dcx} (\Lambda_2(B_1),\ldots,\Lambda_2(B_k)).
\end{equation}
The definition is similar for other orders, i.e., when $\mathfrak{F}$
is the class of  $idcx, idcv,\allowbreak ddcx$ or $ddcv$ functions.
It was shown in~\cite{snorder} that it is enough to verify the above
condition for $B_i$  mutually disjoint.

In order to avoid technical difficulties, we will
consider here only random measures (and pp) whose  {\em mean measures} $\sE(\Lambda(\cdot))$
are Radon (finite on bounded sets). For such random measures, $dcx$ order is a
transitive order~\footnote{\label{fn:f-Ox}Due to the fact that
each $dcx$ function
can be monotonically approximated by
$dcx$ functions $f_i(\cdot)$ which satisfy $f_i(x)=O(||x||_\infty)$
at infinity, where $||x||_\infty$ is the $L_\infty$ norm on
the Euclidean space; cf.~\cite[Theorem~3.12.7]{Muller02}.}.
Note also that
$\Lambda_1(\cdot) \leq_{dcx} \Lambda_2(\cdot)$ implies the {\em equality
of their  mean measures}: $\sE(\Lambda_1(\cdot)) =\sE(\Lambda_2(\cdot))$.
For more details on $dcx$ ordering
of pp and random measures, see~\cite{snorder}.

\section{Clustering and $dcx$ ordering of point processes}
\label{sec:prelim}
In this section, we will present some basic results on $dcx$ ordering of
pp that will allow us to see this order as a tool to compare clustering of
pp. It will be shown that pp smaller in $dcx$ order clusters less. These results
involve comparison of some usual statistical descriptors of spatial
(non-)homogeneity as well as void probabilities.
Examples of comparable pp will be provided in
Section~\ref{ss:sim_ex} and more extensively in Section~\ref{sec:ex}.

\subsection{Statistical descriptors of spatial (non-)homogeneity}
Looking at Figure~\ref{f.Lattice1}, it is  intuitively obvious that
some pp cluster less than others.
However, to the best of our knowledge, there
has been no mathematical formalization of such a statement
based on the theory of stochastic ordering.
In what follows, we present a few reasons to believe that  {\em pp that are smaller in $dcx$ order exhibit less clustering}.

\begin{figure*}[!t]
\begin{center}
\begin{minipage}[b]{0.25\linewidth}
\includegraphics[width=1.\linewidth]{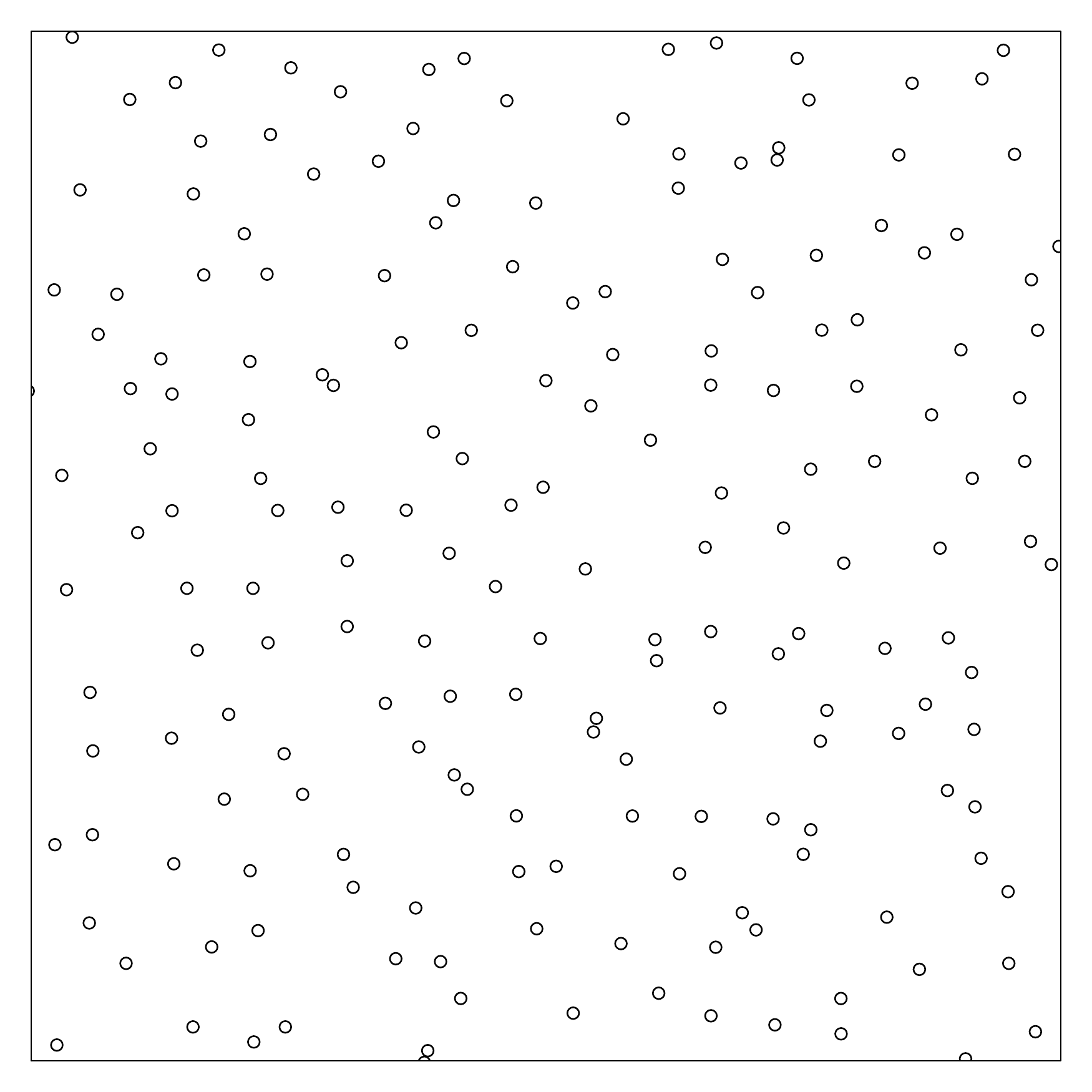}\\
\centerline{simple perturbed lattice}
\end{minipage}
\hspace{2em}
\begin{minipage}[b]{0.25\linewidth}
\includegraphics[width=1.\linewidth]{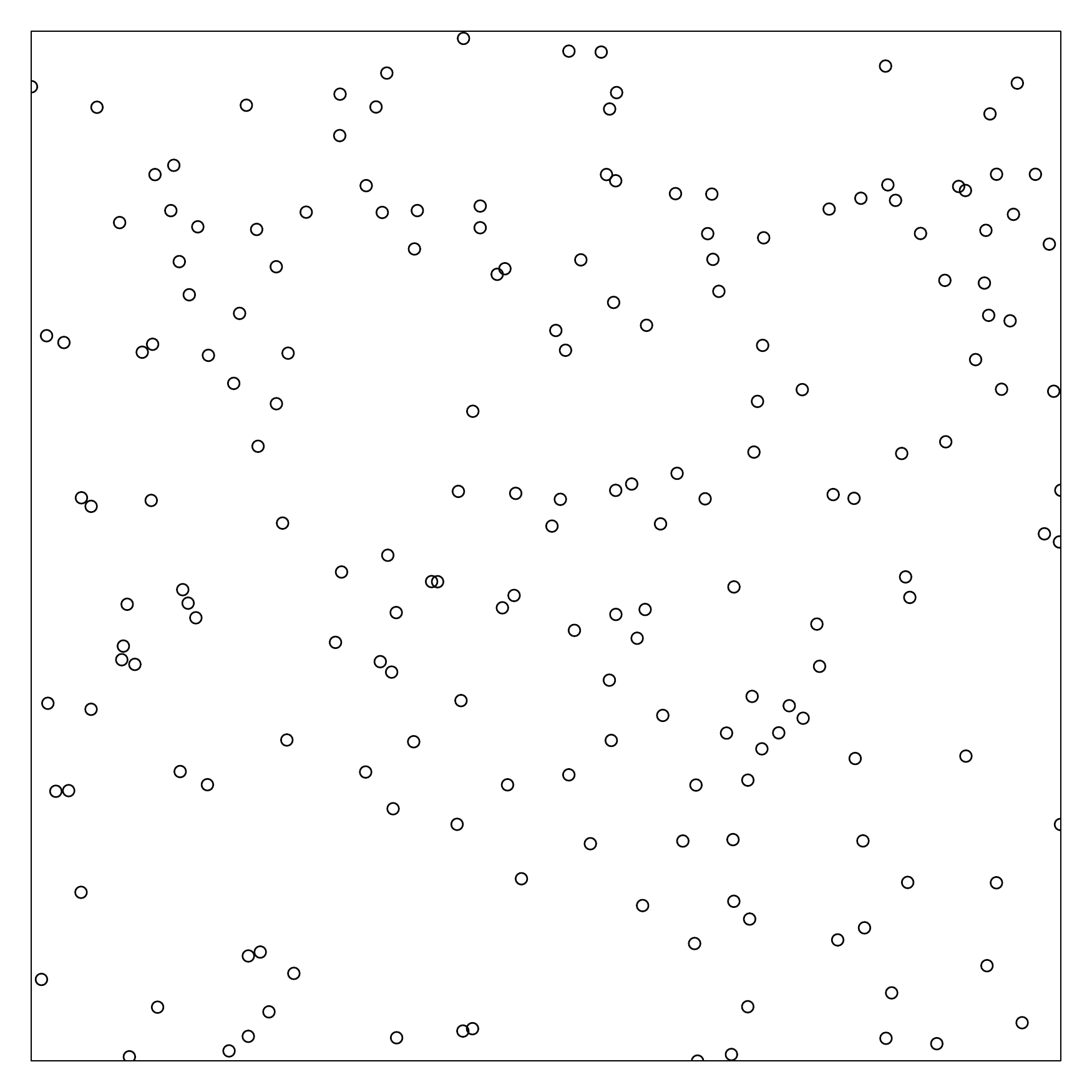}\\
\centerline{Poisson point process}
\end{minipage}
\hspace{2em}
\begin{minipage}[b]{0.25\linewidth}
\includegraphics[width=1.\linewidth]{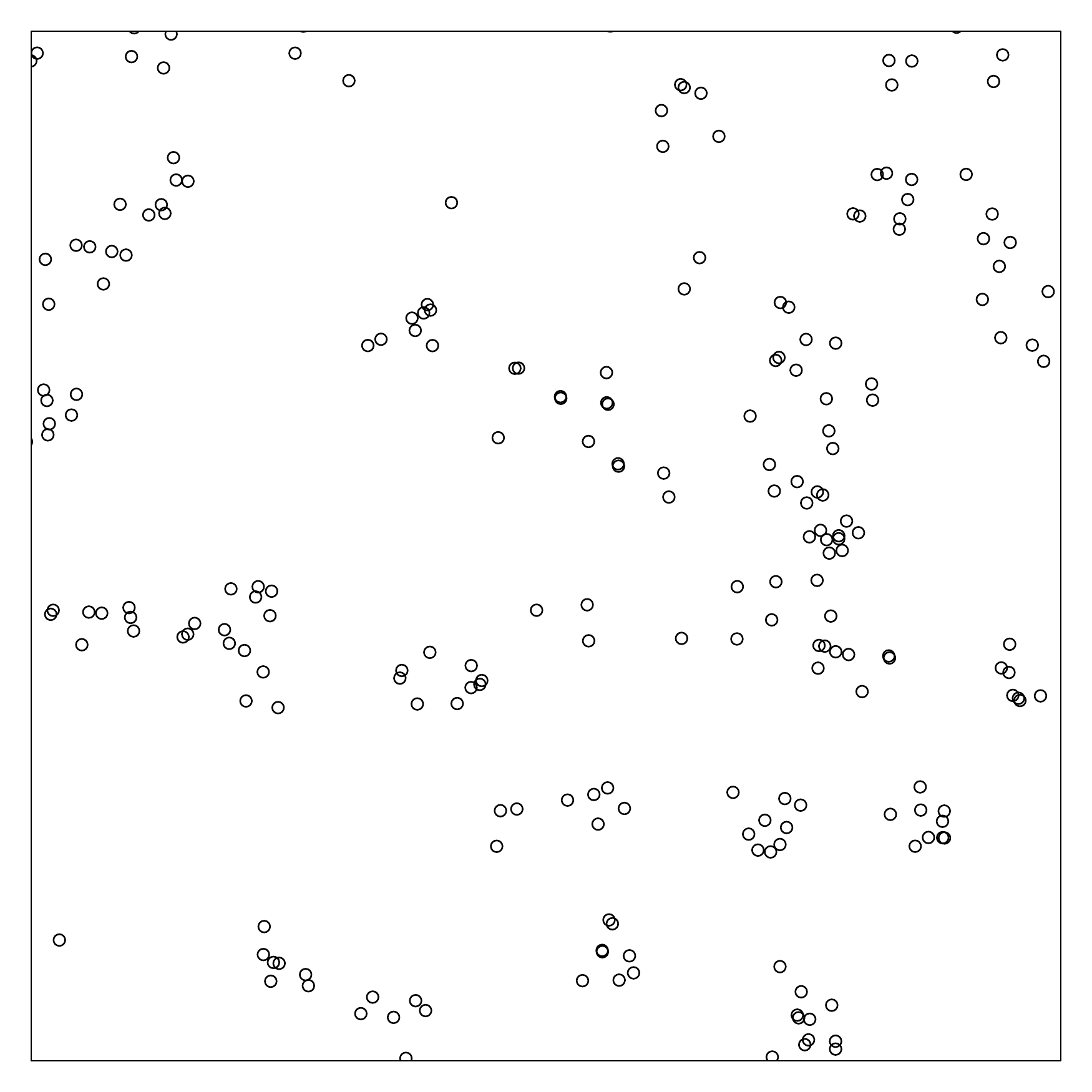}\\
\centerline{Cox point process}
\end{minipage}
\end{center}
\vspace{-2ex}
\caption{\label{f.Lattice1} From left to right : patterns sampled from a simple  i.i.d.  perturbed lattice, Poisson pp and  a doubly stochastic Poisson (Cox) pp, all with the same mean number of points per unit of  surface. For more details see Example~\ref{ex:perturbed.lattice} in  Section~\ref{sec:pl}.}
  \end{figure*}

Roughly speaking, a set of points in $\mR^d$ is ``spatially
homogeneous'' if approximately the same number of points occurs in any
circular region of a given area. A  set of points ``clusters'' if it lacks spatial homogeneity. There exist some statistical descriptors of spatial homogeneity or clustering of pp. We will show that $dcx$  ordering  is consistent with these statistical descriptors. We restrict
ourselves to the stationary setting.

One of the most popular functions for the statistical analysis of
spatial homogeneity is the {\em Ripley's $K$~function} defined for stationary pp (cf~\cite{Stoyan95}). Assume that $\Phi$ is a stationary pp on $\mR^d$ with finite intensity $\lambda=\sE(\Phi([0,1]^d))$. Then
$$K(r):=\frac1{\lambda\|B\|}\sE\Bigl(\sum_{X_i\in\Phi\cap B}
(\Phi(B_{X_i}(r))-1)\Bigr)\,,$$ where
$\|B\|$ denotes the Lebesgue measure of a bBs $B$.
Due to stationarity, the definition does not depend on the choice of $B$.
The following observation was made in~\cite{snorder}.
\begin{prop}\label{prop:Ripley}
Consider two stationary pp $\Phi_j$, $j=1,2$,  with the same finite intensity
and denote by $K_j(r)$ their Ripley's $K$~functions.
If $\Phi_1\leq_{dcx}\Phi_2$ then $K_1(\cdot)\le K_2(\cdot)$.
\end{prop}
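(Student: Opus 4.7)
The plan is to express Ripley's $K$-function in terms of the second factorial moment measure $\alpha^{(2)}$ of $\Phi$, and then to deduce the ordering of $\alpha^{(2)}$ from the $dcx$ hypothesis. First, for any bBs $B$ with $\|B\|>0$, I would rewrite
\[
\lam\|B\|K(r)
=\sE\!\left(\sum_{X_i\in\Phi\cap B}\ \sum_{X_j\in\Phi,\,X_j\neq X_i}\mathbf{1}[|X_j-X_i|\le r]\right)
=\alpha^{(2)}(D_r),
\]
where $D_r:=\{(x,y)\in(\mR^d)^2:x\in B,\ |y-x|\le r\}$ is a bounded Borel subset of $(\mR^d)^2$. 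By stationarity $K(r)$ does not depend on $B$, so it suffices to establish that $\alpha^{(2)}_1(D_r)\le\alpha^{(2)}_2(D_r)$.

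The core step is to establish the setwise bound $\alpha^{(2)}_1\le\alpha^{(2)}_2$ on bounded Borel subsets of $(\mR^d)^2$. For disjoint bBs $A,A'\subset\mR^d$ one has $\alpha^{(2)}(A\times A')=\sE[\Phi(A)\Phi(A')]$, and the map $(x,y)\mapsto xy$ on $[0,\infty)^2$ is $dcx$ (coordinate-wise affine, hence convex, and supermodular with mixed partial $1\ge 0$), so $\Phi_1\le_{dcx}\Phi_2$ yields $\alpha_1^{(2)}(A\times A')\le\alpha_2^{(2)}(A\times A')$. For the diagonal case $A=A'$ one has $\alpha^{(2)}(A\times A)=\sE[\Phi(A)^2]-\sE[\Phi(A)]$; since $x\mapsto x^2$ is convex (hence $dcx$ in one variable) and since $dcx$-ordering forces equality of mean measures $\sE[\Phi_1(A)]=\sE[\Phi_2(A)]$, the inequality persists. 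A general bounded Borel set $D\subset(\mR^d)^2$ is then approximated by finite disjoint unions of such product rectangles, obtained by cutting along partitions of a sufficiently large bBs into pieces of vanishing diameter, and the regularity of the Radon measures $\alpha_i^{(2)}$ transports the inequality to $D$. Applying this with $D=D_r$ gives $\lam\|B\|K_1(r)\le\lam\|B\|K_2(r)$ and the claim follows.

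The main technical point, and essentially the only non-algebraic step, is the last extension: promoting the inequality from the $\pi$-system of rectangles to all bounded Borel subsets of $(\mR^d)^2$. This amounts to showing that the signed Radon measure $\alpha_2^{(2)}-\alpha_1^{(2)}$ is nonnegative given that it is nonnegative on rectangles, which follows from the fact that finite disjoint unions of rectangles form a ring generating the Borel $\sg$-algebra together with inner regularity of Radon measures by compacts. Everything else in the argument is a direct consequence of the defining inequalities for $dcx$ applied to the very simple test functions $xy$ and $x^2$.
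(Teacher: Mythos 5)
Your proof is correct and takes essentially the same route as the paper, which for this statement simply cites the earlier reference and whose adjacent Proposition~\ref{Prop:pcf} is exactly the moment-measure ordering you re-derive: the $dcx$ test functions $xy$ and $x^2$ on disjoint, respectively equal, rectangles, the equality of mean measures forced by $dcx$, and the identity $\lam\|B\|K(r)=\alpha^{(2)}(D_r)$. One caution for the write-up: nonnegativity of $\alpha_2^{(2)}-\alpha_1^{(2)}$ on a generating ring is not by itself sufficient to conclude nonnegativity on all Borel sets; what actually closes the argument is that every open subset of $(\mR^d)^2$ is a countable disjoint union of product cells whose factors are equal or disjoint (e.g.\ dyadic cells), combined with countable additivity and regularity of the two Radon measures --- which is what your sketch implicitly uses and should be stated explicitly.
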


Another  useful characteristic for measuring clustering effect in point
processes is the {\em pair correlation function}. It is related to the probability of finding the center of a particle at a given distance from the center of another particle and can be defined on $\mR^{2d}$ as
$g(x,y):=\frac{\rho^{(2)}(x,y)}{\rho^{(1)}(x)\rho^{(1)}(y)},$
where $\rho^{(k)}$ is the {\em $k\,$th  joint intensity}; i.e.,
a function (if it exists) $\rho^{(k)} :(\mR^{d})^{k} \to [0,\infty)$ such that
for any $k$ mutually disjoint subsets
$B_1,\ldots,B_k$, we have that
$\sE{\prod_{i=1}^k \Phi(B_i)} = \int_{B_1\times\dots\times B_k}
\rho^{(k)}(x_1,\ldots,x_k) \md x_1 \ldots \md x_k$
(see
\cite{Stoyan95}).

\medskip
We now make a digression to
joint intensities, which will be used later on.
Recall that the measure  $\alpha^{k}(\cdot)$ defined
by $\alpha^{k}(B_1\times\ldots\times B_k)
=\sE{\prod_{i=1}^k \Phi(B_i)}$ for all (not
necessarily disjoint) bBs $B_i$  ($i=1,\ldots, k$)
is called {\em $k\,$th order moment
measure} of $\Phi$. For simple pp,
the truncation of the measure $\alpha^k(\cdot)$ to the subset
$\{(x_1,\ldots,x_k)\in (\mR^{d})^k: x_i\not=x_j,\; \text{for\;} i\not=j\}$
is equal to the {\em $k\,$th order factorial moment
measure} $\alpha^{(k)}(\cdot)$. Hence $\rho^{(k)}$ (if it exists) is the
density of $\alpha^{(k)}(\cdot)$ for simple pp. Consequently,  the
joint intensities characterize the distribution of a simple pp.
The above facts remain true even when the densities $\rho^k$
are considered with respect to  $\prod_{i=1}^k\mu(dx_k)$ for an arbitrary Radon
measure $\mu$ on $\mR^d$.  We will assume this generality
in Examples~\ref{ex.det} and~\ref{ex.perm}.

The following result was also proved
in~\cite{snorder};~\footnote{$\sigma$-finiteness condition is missing
  there; see~\cite[Prop.~4.2.4]{Yogesh_thesis} for the correction.}.

\begin{prop}
\label{Prop:pcf}
Let $\Phi_1, \Phi_2$ be two pp on $\mR^d$ with $\sigma$-finite
$k\,$th moment measures $\alpha_j^{k}$, $j=1,2$. If $\Phi_1
\leq_{idcx} \Phi_2$ then
 $\alpha_1^{k}(A)\le \alpha_2^{k}(A)$ for all bBs $A\subset (\mR^{d})^k$.
Consequently
$\rho^{(k)}_1(x_1,\ldots,\allowbreak x_k)
\leq  \rho^{(k)}_1(x_1,\ldots,\allowbreak x_k)$
(whenever they exist) for
Lebesgue a.e. $(x_1\ldots,x_k)$. Moreover,
if $\Phi_1 \leq_{dcx} \Phi_2$, then
$g_1(x,y) \leq g_2(x,y)$ for Lebesgue a.e. $(x,y)$.
\end{prop}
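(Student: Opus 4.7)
The strategy is to apply the $idcx$ (respectively, $dcx$) order to carefully chosen polynomial test functions so as to transfer the stochastic comparison to the moment measures and joint intensities, and then to extend from disjoint product rectangles to all bBs by a Carathéodory-type argument. Throughout, I would work with $idcx$ for the main measure inequality and pass to $dcx$ only at the end, when $k=2$ suffices.

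For the core inequality $\alpha_1^k(A) \leq \alpha_2^k(A)$, first take pairwise disjoint bBs $C_1,\ldots,C_j$ and non-negative integer powers $m_1,\ldots,m_j$, and apply the $idcx$ order assumption to the test function
\[
h(n_1,\ldots,n_j) \;=\; \prod_{\ell=1}^j \max(n_\ell,0)^{m_\ell}.
\]
This $h$ is non-negative, coordinatewise monotone, and coordinatewise convex by inspection; its supermodularity on $\mathbb{R}^j$ I would verify by fixing all but two coordinates, which reduces to the elementary fact that the product of two non-negative, non-decreasing functions of one variable is supermodular on $\mathbb{R}^2$. Hence $h$ is $idcx$, and evaluating under the $idcx$ comparison of $(\Phi_i(C_1),\ldots,\Phi_i(C_j))$ yields $\mathsf{E}\bigl[\prod_\ell \Phi_1(C_\ell)^{m_\ell}\bigr] \leq \mathsf{E}\bigl[\prod_\ell \Phi_2(C_\ell)^{m_\ell}\bigr]$. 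For general, possibly overlapping, bBs $B_1,\ldots,B_k$, I would pick a common disjoint refinement $\{C_\ell\}$, expand $\prod_i \Phi(B_i)$ as a finite sum of terms of the form $\prod_\ell \Phi(C_\ell)^{m_\ell}$, and sum term by term to conclude $\alpha_1^k(B_1\times\cdots\times B_k)\leq \alpha_2^k(B_1\times\cdots\times B_k)$. Finally, since product rectangles form a semi-ring generating the Borel $\sigma$-algebra of $(\mathbb{R}^d)^k$ and the set function $\alpha_2^k - \alpha_1^k$ is non-negative on this semi-ring, Carathéodory extension combined with the assumed $\sigma$-finiteness promotes the inequality to all bBs $A$.

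The consequences are short. Since $\rho^{(k)}_i$ is characterised by integrating against disjoint product rectangles, the measure inequality forces $\rho^{(k)}_1 \leq \rho^{(k)}_2$ Lebesgue-a.e. on $(\mathbb{R}^d)^k$ (the diagonal being Lebesgue-null). For the pair correlation, I would specialise to $k=2$ under the $dcx$ hypothesis: the function $h(n_1,n_2)=n_1 n_2$ is globally $dcx$ on $\mathbb{R}^2$ (coordinatewise linear, hence convex, and supermodular since $\partial^2 h/\partial n_1\partial n_2 \equiv 1$), so the same refinement/extension argument delivers $\rho_1^{(2)}\leq\rho_2^{(2)}$ a.e. Combined with the equality of mean measures implied by $dcx$ ordering of random measures, noted earlier in the excerpt, one has $\rho^{(1)}_1 = \rho^{(1)}_2$ a.e., and dividing yields $g_1 \leq g_2$ Lebesgue-a.e. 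The main obstacle is the global supermodularity of the truncated product $h$, since $idcx$ testing requires functions defined on all of $\mathbb{R}^j$ rather than only the positive orthant; the pairwise-slice reduction bypasses this cleanly. A secondary point is that for $k\geq 3$ the untruncated product $n_1\cdots n_k$ is no longer supermodular on $\mathbb{R}^k$, which explains the asymmetry of hypotheses in the statement: the $dcx$ hypothesis suffices only for the $k=2$ pair-correlation conclusion, while the moment-measure comparison genuinely requires the stronger $idcx$ assumption.
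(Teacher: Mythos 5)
The paper does not actually prove Proposition~\ref{Prop:pcf}: it is quoted from the reference \cite{snorder} (with a footnote pointing to the thesis for the $\sigma$-finiteness correction), so there is no in-text proof to compare against. Your argument is the natural one and, as far as I can tell, essentially the one used in that reference: the test functions $\prod_\ell \max(n_\ell,0)^{m_\ell}$ are indeed $idcx$ (your pairwise-slice verification of supermodularity is correct, since a non-negative constant times a product of two non-negative non-decreasing one-variable functions is supermodular), the expansion of $\prod_i\Phi(B_i)$ over a common disjoint refinement has non-negative integer coefficients, and the $dcx$ hypothesis enters at the end only to force \emph{equality} of the mean measures, which is what lets you divide and compare $g_1$ and $g_2$. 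So the core is sound.

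Two points need repair. First, the passage from rectangles to arbitrary bBs $A$ is the genuinely delicate step, and ``Carath\'eodory extension combined with $\sigma$-finiteness'' is not by itself a proof: domination of one $\sigma$-finite measure by another on a generating semi-ring does \emph{not} in general imply domination on the generated $\sigma$-algebra (take $\nu$ to be counting measure on the rationals and $\mu=\delta_{\sqrt2}$; then $\mu((a,b])\le\nu((a,b])$ vacuously for all nondegenerate intervals, yet $\mu(\{\sqrt2\})>\nu(\{\sqrt2\})$). What makes the argument work is that one can exhaust the space by rectangles $R$ with $\alpha_2^k(R)<\infty$; on each such $R$ the set function $\alpha_2^k-\alpha_1^k$ is a genuine premeasure on the trace semi-ring, extends to a non-negative measure, and uniqueness of extension for finite measures gives $\alpha_1^k\le\alpha_2^k$ on $\mathcal{B}|_R$. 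You should say this (or invoke outer regularity, which requires local finiteness), and you should also note that the order $\le_{idcx}$ is defined only over functions with \emph{finite} expectations, so the polynomial test functions must first be monotonically approximated by $idcx$ functions of linear growth (cf.\ the paper's footnote citing \cite[Theorem~3.12.7]{Muller02}) before letting both sides tend to $+\infty$ if need be. Second, your closing remark inverts the logic of the hypotheses: $idcx$ is the \emph{weaker} hypothesis (since $\Phi_1\le_{dcx}\Phi_2$ implies $\Phi_1\le_{idcx}\Phi_2$), and $dcx$ would of course suffice for the moment-measure comparison as well; the reason the last assertion upgrades to $dcx$ is not any failure of supermodularity for $k\ge3$ (your truncated products handle that), but solely that $idcx$ only yields $\rho^{(1)}_1\le\rho^{(1)}_2$, whereas comparing $g_i=\rho^{(2)}_i/(\rho^{(1)}_i\rho^{(1)}_i)$ requires the equality $\rho^{(1)}_1=\rho^{(1)}_2$ that only $dcx$ provides.
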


Recall that for a (stationary) Poisson pp $\Phi_\lambda$ of
intensity $\lambda$ with $0<\lambda<\infty$, we have that $K(r)\equiv1$ and
$\rho(x,y)\equiv 1$. Poisson pp  is often considered as a
reference model of spatial homogeneity to which other pp
are compared. In Sections \ref{ss.sub-suuper-Poisson}
and \ref{sec:ex}, we will consider pp smaller and larger than $\Phi_\lambda$ in $dcx$ ordering.

\subsection{Void probabilities}
One of the fundamental observations to our approach
is that $dcx$ ordering allows us to compare the void
probabilities of pp $\nu(B)=\pr{\Phi\cap B=\emptyset}$ for all bBs $B$.
\begin{prop}
\label{Prop:voids_pp}
Denote by $\nu_1(\cdot),\nu_2(\cdot)$ the void probabilities of pp
$\Phi_1$ and $\Phi_2$ on $\mR^d$ respectively.
If  $\Phi_1 \leq_{ddcx} \Phi_2$ then
$\nu_1(B) \le \nu_2(B)$
for all bBs $B\subset\mR^d$.
\end{prop}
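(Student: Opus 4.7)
The plan is to express the void probability as the expectation of a simple $ddcx$ function of the (scalar) count $\Phi(B)$ and then invoke the hypothesis directly.

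First, I would reduce to a one-dimensional comparison. The relation $\Phi_1 \leq_{ddcx} \Phi_2$ between random measures means that $(\Phi_1(B_1),\ldots,\Phi_1(B_k)) \leq_{ddcx} (\Phi_2(B_1),\ldots,\Phi_2(B_k))$ for every finite family of bBs, so in particular, specializing to $k=1$, we obtain $\Phi_1(B) \leq_{ddcx} \Phi_2(B)$ as nonnegative-integer-valued random variables. In dimension one the notion of directional convexity collapses to ordinary convexity, so $ddcx$ on $\mR$ is simply ``decreasing and convex.''

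Next I would produce the test function. Set
\[
f(x) := \max(1-x,0), \qquad x\in\mR.
\]
Then $f$ is Lebesgue-measurable, nonincreasing, and convex on $\mR$, hence $ddcx$. Moreover, on the nonnegative integers $f(0)=1$ and $f(n)=0$ for every $n\geq 1$, so $f(\Phi(B)) = \1{\Phi(B)=0}$ almost surely for any pp $\Phi$. Therefore
\[
\nu_j(B) = \pr{\Phi_j(B)=0} = \EXP{f(\Phi_j(B))}, \qquad j=1,2.
\]
Both expectations are automatically finite because $0\le f\le 1$, so the integrability caveat in the definition of $\leq_{ddcx}$ is trivially satisfied.

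Finally, I would apply the hypothesis to $f$: since $\Phi_1(B)\leq_{ddcx}\Phi_2(B)$,
\[
\nu_1(B) = \EXP{f(\Phi_1(B))} \;\le\; \EXP{f(\Phi_2(B))} = \nu_2(B),
\]
which is the desired conclusion. I do not anticipate any real obstacle; the only point that requires a moment of care is checking that the indicator of $\{n=0\}$ on $\mN$ extends to a bona fide decreasing convex function on all of $\mR$, which the choice $f(x)=(1-x)_+$ accomplishes while keeping the function bounded (avoiding any integrability issues in the $ddcx$ comparison).
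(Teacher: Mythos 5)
Your proof is correct and is essentially the paper's own argument: both express the void probability as $\nu_j(B)=\EXP{f(\Phi_j(B))}$ with the test function $f(x)=\max(0,1-x)$, which is decreasing and convex (hence $ddcx$ in one dimension), and then apply the ordering hypothesis. Your write-up just spells out the reduction to $k=1$ and the boundedness of $f$ more explicitly than the paper does.
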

\begin{proof}
This follows  directly from the definition of $dcx$
ordering of pp, expressing  $\nu_j(B)=\sE(f(\Phi_j(B)))$, $j=1,2$, with the
function $f(x)=\max(0,1-x)$ that is decreasing and convex (so
$ddcx$ in one dimension).
\end{proof}

The latter result can be interpreted as follows:
for two pp that are $ddcx$ ordered, the smaller one has less chance to create a particular hole
(absence of points in a given region). In the case of $dcx$ ordered pp
this holds true even if both pp have the same
expected number of points in any given set. The above result
can be strengthened to the comparison of void probabilities  of Boolean models
having their germ pp $dcx$ ordered. This observation will be the  starting point for our investigation of the connections between percolation and
directionally convex ordering of pp in Section~\ref{s.Boolean}.

\begin{prop}
\label{prop:ord_cap_fnl}
Let  $C(\Phi_j,G)$, $j=1,2$ be two Boolean models with pp of germs
$\Phi_j$, $j=1,2$, respectively, and common distribution of the typical
grain $G$. Assume that $\Phi_i$ are simple and have locally finite moment
measures. If $\Phi_1\le_{dcx}\Phi_2$ then
$\pr{C(\Phi_1,G) \cap B=\emptyset}\le\pr{C(\Phi_2,G) \cap
  B=\emptyset}$
for all bBs $B\subset\mR^d$.
Moreover, if $G$ is a fixed compact grain then the same result holds,
provided $\nu_1(B)\le\nu_2(B)$  for all bBs $B\subset\mR^d$, where $\nu_i$ is
the void probability of $\Phi_i$.
\end{prop}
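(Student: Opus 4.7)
I will handle the fixed-grain case directly and reduce the random-grain case to a monotonicity statement for the Laplace functional of the germ process at nonnegative test functions, which itself will follow from a convex ordering of shot-noise integrals. Specifically, when $G = K$ a.s. with $K$ compact, the event $\{C(\Phi,K) \cap B = \emptyset\}$ coincides with $\{\Phi(B \oplus \check{K}) = 0\}$, where $\check{K} = \{-x : x \in K\}$; since $B \oplus \check{K}$ is itself bBs, the second assertion follows at once from the hypothesis $\nu_1(B \oplus \check{K}) \le \nu_2(B \oplus \check{K})$, and the first assertion in this special case then follows via Proposition~\ref{Prop:voids_pp}, which gives $\nu_1 \le \nu_2$ from $\Phi_1 \le_{dcx} \Phi_2$.

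For the general random grain, I would condition on $\Phi$ and average over the i.i.d.\ grains $\{G_i\}$ (independent of $\Phi$) to obtain
\[
\pr{C(\Phi,G) \cap B = \emptyset} = \sE\!\left[\prod_{X_i \in \Phi}\bigl(1-p(X_i)\bigr)\right] = \sE\!\left[\exp\!\left(-\int u(x)\,\Phi(\md x)\right)\right],
\]
with $p(x) = \pr{(x+G)\cap B \neq \emptyset}$ and $u(x) = -\log(1-p(x)) \ge 0$. Local finiteness of the Boolean model forces $\int u\,d\sE\Phi < \infty$, so the shot-noise integral is a.s.\ finite, and the claim reduces to the Laplace-functional comparison $L_{\Phi_1}(u) \le L_{\Phi_2}(u)$ for this specific $u \ge 0$.

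The key lemma is: $\Phi_1 \le_{dcx} \Phi_2$ implies $\int u\,d\Phi_1 \le_{cx} \int u\,d\Phi_2$ for every nonnegative measurable $u$ with finite integral against the common mean measure. For simple $u = \sum_{i=1}^{k} a_i \1_{B_i}$ with $a_i \ge 0$ and mutually disjoint bBs $B_i$, and any convex $\phi$, the function $(x_1,\ldots,x_k)\mapsto \phi(\sum_i a_i x_i)$ is $dcx$: it is coordinate-wise convex, and its mixed second-order partials $a_i a_j \phi''$ are nonnegative, so it is supermodular. Thus $\Phi_1 \le_{dcx} \Phi_2$ gives $\sE\phi(\sum a_i \Phi_1(B_i)) \le \sE\phi(\sum a_i \Phi_2(B_i))$. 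For general $u \ge 0$ I would approximate from below by such simple functions, invoke monotone convergence on the inside and dominated convergence on the outside, and finally take the convex test $\phi(x) = e^{-x}$ to conclude $L_{\Phi_1}(u) \le L_{\Phi_2}(u)$.

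The main obstacle is this last approximation step: one must verify that the intermediate $cx$-inequality for the shot-noise integrals survives the monotone limit, for which the Lipschitz-growth approximation of $dcx$ test functions recalled in footnote~\ref{fn:f-Ox} is essential. A conceptually cleaner alternative would be to independently mark $\Phi_j$ with i.i.d.\ copies of $G$ (which preserves $dcx$ order, as shown in the companion work on $dcx$ ordering of point processes), and to observe that $\{C(\Phi_j,G)\cap B = \emptyset\}$ then becomes the void event of the marked process at the fixed set $\{(x,K):(x+K)\cap B \neq \emptyset\}$, so that the marked analogue of Proposition~\ref{Prop:voids_pp} immediately concludes.
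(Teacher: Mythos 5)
Your argument is correct in substance, but for the random-grain case it takes a genuinely different route from the paper. The paper lifts the hypothesis $\Phi_1\le_{dcx}\Phi_2$ to the independently marked processes $\tilde\Phi_i=\{(X_j^i,G_j^i)\}$ (citing the marking-preservation result of the companion work) and then invokes the comparison theorem for \emph{extremal} shot-noise fields, writing $\pr{C(\Phi_i,G)\cap B=\emptyset}=\pr{S_i(B)\le 0}$ with $S_i(B)=\sup_j \1[(X^i_j+G^i_j)\cap B\ne\emptyset]$; this is short but modular, resting on two external results. You instead integrate out the grains first, reducing the claim to a Laplace-functional inequality $\sE\exp(-\int u\,d\Phi_1)\le\sE\exp(-\int u\,d\Phi_2)$ for a single nonnegative $u$, and prove that by the elementary observation that $(x_1,\ldots,x_k)\mapsto\phi(\sum_i a_ix_i)$ is $dcx$ for convex $\phi$ and $a_i\ge0$. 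This is self-contained, and since the relevant test function $e^{-\sum a_ix_i}$ is decreasing as well as $dcx$, your argument in fact yields the conclusion under the formally weaker hypothesis $\Phi_1\le_{ddcx}\Phi_2$ --- a small strengthening the paper does not state. The fixed-grain reduction via $B\oplus\check K$ and your closing ``cleaner alternative'' coincide with what the paper actually does.

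One step needs repair: the claim that local finiteness of the Boolean model forces $\int u\,d\sE\Phi<\infty$ is false. Local finiteness gives $\int p(x)\,\sE\Phi(dx)<\infty$ with $p(x)=\pr{(x+G)\cap B\ne\emptyset}$, but $u=-\log(1-p)$ blows up where $p\to1$ and equals $+\infty$ wherever $p(x)=1$ (e.g.\ for $x\in B$ when $O\in G$ a.s.), so $\int u\,d\sE\Phi$ may be infinite and your key lemma, as you state it, does not apply directly. The fix is routine and worth writing down: the identity $\pr{C(\Phi,G)\cap B=\emptyset}=\sE\exp(-\int u\,d\Phi)$ remains valid with the convention $e^{-\infty}=0$, and one proves the Laplace-functional inequality first for the truncations $u_M=\min(u,M)$ (or directly for the increasing sequence of bounded simple functions $u_n\uparrow u$), then lets $M\to\infty$: the integrands $\exp(-\int u_M\,d\Phi_i)$ decrease monotonically and are bounded by $1$, so the inequality passes to the limit by dominated convergence without any integrability of $u$. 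With that adjustment, and noting that the approximating simple functions can be taken with mutually disjoint bounded supports (which is all the $dcx$ hypothesis is ever applied to), the proof is complete.
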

\begin{proof}
For $x\in\mR^d$, $F$ racs and $B\in\texttt{B}_b^d$, define a
function  $h((x,F),B) = \1[B\cap (x + F)\not=\emptyset]$ and
an {\em extremal
shot-noise} field on $\texttt{B}_b^d$ defined by
$S_i(B):=\allowbreak\sup_{(X^i_j,G^i_j)\in\tilde\Phi_i}h((X^i_j,G^i_j),B)$,
where
$\tilde\Phi_i=\{(X_j^i,G_j^i)\}$ are the pp of germs {\em
independently  marked} by the grains defining the corresponding Boolean
models. By the local finiteness of the two Boolean models,
$\pr{C(\Phi_i,G)\cap B=\emptyset}=\pr{S_i(B)\le0}$
for all bBs $B\subset\mR^d$.
By~\cite[Proposition~3.2, point
2]{snorder}, $\Phi_1\le_{dcx}\Phi_2$ implies
$\tilde\Phi_1\le_{dcx}\tilde\Phi_2$
(\footnote{The assumption on local-finiteness of mean measures of
  $\Phi_i$ is missing in the statement of this result
  in~\cite{snorder}. Also the $idcx/ddcx/idcv$ ordering is mentioned there
  but not proved.}).
Moreover, by Proposition~4.1 in the aforementioned article,
$\pr{S_1(B)\le 0}\allowbreak\le\pr{S_2(B)\le 0}$. This completes the
proof of the first statement.

The proof of the result for the Boolean model with
a fixed compact grain follows immediately from
the equivalence:  $(x+G)\cap B=\emptyset$ if and only if
$x\not\in B+\check G$, where $\check G=\{-y:y\in G\}$.
\end{proof}

\subsection{Sub- and super-Poisson point processes}
\label{ss.sub-suuper-Poisson}
We now concentrate on comparison of pp to the Poisson pp of same
mean measure. To this end, we will call a pp
{\em$dcx$ sub-Poisson} (respectively {\em
 $dcx$  super-Poisson}) if it is smaller (larger) in $dcx$ order than the
Poisson pp (necessarily of the same mean measure).
For simplicity, we will just refer to them as sub-Poisson or super-Poisson pp
omitting the word $dcx$. Examples of such pp
are given in Section~\ref{sec:ex}. In particular,
a rich class of such pp called the perturbed lattice pp will be provided later. Also it was observed in~\cite{snorder} that Poisson-Poisson cluster pp, L\'{e}vy based Cox pp, Ising-Poisson cluster pp etc... are super-Poisson pp.

We will also consider some weaker notions of sub-  or
super-Poisson pp, for which only moment measures or void
probabilities can be compared.
Recall from Propositions~\ref{Prop:pcf} and~\ref{Prop:voids_pp}
that $dcx$ order implies ordering of factorial moment measures and void probabilities
$\nu(\cdot)$. Recall also that a Poisson pp
can be characterized as having void probabilities of the form
$\nu(B)=\exp(-\alpha(B))$. Bearing this in mind, we say that a pp $\Phi$
is {\em weakly sub-Poisson in the sense of void probabilities}
({$\nu$-weakly sub-Poisson} for short) if
\begin{equation}\label{e.nu-weakly-sub-Poisson} \pr{\Phi(B)=0}\le
e^{-\EXP{\Phi(B)}}
\end{equation}
for all Borel sets $B \subset \mR^d$.  When the inequality
in~(\ref{e.nu-weakly-sub-Poisson}) is reversed, we will say that $\Phi$
is {\em weakly super-Poisson in the sense of void probabilities}
({$\nu$-weakly super-Poisson}).

In our study of the percolation
properties of pp, it will be enough to be able to compare moment
measures (off diagonals) of pp. In this regard, we say that a pp
$\Phi$ is {\em weakly sub-Poisson in the sense of moment measures}
({$\alpha$-weakly sub-Poisson} for short) if
\begin{equation}\label{e.alpha-weakly-sub-Poisson}
\EXP{\prod_{i=1}^k\Phi(B_i)}\le \prod_{i=1}^k\EXP{\Phi(B_i)}\,
\end{equation} for all mutually disjoint bBs $B_i\subset\mR^d$.  When
the inequality in~(\ref{e.alpha-weakly-sub-Poisson}) is reversed, we
will say that $\Phi$ is {\em weakly super-Poisson in the sense of
moment measures} ({\em $\alpha$-weakly super-Poisson}).

Finally, we will say that $\Phi$ is {\em weakly sub-Poisson}
if $\Phi$ is $\alpha$-weakly sub-Poisson and $\nu$-weakly sub-Poisson.
Similarly, we define {\em weakly super-Poisson} pp.

\subsection{Simple Examples}
\label{ss:sim_ex}

We give here some examples of weakly super-Poisson pp that follow immediately from earlier results. More detailed examples of sub- and super-Poisson pp shall be presented in Section \ref{sec:ex}.

It can be seen that any {\em associated}
pp~\footnote{i.e, pp $\Phi$ satisfying
\begin{equation}
\label{eqn:association}
\COV{f(\Phi(B_1),\ldots,\Phi(B_k))g(\Phi(B_1),\ldots,\Phi(B_k))} \geq 0
\end{equation}
for any finite collection of bBs $B_1,\ldots,B_k\subset\mR^d$ and
$f,g$ continuous and increasing functions taking values in $[0,1]$; ~\cite{BurtonWaymire1985}.
This property is also called positive correlations, or the FKG property.}
is $\alpha$-weakly super-Poisson. In fact, it is easy to see using
the Jensen's inequality that the associated pp have moment measures
{\em everywhere} (not only off the diagonals) larger than those of
the corresponding Poisson pp~\footnote{Indeed, due to the symmetry of the moment
measures $\alpha^k(\cdot)$, it is enough to verify the inequality on
rectangles $A=B_1^{k_1}\times\dots\times B_n^{k_n}$ for any mutually
disjoint bBs $B_i\subset\mR^d$, any $k_1+\ldots+k_n=k$, $k_i\in\mZ$,
$k,k_i\ge 1$ with $\sum_{i=1}^nk_i=k$.  Recall that for the Poisson pp
of mean measure $\alpha(\cdot),$ we have
$\alpha^{k}(A)= 
\Bigl(\sum_{j=1}^{k_i}{k_i\choose
j}\Bigl(\alpha(B_{k_i})\Bigr)^{j}\Bigr)
$.}.
From~\cite[Th.~5.2]{BurtonWaymire1985}, we know that any {\em Poisson
center cluster pp} is associated.  This is a generalization of our
perturbation~(\ref{defn:perturbed_pp}) of a Poisson pp $\Phi$
(cf. Example~\ref{ex.perturbation_of_Poisson}) having form
$\Phi^{cluster}=\sum_{X\in\Phi}\{X+\Phi_X\}$ with $\Phi_X$ being
arbitrary i.i.d. (cluster) point measures.  Other examples of
associated pp given in~\cite{BurtonWaymire1985} are Cox pp with intensity measures being associated.

It is easy to see by the Jensen's inequality that {\em all Cox pp are $\nu$-weakly super-Poisson} and hence Cox pp with associated intensity measures are weakly super-Poisson.

In Section~\ref{sec:det.perm}, we will show that determinantal and permanental
pp are weakly sub-Poisson and weakly super-Poisson respectively.
Moreover, their  $dcx$ comparison to Poisson pp is possible on mutually
disjoint, {\em simultaneously observable} sets.

\section{Percolation of Boolean models and the $dcx$ ordering of the  underlying point processes} 
\label{s.Boolean}

By {\em percolation} of a Boolean model $C=C(\Phi,G)$
(see Section~\ref{sss.Boolean_model}),
we refer to the existence of an topologically
unbounded connected subset (giant component) of $C$.
We have given heuristic arguments in the Introduction as to
why more clustering in the pp $\Phi$ should make percolation of the corresponding Boolean model $C(\Phi,G)$ less likely. 
Since we have shown in the previous section that $dcx$ order is suitable for
comparing the clustering tendencies of pp, it is tempting to conjecture
that pp smaller in $dcx$ order percolate better. 
Some numerical evidences supporting
this conjecture for a certain class of pp,  called perturbed lattice pp,
will be presented in Section~\ref{ss.NumerialPercol}.
However, as we have also  mentioned in the Introduction, 
this conjecture is not true in full generality; a counterexample
will be presented in Section~\ref{ss.super-percol}.
Nevertheless, some general comparison results regarding nonstandard critical radii for percolation and $dcx$ order of pp shall be proved in this section. 

Define  a {\em critical  radius} for $C(\Phi,G)$ as
\begin{equation}\label{e:rc}
r_c=r_c(\Phi,G):=
\inf \left\{r>0 : \pr{C(\Phi,G^{(r)})\;\text{percolates}} >0 \right\}\,;
\end{equation}
i.e., the smallest $r$ for which
$C(\Phi,G^{(r)})$  percolates with a positive probability,
where $G^{(r)} := \bigcup_{x \in G}B_x(r),$  is the
$r$-th parallel set of $G$.

\smallskip
We say that the \emph{origin percolates} in $C(\Phi,G)$ if $O$
belongs to an infinite component of $C(\Phi,G)$. If $\Phi$ is stationary
then it is easy to show that for any $r>0$,
$C(\Phi,G^{(r)})$ percolates with a positive probability if
and only if the origin percolates with a positive probability. Further, ergodic pp percolate with probability that is either zero or one. 

The critical radius $r_c$ is similar to the critical radius defined
for percolation in various continuum percolation models.
For example, taking  $G=\{O\}$ we obtain the critical radius $r$ for
the percolation of the classical continuum percolation model
(the Boolean model with spherical grains $G=B_O(r)$).

In what follows, we will define two other
critical radii for the percolation of $C(\Phi,G)$. They will
be an upper and a lower bound of $r_c$ respectively.
We will show that the ordering of void probabilities
and moment measures of the pp germs $\Phi$ allows comparison of these
two new critical radii. In this regard, we introduce some more notations.

\subsection{Auxiliary discrete models}
\label{ss.AuxModels}
Though we focus on the percolation of Boolean models (continuum percolation models),
but as is the wont in the subject we shall extensively use discrete
percolation models as approximations.
For $r > 0, x \in \mR^d$, define the following subsets of $\mR^d$. Let
$Q^r := (-\frac{1}{2r},\frac{1}{2r}]^d$,  $Q^r(x) := x + Q^r$,
$Q_r :=  (-r,r]^d$ and $Q_r(x) := x + Q_r$.
We will consider the following  three families of discrete graphs
parametrized by  $n \in \mN$.
\begin{itemize}
\item
$\mL^{*d}_n=(\mZ^d_n, \mE^{*d}_n)$ is the usual  {\em close-packed lattice graph} scaled down by
the factor $1/n$. It has $\mZ^d_n  =\frac{1}{n}\mZ^d$, where $\mZ$ is
the set of integers, as the set of
vertices  and the set of edges $\mE^{*d}_n := \{ \langle z_i,z_j \rangle \in
(\mZ^d_n)^2 :  Q^{\frac{n}{2}}(z_i) \cap Q^{\frac{n}{2}}(z_j) \neq \emptyset
\}$.
\item $\mL^{*d}(r) = (r\mZ^d,\mE^{*d}(r))$ is a similar
close-packed graph on the scaled-up lattice $r\mZ^d$; the edge-set is
$\mE^{*d}(r) := \{ \langle z_i,z_j \rangle \in (r\mZ^d)^2 : Q_r(z_i) \cap
Q_r(z_j) \neq \emptyset \}$.

\item Finally, for a given compact set $K \subset \mR^d$,
the graph $\mL^{*d}_n(K)=(\mZ^d_n,\mE^{*d}_n(K))$ has vertex-set $\mZ^d_n $
and edge-set $\mE^{*d}_n(K) := \{\langle z_i,z_j \rangle : \{Q^n(z_i) + K\}
\cap \{Q^n(z_j) + K\} \neq \emptyset \}$,
where we recall that $A + B = \{a + b : a\in A, b\in B\}$.
\end{itemize}

A path in $\mL^{*d}_n(K)$ from the origin $O\in\mR^d$ to $\partial
Q_m$ is $\gamma = \langle z_1,\ldots,z_{k+1}\rangle \subset \mZ^d_n$
such that for $1 \leq i \leq k$, $z_i \in Q_m$, $\langle z_i,z_{i+1}
\rangle \in \mE^d_n(K)$
and  $O\in \{Q^n(z_1) + K\}$, $\{Q^n(z_{k+1}) +
K\} \cap \partial Q_m \neq \emptyset$. Let
$\Upsilon^n_m(K)$ be the set of all such paths from the origin to
$\partial Q_m$ in $\mL^{*d}_n(K)$.

A {\em contour} in $\mL^{*d}_n$ is a minimal collection of vertices such
that any infinite path in $\mL^{*d}_n$
from the origin has to contain one of these
vertices (the minimality condition implies that the removal of any vertex
from the collection will lead to existence of an infinite path from
the origin without any intersection with the remaining vertices in the
collection). Let $\Gamma_n$ be the set of all contours around the origin in
$\mL^{*d}_n$. For any subset of points $\gamma\subset\mR^d$, in particular for paths
$\gamma \in \Upsilon^n_m(K),\, \Gamma_n$, we define  $Q_{\gamma} = \bigcup_{z \in \gamma} Q^n(z)$.

Throughout the article, we will define several auxiliary site percolation models
on the above graphs by randomly declaring some of their vertices (called also
sites) open.  As usual,
we will say that a given discrete
site percolation model percolates if the corresponding sub-graph consisting of all open sites contains an infinite component. In particular, note that
the existence of only a finite number of
 paths in $ \Gamma_n$ (i.e., contours around the origin in
$\mL^{*d}_n$)  implies
the percolation of the corresponding site percolation model on $\mL^{*d}_n$.
If this number of paths in $\Gamma_n$  has a finite mean
(which is thus a sufficient condition for the percolation)
then we will say that the site percolation model  on $\mL^{*d}_n$
{\em percolates through the Peierls argument} (cf.~\cite[ pp.~17--18]{Grimmett99}).

\subsection{Ordering of void probabilities and percolation}
\label{ss.void-percolation}
Define a new critical radius
\begin{equation}\label{e:urc}
\ovr_c=\ovr_c(\Phi,G):= \inf \Bigl\{r>0 : \text{for all}\;  n\ge1, \sum_{\gamma \in
\Gamma_n}\pr{C(\Phi,G^{(r)}) \cap Q_{\gamma} = \emptyset} <
  \infty\Bigr\}\,.
\end{equation}
It might be seen as the critical radius corresponding
to the phase transition when the discrete
model $\mL^{*d}_n=(\mZ^d_n, \mE^{*d}_n)$,  approximating
$C(\Phi,G^{(r)})$ with an arbitrary precision, starts percolating through
the Peierls argument.
As a consequence, $\ovr_c(\Xi)$ is an upper bound for the actual
critical radii, as we show now.
\begin{lem}
\label{lem:ovcrit_rad_ub}
Let  $C(\Phi,G)$, be a Boolean model with the pp of germs
$\Phi$ and $G$ as the distribution of the typical
grain. Then, $\ovr_c(\Phi,G) \geq r_c(\Phi,G)$.
\end{lem}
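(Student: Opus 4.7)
The plan is to prove the contrapositive: for every $r > \ovr_c(\Phi,G)$ the Boolean model $C(\Phi,G^{(r)})$ percolates with positive probability, which yields $r_c(\Phi,G) \le r$ for every such $r$ and hence $r_c \le \ovr_c$. I would carry this out by a Peierls-style argument on a discrete approximation of $C(\Phi,G^{(r)})$ built from one of the close-packed lattices $\mL^{*d}_n$ introduced in Section~\ref{ss.AuxModels}, using the summability definition of $\ovr_c$ in place of the usual exponential bound.

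Fix $r > \ovr_c$, pick an intermediate $r'\in(\ovr_c,r)$, and choose $n\in\mN$ large enough that the Euclidean diameter of the union of any two adjacent cells of $\mL^{*d}_n$ is strictly smaller than $r-r'$ (any $n > 2\sqrt{d}/(r-r')$ works). Declare a site $z\in\mZ^d_n$ \emph{vacant} if $Q^n(z)\cap C(\Phi,G^{(r')}) = \emptyset$ and \emph{occupied} otherwise; then the event that a contour $\gamma\in\Gamma_n$ is entirely vacant is exactly $\{C(\Phi,G^{(r')})\cap Q_\gamma = \emptyset\}$. Since $r' > \ovr_c$, the definition of $\ovr_c$ gives $\sum_{\gamma\in\Gamma_n}\pr{C(\Phi,G^{(r')})\cap Q_\gamma = \emptyset} < \infty$. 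The first Borel--Cantelli lemma (applied to the countable family of events indexed by $\gamma\in\Gamma_n$) then yields that, almost surely, only finitely many contours in $\Gamma_n$ are vacant, which is precisely the condition for the corresponding site percolation on $\mL^{*d}_n$ to \emph{percolate through the Peierls argument}; the topological fact the authors record immediately after the definition of $\Gamma_n$ gives an infinite cluster of occupied sites, almost surely.

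It remains to lift this discrete infinite cluster to the continuum. For any pair of adjacent occupied sites $z,z'$ on the infinite occupied cluster, choose points $p_z\in Q^n(z)\cap C(\Phi,G^{(r')})$ and $p_{z'}\in Q^n(z')\cap C(\Phi,G^{(r')})$. By the choice of $n$, $|p_z - p_{z'}| < r - r'$. Writing $G^{(r)} = G^{(r')} \oplus B_O(r-r')$, the grain of $C(\Phi,G^{(r)})$ that covers $p_z$ also contains the ball $B_{p_z}(r-r')$ and therefore contains $p_{z'}$; in particular it meets the grain containing $p_{z'}$, so the two grains lie in a common connected component of $C(\Phi,G^{(r)})$. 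Chaining these connections along the infinite occupied cluster produces an infinite connected subset of $C(\Phi,G^{(r)})$, which is continuum percolation.

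The one place I expect genuine care is the Peierls step: turning `almost surely only finitely many vacant contours around the origin in $\mL^{*d}_n$' into `almost surely an infinite cluster of occupied sites exists' relies on the classical close-packed/planar topological duality invoked by the authors when they introduce `percolating through the Peierls argument', and it is the step that must be handled carefully in arbitrary dimension $d$; once this is in hand, the Borel--Cantelli input and the Minkowski-sum connectivity argument are routine.
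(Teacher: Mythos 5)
Your proposal is correct and follows essentially the same route as the paper's proof: discretize on $\mL^{*d}_n$, declare a cell occupied when it meets the Boolean model, use the summability in the definition of $\ovr_c$ to run the Peierls argument, and lift the infinite occupied cluster to the continuum by a small enlargement of the radius. The only (immaterial) difference is that you shrink to an intermediate $r'<r$ and fix one large $n$, whereas the paper works at radius $r$ for every $n$ and concludes $r+\sqrt{d}/n\ge r_c$ before letting $n\to\infty$.
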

\begin{proof}
The proof as indicated above shall use Peierls argument on a suitable
discrete approximation of the model on $\mL^{*d}_n$.
Define a random field $\cX_r := \{X_r(z)\}_{z \in \mZ^d_n}$ such that
$X_r(z) = \1[C(\Phi,G^{(r)}) \cap Q^n(z) \neq \emptyset]$. By Peierls
argument for $r > \ovr_c$, for all $n$, $\cX_r$ percolates in $\mL^{*d}_n$ a.s.. Thus for all $n$, $C(\Phi,G^{(r+\frac{\sqrt{d}}{n})})$ percolates
a.s.
and so $r+\frac{\sqrt{d}}{n} > r_c$. Hence, $r \geq
r_c$.
\end{proof}
Here is the main observation of this section.
It follows directly from the definition of $\ovr_c$ and Proposition~\ref{prop:ord_cap_fnl}.

\begin{cor}
\label{cor:ord_crit_rad_racs}
Let  $C_j=C(\Phi_j,G)$, $j=1,2$ be two Boolean models with pp of germs
$\Phi_j$, $j=1,2$, respectively, and $G$ as the common distribution of the typical grain. Assume that $\Phi_i$ are simple and have locally finite moment
measures. If $\Phi_1\le_{dcx}\Phi_2$ then $\ovr_c(\Phi_1,G)\le\ovr_c(\Phi_2,G)$.
Moreover, if $G$ is a fixed compact grain then the same result holds
provided $\nu_1(B)\le\nu_2(B)$  for all bBs $B\subset\mR^d$, where $\nu_i$ is
the void probability of $\Phi_i$.
\end{cor}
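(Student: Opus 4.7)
The plan is to derive the result directly from the definition of $\ovr_c$ given in~\eqref{e:urc}, applying the void-probability comparison established in Proposition~\ref{prop:ord_cap_fnl} term-by-term in the series. No genuinely new estimate is needed: the work has already been done in Proposition~\ref{prop:ord_cap_fnl}, and what remains is only a careful unpacking of definitions.

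First, I would fix an arbitrary $r > \ovr_c(\Phi_2,G)$ and aim to show that $r \ge \ovr_c(\Phi_1,G)$, which will yield the claimed inequality by taking the infimum. Observe that the dilated grain $G^{(r)}$ is a.s.\ compact (being the $r$-parallel set of the a.s.\ bounded $G$), so the Boolean models $C(\Phi_i,G^{(r)})$ inherit local finiteness and the hypotheses of Proposition~\ref{prop:ord_cap_fnl} are in force. For any $n \ge 1$ and any contour $\gamma \in \Gamma_n$ around the origin in $\mL^{*d}_n$, the set $Q_\gamma = \bigcup_{z\in\gamma} Q^n(z)$ is a finite union of bounded cubes (since a contour is by definition a finite minimal cut-set in the locally finite graph $\mL^{*d}_n$), hence a bBs in $\mR^d$.

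Second, I would apply Proposition~\ref{prop:ord_cap_fnl} with $B = Q_\gamma$ and typical grain $G^{(r)}$, obtaining
\[
\pr{C(\Phi_1,G^{(r)}) \cap Q_\gamma = \emptyset}
\;\le\; \pr{C(\Phi_2,G^{(r)}) \cap Q_\gamma = \emptyset}
\qquad \text{for every } \gamma \in \Gamma_n.
\]
Summing this over $\gamma \in \Gamma_n$ gives
\[
\sum_{\gamma \in \Gamma_n} \pr{C(\Phi_1,G^{(r)}) \cap Q_\gamma = \emptyset}
\;\le\;
\sum_{\gamma \in \Gamma_n} \pr{C(\Phi_2,G^{(r)}) \cap Q_\gamma = \emptyset},
\]
and the right-hand side is finite for every $n \ge 1$ by the choice $r > \ovr_c(\Phi_2,G)$. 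Hence the left-hand side is finite for every $n \ge 1$ as well, which by the definition~\eqref{e:urc} means $r \ge \ovr_c(\Phi_1,G)$. Letting $r \downarrow \ovr_c(\Phi_2,G)$ concludes the first assertion.

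For the second assertion, I would invoke the last sentence of Proposition~\ref{prop:ord_cap_fnl}: when $G$ is a fixed compact grain, the event $\{C(\Phi,G)\cap B=\emptyset\}$ coincides with $\{\Phi(B+\check G)=0\}$, so the comparison of the Boolean-model void probabilities reduces to the void-probability comparison of the germ pp on the bBs $B+\check G$. Under the assumption $\nu_1(\cdot)\le\nu_2(\cdot)$, the term-by-term inequality above therefore still holds with $G^{(r)}$ in place of $G$ (since $G^{(r)}$ is also a fixed compact grain), and the rest of the argument goes through verbatim. I do not anticipate any real obstacle: the only thing to check carefully is that $Q_\gamma$ and $Q_\gamma+\check G^{(r)}$ are genuinely bBs so that Proposition~\ref{prop:ord_cap_fnl} applies, which follows immediately from the finiteness of $\gamma$ and the compactness of $G^{(r)}$.
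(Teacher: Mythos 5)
Your proposal is correct and follows exactly the route the paper intends: the paper gives no separate proof but states that the corollary ``follows directly from the definition of $\ovr_c$ and Proposition~\ref{prop:ord_cap_fnl}'', which is precisely the term-by-term application of that proposition to each $Q_\gamma$ (a bBs, since contours are finite) with grain $G^{(r)}$ that you carry out. The only unstated micro-step, that $r>\ovr_c(\Phi_2,G)$ implies finiteness of the sums for $\Phi_2$ at $r$ itself, is immediate from the monotonicity of $C(\Phi,G^{(r)})$ in $r$.
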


\begin{rem}
The critical radii $r_c=r_c(\Xi)$ and  $\ovr_c=\ovr_c(\Xi)$ can be
defined for an arbitrary racs $\Xi$
by replacing $C(\Phi,G^{(r)})$ by~$\Xi^{(r)}$ in~(\ref{e:urc}) and~(\ref{e:rc}),
respectively.  The result of Lemma~\ref{lem:ovcrit_rad_ub} remains
true (cf. the proof). Moreover, if two racs $\Xi_1,\Xi_2$ are ordered with
respect to their void probabilities; i.e., $\pr{\Xi_1\cap
  K=\emptyset}\le \pr{\Xi_2\cap K=\emptyset}$ for any compact set $K$,
then $\ovr_c(\Xi_1)\le \ovr_c(\Xi_2)$.
\end{rem}

\subsection{Comparison of moment measures and percolation}
\label{ss.moment-percolation}
We will define now another critical radius for $C(\Phi,G)$ by counting
some paths on it. In this regard,
define {\em an open path $\gamma$ from the origin to
$\partial Q_m$ in $C(\Phi,G)$} (recall the notation from  Section~\ref{ss.AuxModels})
as an (ordered) subset $\langle
X_1,\ldots,X_{k+1}\rangle$ of distinct points of $\Phi$ such that for all
$1 \leq i < k$, $\{X_i\}_{ i \geq 1} \subset Q_m $, $\{X_{i+1} +
G_{i+1}\} \cap \{X_i + G_i\} \neq \emptyset$ and $\{X_{k+1} +
G_{k+1}\} \cap \partial Q_m \neq \emptyset$ , $O \in \{X_1 +
G_1\}$. Let $N_m(\Phi,G)$ be the number of such open paths from the origin
to $\partial Q_m$.

Define the  following new   critical  radius : 
\begin{equation}\label{e:uurc}
\ur_c=\ur_c(\Phi,G):=\inf \left\{r>0 : \liminf_{m\to\infty}
\EXP{N_m(\Phi,G^{(r)})}
 > 0 \right\}\,.
\end{equation}
It corresponds to phase transitions in the Boolean model when the
{\em expected} number of open paths from the origin to the boundary
of an arbitrary large box becomes positive. It is easy to see that
$\ur_c$ is a lower bound for $r_c$.

\begin{lem}
\label{lem:ucrit_rad_ub}
Assume a stationary pp $\Phi$ of germs and let
the typical grain  $G$ be an almost surely connected set.
Then  $\ur_c(\Phi,G)\leq r_c(\Phi,G)$.
\end{lem}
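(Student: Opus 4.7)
The plan is to show that every $r>r_c(\Phi,G)$ satisfies $\liminf_{m\to\infty}\EXP{N_m(\Phi,G^{(r)})}>0$, which by~\eqref{e:uurc} forces $\ur_c(\Phi,G)\le r$; letting $r\downarrow r_c(\Phi,G)$ then gives the lemma.

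First I would reduce continuum percolation to a property of the origin. Fix $r>r_c(\Phi,G)$ and set $A:=\{O\text{ lies in an infinite connected component of }C(\Phi,G^{(r)})\}$. By definition~\eqref{e:rc} of $r_c$, $C(\Phi,G^{(r)})$ percolates with positive probability; combined with the stationarity of $\Phi$ and the equivalence noted just after~\eqref{e:rc}, this yields $\pr A>0$. On $A$, denote by $C_\infty$ the infinite component of $C(\Phi,G^{(r)})$ containing $O$. Since $G$ is almost surely connected, $G^{(r)}$ is connected too, so every grain $X_i+G_i^{(r)}$ is a connected set; as an overlapping union of connected sets, $C_\infty$ is itself a connected, unbounded subset of $\mR^d$.

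Next I would convert this topological picture into a discrete chain of germs. Take $m$ large enough that $O\in Q_m$. Since $C_\infty$ is connected, contains $O$ and is unbounded, it must meet $\partial Q_m$, so one may follow a continuous arc $\pi\subset C_\infty\cap\bar Q_m$ from $O$ to a first hitting point $p\in\partial Q_m$. By local finiteness of the Boolean model the arc meets only finitely many grains; enumerating their germs in the order of first entry as $X_1,\ldots,X_K$ produces a chain with $O\in X_1+G_1^{(r)}$, consecutive grains overlapping, and $p\in(X_K+G_K^{(r)})\cap\partial Q_m$. Provided every $X_i$ lies in $Q_m$, this chain is counted by $N_m(\Phi,G^{(r)})$, so that $N_m\ge\1_A$ and hence $\EXP{N_m(\Phi,G^{(r)})}\ge\pr A>0$.

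The genuine technical obstacle is this germ-location requirement: a priori the $X_i$'s are only known to lie within distance $D:=\mathrm{diam}(G^{(r)})<\infty$ of $\pi\subset\bar Q_m$, hence in $Q_{m+D}$ rather than necessarily in $Q_m$. I would treat this by looking at the last ``good'' index $j^*:=\max\{j\le K:X_j\in Q_m\}$. If the grain $X_{j^*}+G_{j^*}^{(r)}$ already meets $\partial Q_m$, the truncated chain $\langle X_1,\ldots,X_{j^*}\rangle$ is a valid path. Otherwise, connectedness of the individual grains forces the union $(X_{j^*}+G_{j^*}^{(r)})\cup(X_{j^*+1}+G_{j^*+1}^{(r)})$, which contains the interior point $X_{j^*}\in Q_m$ and the exterior point $X_{j^*+1}\notin Q_m$, to straddle $\partial Q_m$; a short geometric argument relying only on this connectedness then produces, for every sufficiently large $m$, a valid chain whose germs all lie in $Q_m$ and whose terminal grain hits $\partial Q_m$, so that $N_m(\Phi,G^{(r)})\ge 1$ on $A$. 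Combining, $\liminf_{m\to\infty}\EXP{N_m(\Phi,G^{(r)})}\ge\pr A>0$, as required.
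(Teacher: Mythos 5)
Your argument is correct and is essentially the paper's own proof: stationarity reduces percolation to the origin percolating, connectivity of the grains turns that into the existence of an open path to $\partial Q_m$ for every $m$, and the bound $\EXP{N_m(\Phi,G^{(r)})}\ge\pr{N_m(\Phi,G^{(r)})\ge 1}\ge\pr{A}>0$ is exactly the Markov-inequality step the paper invokes. The only difference is that you spell out the germ-location issue (keeping the $X_i$ inside $Q_m$), which the paper compresses into the phrase ``by the connectivity of grains''; your truncation at the last good index is a reasonable way to discharge it, though note $X_{j^*}+G^{(r)}_{j^*}$ need not contain the point $X_{j^*}$ itself, so the straddling argument should be phrased in terms of the grain rather than the germ.
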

\begin{proof}
By the stationarity of $\Phi$, for any
 $r>0$, $C(\Phi,G^{(r)})$ percolates with positive probability if
and only if  the origin percolates in $C(\Phi,G^{(r)})$ with positive probability.
Moreover, by the connectivity of grains, this latter statement is
equivalent to the statement that $\pr{N_m(\Phi,G^{(r)}) \geq 1} > 0$ for
all $m\ge1$. Consequently, since the above
sequence of events is decreasing in $m$,
$$r_c(\Phi,G) = \inf \Bigl\{r>0 : \liminf_{m\to\infty}
\pr{N_m(\Phi,G^{(r)}) \geq 1} > 0 \Bigr\}\,.$$
Now, the proof follows by the Markov's inequality.
\end{proof}

Here is the main result of this section.
\begin{prop}
\label{prop:lower_crit_rad}
Let  $C_j=C(\Phi_j,G)$, $j=1,2$ be two Boolean models with the same fixed
(deterministic) grain $G$ and simple pp of germs
$\Phi_j$, $j=1,2,$ with $\sigma$-finite
$k\,$th moment measures $\alpha_j^{k}$, respectively, for all $k\ge1$.
If $\alpha_1^{k}(B_1\times\dots\times B_k)\le
\alpha_2^{k}(B_1\times\dots\times B_k)$
for all $k\ge1$ and all mutually disjoint $B_i$, $i=1,\ldots,k$,
then $\ur_c(\Phi_1,G)\ge \ur_c(\Phi_2,G)$.
In particular, the above inequality is true if
$\Phi_1\le_{dcx}\Phi_2$.
\end{prop}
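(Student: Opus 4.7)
The plan is to express $\sE(N_m(\Phi,G^{(r)}))$ as a deterministic integral against the factorial moment measures of $\Phi$ and to compare the two integrals term by term using the hypothesis. Since $G$ is a fixed deterministic set, whether an ordered tuple $(x_1,\dots,x_{k+1})\in(\mR^d)^{k+1}$ of pairwise distinct points forms an open path from the origin to $\partial Q_m$ in $C(\Phi,G^{(r)})$ is a deterministic Borel function of the $x_i$ alone (involving only the geometric connectivity conditions $\{x_i+G^{(r)}\}\cap\{x_{i+1}+G^{(r)}\}\neq\emptyset$, $\{x_1+G^{(r)}\}\ni O$, $x_i\in Q_m$, and $\{x_{k+1}+G^{(r)}\}\cap\partial Q_m\neq\emptyset$). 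Denote this $\{0,1\}$-valued kernel by $h_{m,r}^{(k+1)}$. It is bounded, non-negative, supported on a bounded subset of $(\mR^d)^{k+1}$, and vanishes on the diagonal by the distinctness requirement. Writing $N_m(\Phi,G^{(r)})$ as the sum over $k\ge 0$ and over ordered $(k+1)$-tuples of distinct points of $\Phi$ of the values $h_{m,r}^{(k+1)}$, Campbell's formula for factorial moment measures gives
$$\sE(N_m(\Phi_j,G^{(r)}))=\sum_{k\ge 0}\int h_{m,r}^{(k+1)}(x_1,\dots,x_{k+1})\,\alpha_j^{(k+1)}(\md x_1\cdots\md x_{k+1}),\qquad j=1,2,$$
valid in $[0,\infty]$ under the $\sigma$-finiteness assumption.

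Next I compare the two integrals. On products of mutually disjoint bBs the ordinary moment measure $\alpha^k$ and the factorial moment measure $\alpha^{(k)}$ agree, so the hypothesis reads $\alpha_1^{(k)}(B_1\times\cdots\times B_k)\le \alpha_2^{(k)}(B_1\times\cdots\times B_k)$ for mutually disjoint $B_1,\dots,B_k$. A monotone class / $\pi$-$\lambda$ argument, carried out on the off-diagonal region of $(\mR^d)^k$ where both $\sigma$-finite measures are uniquely determined by their values on disjoint-rectangle products, extends this to $\int h\,\md\alpha_1^{(k)}\le\int h\,\md\alpha_2^{(k)}$ for every non-negative Borel $h$ that vanishes on the diagonal. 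Applying this to each $h_{m,r}^{(k+1)}$ and summing over $k$ yields $\sE(N_m(\Phi_1,G^{(r)}))\le \sE(N_m(\Phi_2,G^{(r)}))$ for every $m$ and every $r>0$. Taking $\liminf_{m\to\infty}$ preserves the inequality, so $\{r>0:\liminf_m \sE(N_m(\Phi_2,G^{(r)}))>0\}\supset\{r>0:\liminf_m \sE(N_m(\Phi_1,G^{(r)}))>0\}$, whence $\ur_c(\Phi_2,G)\le \ur_c(\Phi_1,G)$.

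For the \emph{in particular} assertion, every $idcx$ function is $dcx$, so $\Phi_1\le_{dcx}\Phi_2$ implies $\Phi_1\le_{idcx}\Phi_2$, and Proposition~\ref{Prop:pcf} then gives $\alpha_1^k(A)\le \alpha_2^k(A)$ for \emph{all} bBs $A\subset(\mR^d)^k$, which is strictly stronger than the hypothesis just used.

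The delicate point, and the main obstacle, is the measure-theoretic extension step: propagating the inequality from rectangles of pairwise disjoint bBs to integrals of arbitrary non-negative Borel functions vanishing on the diagonal. This relies on the facts that $h_{m,r}^{(k+1)}$ has bounded support (so the relevant restrictions of $\alpha_j^{(k+1)}$ are finite by the $\sigma$-finiteness assumption), that it is strictly off-diagonal, and that the $\pi$-system of disjoint-rectangle products generates the Borel $\sigma$-algebra on the off-diagonal subset of $(\mR^d)^{k+1}$, so that monotone convergence and standard approximation by simple functions produce the desired integral inequality.
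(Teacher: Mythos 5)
Your overall architecture is sound and in fact mirrors the paper's strategy: reduce everything to showing $\sE(N_m(\Phi_1,G^{(r)}))\le\sE(N_m(\Phi_2,G^{(r)}))$, observe that on products of mutually disjoint sets the ordinary and factorial moment measures coincide, and then pass from the hypothesis to the expected path counts. The final step (liminf preserves the inequality, hence the sets of radii are nested) and the ``in particular'' reduction via Proposition~\ref{Prop:pcf} are fine. The problem is that the entire content of the proposition is hidden in the step you yourself flag as ``the delicate point,'' and the justification you give for it does not work.

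The monotone class / $\pi$--$\lambda$ theorem shows that two measures which \emph{agree} on a generating $\pi$-system (with a suitable exhaustion) agree on the generated $\sigma$-algebra. It does \emph{not} propagate an \emph{inequality}: the class $\{A:\alpha_1^{(k)}(A)\le\alpha_2^{(k)}(A)\}$ is not a $\lambda$-system, since it is not closed under proper differences ($\mu_1(A)\le\mu_2(A)$ and $\mu_1(A')\le\mu_2(A')$ for $A\subset A'$ give nothing about $A'\setminus A$ unless the masses agree). What you actually need is to write $h_{m,r}^{(k+1)}$ as an increasing limit of non-negative linear combinations of indicators of disjoint-rectangle products, so that monotone convergence applies term by term. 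This fails for your kernel: the connectivity conditions $\{x_i+G^{(r)}\}\cap\{x_{i+1}+G^{(r)}\}\neq\emptyset$ define \emph{closed} sets, and inner approximation by unions of boxes can miss mass sitting on their boundaries (tangential contacts), while outer approximation gives an upper bound for $\alpha_1^{(k+1)}$ where a lower bound for $\alpha_2^{(k+1)}$ of the same quantity would be required. A second, smaller gap: $\sigma$-finiteness of $\alpha_j^{(k)}$ does not imply finiteness on bounded regions, so $\sE(N_m(\Phi_2,G^{(r)}))$ may be $+\infty$ and your term-by-term comparison can degenerate.

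The paper's proof is, in effect, the careful execution of exactly this missing approximation. It applies the moment hypothesis only to products of \emph{disjoint lattice boxes} $Q^n(z_1)\times\dots\times Q^n(z_k)$ along discrete paths (inequality~(\ref{eqn:ineq_no_paths_disc})), where no extension lemma is needed; it sandwiches the continuum count via $N_m(\Phi_i,G^{(r)})\1(n\ge n_0)\le N_m^n(\Phi_i,G^{(r)})\1(n\ge n_0)$ and $N_m^n(\Phi_i,G^{(r)})\le N_m(\Phi_i,G^{(s(n))})$ with the enlarged radius $s(n)=r+\sqrt d/(2n)$; and it recovers the limit using Fatou on the $\Phi_1$ side and reverse Fatou on the $\Phi_2$ side, the latter requiring an integrable dominating function --- which is precisely why the exceptional radii $r_m^*$ (where $\sE(N_m(\Phi_2,G^{(r)}))=\infty$) are introduced and excluded, and why the identity $\bigcap_n C(\Phi_2,G^{(s(n))})=C(\Phi_2,G^{(r)})$ is invoked at the end. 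To close your gap you would need essentially all of this machinery, so the proposal as written does not constitute a proof.
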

Note the reversal of the inequality in the above Proposition for the
critical radii $\ur_c$ with respect to  
the result of Corollary~\ref{cor:ord_crit_rad_racs} regarding $\ovr_c$.
A similar result can also be proved for the critical radius
defined with respect to the expected number  $N^*_n$,
of paths of length $n$ in the Boolean model, as well as for the
expected number of crossings of a rectangle~\footnote{Similarly
to open paths from the origin to
$\partial Q_m$, one can define an {\em open path on the germs of $\Phi$
crossing the rectangle
 $ [0,m] \times
[0,3m] \times \ldots \times [0,3m]$ across the shortest side} and
define yet another critical radius $r_s$ as the smallest $r$  for which such a path
exists with positive probability for an arbitrarily large $m$.
If all grains are {\em path-connected}
then the existence of such a path on the germs of $\Phi$
is equivalent to the existence of a crossing defined as a
continuous curve in $C(\Phi,G)$. This consistency
allows us to relate this new critical radius $r_s$ with
the critical intensity defined in~\cite[(3.20)]{MeeRoy96}.
Hence, by \cite[Proposition 3.5]{MeeRoy96}, $r_s= r_c$ for Boolean models
with an homogeneous Poisson pp of germs and spherical grains of bounded radius (possibly random). Moreover, replacing the probability of the existence
of such crossing by the expected number of crossings, we can define a lower
bound $\ur_s\le r_s$ in the same way as we have defined $\ur_c\le
r_c$.}.

\begin{proof}[Proof of Proposition \ref{prop:lower_crit_rad}]
Define $r_m^* := \inf \Bigl\{r : \EXP{N_m(\Phi_2,G^{(r)})} = \infty
\Bigr\}$ for any $m \geq 1$. (Recall that $N_m(\Phi_i,G^{(r)})$ is the
number of open paths from the origin to
$\partial Q_m$ in $C(\Phi_i,G^{(r)}$)). We will first prove that the
condition
\begin{equation}\label{e.Mm-inequality}
\EXP{N_m(\Phi_1,G^{(r)})} \leq
\EXP{N_m(\Phi_2,G^{(r)})}\,,\quad\text{for all $m\ge1$ and $r \neq r_m^*$}\,
\end{equation}
implies $\ur_c(\Phi_1,G) \geq \ur_c(\Phi_2,G)$.

Indeed, assume~(\ref{e.Mm-inequality}) and suppose that
$\ur_c(\Phi_1,G)\allowbreak < \ur_c(\Phi_2,G)$. Then choose $r \not\in
\{r^*_m\}_{m \in \mN}$ such that $\ur_c(\Phi_1,G) < r <
\ur_c(\Phi_2,G)$ and note that (\ref{e.Mm-inequality}) implies $0 < \liminf_m \EXP{N_m(\Phi_1,G^{(r)})} \leq
\liminf_m \EXP{N_m(\Phi_2,G^{(r)})}$. This contradicts $r <
\ur_c(\Phi_2,G)$ and hence $\ur_c(\Phi_1,G) \geq \ur_c(\Phi_2,G)$. 

Note that (\ref{e.Mm-inequality}) is true for $r > r_m^*$. Thus, in order to prove~(\ref{e.Mm-inequality}), we will work with a fixed $m\ge1$ and $r < r_m^*$ for the remaining part  of the proof.

Let $\Phi_1$ and $\Phi_2$ be the given independent pp of germs
and $G$ be the fixed (deterministic) grain. Let $R$ be such
$G\subset B_O(R)$, where $B_O(R)$ is the ball centered at the origin
of radius $R$. The first observation is that in order to study the paths from the origin to $\partial Q_m$ on $C(\Phi_i,G^{(s)})$, $i=1,2$, and on the discrete graph $\mL^{*d}_n(G^{(s)})$,
$n\ge1$,  with $0\le s\le r+\sqrt d/2,$ it is
enough to consider these models only in $Q_{m'}$ with $m'=m+R+r+\sqrt
d$.

Let $n_0$ be the  smallest $n\ge1$ such that for all $z\in Q_{m'}\cap\mZ^d_n$
either $(\Phi_1+\Phi_2)(Q^n(z)) \leq 1$ or $(\Phi_1+\Phi_2)(Q^n(z)) =
2$ and $\Phi_1$ and $\Phi_2$ share a common atom in $Q^n(z)$.
The random variable $n_0$ is finite since both $\Phi_1$ and $\Phi_2$ are simple pp.

Let us now remark the following two useful relations between Boolean models
$C(\Phi_i,G^{(s)})$, $i=1,2$, and their discrete graph approximations
$\mL^{*d}_n(G^{(s)})$, $n\ge1$.
\begin{itemize}
\item For any $n\ge1$, 
\begin{equation}\label{e.Nm<Mmn*}
N_m(\Phi_i,G^{(r)})\1(n \ge n_0)  \leq
N_m^n(\Phi_i,G^{(r)})\1(n\ge n_0)\,,
\end{equation}
where for a compact set $K$, $N_m^n(\Phi_i,K) = \sum_{\gamma \in
  \Upsilon^n_m(K)} \prod_{z \in \gamma}\Phi_i(Q^n(z))$,
and  (recall from Section~\ref{ss.AuxModels}) $\Upsilon^n_m(K)$
denotes the set of open paths from
the origin to $\partial Q_m$ in the graph $\mL^{*d}_n(K)$.
This follows from the fact that on the event $\{n\ge n_0\}$,
for any $z\in\mZ^d_n$, if $\Phi_i(Q^n(z))>0$ then $\Phi_i$ has at most
one point in $Q^n(z)$. Moreover, for any
two distinct points  $X_1,X_2\in\Phi_i$, $X_1\not=X_2$, necessarily
belonging to two different boxes $Q^n(z_1)$, $Q^n(z_2)$, respectively,
with $z_1\not=z_2$, which  satisfy $\{X_1+G^{(r)}\}\cap
\{X_2+G^{(r)}\}\not=0$, we have that $\{Q^n(z_1)+G^{(r)}\}\cap
\{Q^n(z_2)+G^{(r)}\}\not=\emptyset$.
\item For any $n\ge1$ and $s(n)=r+\sqrt d/(2n)$,
\begin{equation}\label{e.Nsn}
N_m^n(\Phi_i,G^{(r)}) \leq
N_m(\Phi_i,G^{(s(n))})\,.
\end{equation}
This is so because for any $X\in\Phi_i\cap Q^n(z)$ for some
$z\in\mZ^d_n$, we have $\{Q^n(z)+G^{(r)}\}\subset \{X+G^{(s(n))}\}$
and, moreover, in $N_m^n(\Phi_i,G^{(r)})$ we are counting only
the paths of $C(\Phi_i,G^{(s(n))})$ from $O$ to $\partial Q_m$, which
do not have edges between points in the same box $Q^n(z)$ for some $z \in \mZ^d_n$.
\end{itemize}
The following
observation regarding the discrete models is crucial to our proof.
\begin{eqnarray}
\EXP{N_m^n(\Phi_1,G^{(r)})} & = & \sum_{\gamma \in \Upsilon_m^n(G^{(r)})} \EXP{N_{\gamma}(\Phi_1)} \non \\
& \leq & \sum_{\gamma =\langle 0,z_0,z_1,\ldots,z_k\rangle \in  \Upsilon_m^n(G^{(r)})}\EXP{\prod_{i=1}^k \Phi_2(Q^n(z_i))} \non \\
\label{eqn:ineq_no_paths_disc}& =& \EXP{N_m^n(\Phi_2,G^{(r)})} \,
\end{eqnarray}
where the inequality is due to our assumption on the moment measures.
Using~(\ref{e.Nm<Mmn*}) and   Fatou's lemma we get
\begin{eqnarray}
\liminf_{n\to\infty}\EXP{N_m^n(\Phi_1,G^{(r)})}&\ge&
\EXP{\liminf_{n\to\infty}N_m(\Phi_1,G^{(r)})\1(n \ge n_0)\}} \no \\
&=&\EXP{N_m(\Phi_1,G^{(r)})}\, \label{eqn:liminf_EN_m}.
\end{eqnarray}
For $r < r^*_m,$ choose $n_1$ such that $s(n_1)< r^*_m$.
Note that for $n \ge n_1$, again by~(\ref{e.Nsn})
we have
$$N_m^n(\Phi_2,G^{(r)})\le N_m(\Phi_2,G^{(s(n))})\le
N_m(\Phi_2,G^{(s(n_1))})$$
and, by the definition of $ r^*_m$,
$\EXP{N_m(\Phi_2,G^{(s(n_1))})}<\infty$. Hence, using reverse Fatou's
lemma, (\ref{e.Nsn}), (\ref{eqn:ineq_no_paths_disc}) and (\ref{eqn:liminf_EN_m}), we get that
\begin{eqnarray*}
\EXP{N_m(\Phi_1,G^{(r)})} & \leq &  \limsup_{n\to\infty}\EXP{N_m^n(\Phi_1,G^{(r)})} \\
& \leq & \limsup_{n\to\infty}\EXP{N_m^n(\Phi_2,G^{(r)})} \\ &\leq&
\EXP{\limsup_{n\to\infty}N_m^n(\Phi_2,G^{(r)})}\\
&\le& \EXP{\limsup_{n\to\infty} N_m(\Phi_2,G^{(s(n))})}\\
& = &\EXP{N_m(\Phi_2,G^{(r)})}\,,
\end{eqnarray*}
where the last equality follows from the fact that
$\bigcap_{n=1}^\infty C(\Phi_2,G^{(s(n))})=C(\Phi_2,G^{r})$, since
Boolean model is a closed set.
Thus we have shown~(\ref{e.Mm-inequality}), which concludes the proof.
\end{proof}
\begin{rem}\label{rem:sandwich}
Combining the results of Corollary~\ref{cor:ord_crit_rad_racs} and
Proposition~\ref{prop:lower_crit_rad} regarding percolation of the two
Boolean models $C(\Phi_i,G)$, with a fixed connected grain $G$
and  simple pp of germs  $\Phi_i$ of locally
finite moment measures,
we obtain the following sandwich
inequality  for the critical radius $r_c(\Phi_1,G)$ of the Boolean model
$C(\Phi_1,G)$
$$\ur_c(\Phi_2,G) \leq \ur_c(\Phi_1,G) \leq r_c(\Phi_1,G) \leq \ovr_c(\Phi_1,G) \leq \ovr_c(\Phi_2,G)\,,$$
provided $\Phi_1$ is stationary and has smaller void probabilities and
factorial moment measures than $\Phi_2$.
Assume moreover that $\ur_c(\Phi_2,G)>0$ and $\ovr_c(\Phi_2,G)<\infty$.
Then the above sandwich inequality implies that $C(\Phi_1,G)$
exhibits a non-trivial phase transition relative to the percolation
radius; i.e.,  $0<r_c(\Phi_1,G)<\infty$.
\end{rem}

\section{Non-trivial phase transition for percolation models on
 sub-Poisson point processes}
\label{sec:sub-poisson}

Following  Remark~\ref{rem:sandwich},
with $\Phi_2$ being a homogeneous Poisson pp
$\Phi_\lambda,$ one could prove the existence of a non-trivial phase
transition in the Boolean model driven by a sub-Poisson pp
(or weakly sub-Poisson pp in the case of fixed grains) $\Phi_1,$ by
showing that $\ur_c(\Phi_\lambda,G)>0$ and
$\ovr_c(\Phi_\lambda,G)<\infty$. Even for a Poisson pp, a direct verification of these latter conditions
evades us. However, the
existence of a nontrivial phase transition for the percolation of Boolean model
and other models driven by sub-Poisson pp can be shown and that shall be the goal of this section.

We will be particularly interested in  percolation models on level-sets of additive shot-noise fields.
The rough idea is as
follows: level-crossing
probabilities for these models can be bounded using Laplace transform
of the underlying pp. For sub-Poisson pp (pp that are $dcx$ smaller than
Poisson pp), this can further be bounded by the Laplace transform of
the corresponding Poisson pp, which has a closed-form expression.
For  'nice' response functions of the shot-noise,
these expressions are amenable enough to deduce the asymptotic bounds
on the expected number of closed contours around the origin or the
expected number of open paths of a given length from the origin
and thus, using standard arguments, deduce percolation or
non-percolation of a suitable discrete approximation of the model.
In what follows we shall carry out this program for $k$-percolation
in the Boolean model and percolation in the SINR model.
For the similar study of word percolation
see~\cite[Section~6.3.3]{Yogesh_thesis}. 

\subsection{Bounds in discrete models}
\label{sec:bds_disc_models}

We shall start with a generic bound on a discrete model which
shall be used to prove  bounds in the continuum models.
Denote by  $V_{\Phi}(x) := \sum_{X \in \Phi}\ell(x,X)$ the (additive)
shot-noise field generated by a pp $\Phi$
and a non-negative response function $\ell(\cdot,\cdot)$ defined on
$\mR^d\times\mR^d$.
 Define the corresponding lower and upper level sets of this
 shot-noise field on  the lattice $r\mZ^d$ by
$\mZ^d_r(V_{\Phi},\leq h) := \{z \in r\mZ^d : V_{\Phi}(z) \leq h \}$
and $\mZ^d_r(V_{\Phi},\geq h) := \{z \in r\mZ^d : V_{\Phi}(z) \geq h
\}$. We will be interested in percolation of
$\mZ^d_r(V_{\Phi},\leq h)$ and $\mZ^d_r(V_{\Phi},\geq
h)$ understood in the sense of site-percolation of the close-packed
lattice $\mL^{*d}(r)$ (cf Section~\ref{ss.AuxModels}).

\begin{rem}\label{rem:perc-standard-arguments}
Recall that the number of  contours surrounding
the origin  in $\mL^{*d}(r)$
~\footnote{A contour  surrounding
the origin in $\mL^{*d}(r)$ is a
minimal collections of vertices of $\mL^{*d}(r)$ such
that any infinite path on this graph
from the origin has to contain one of these
vertices.}
is at most $n(3^d-2)^{n-1}$.
Hence, in order to prove percolation of a
given model using Peierls argument, it is enough to show that
the corresponding probability of having $n$ distinct sites simultaneously closed is
smaller than $\rho^{n}$ for some $0\le\rho<(3^d-2)^{-1}$ for $n$ large enough.
Similarly, since the number of paths of length $n$ starting from the
origin is at most $(3^d-1)^n$, in order to disprove percolation of a
given model it is enough to show that
the corresponding probability of having $n$ distinct sites simultaneously open is
smaller than $\rho^{n}$ for some $0\le\rho<(3^d-1)^{-1}$ for $n$ large
enough.%
\footnote{The bounds $n(3^d-2)^{n-1}$ and $(3^d-1)^n$ are not tight;
  we use them for simplicity of exposition. For more about the former bound, refer \cite{Lebowitz98,Balister07}.}
\end{rem}

The following result allows us to derive the afore-mentioned bounds.
We restrict ourselves to the stationary case.
\begin{lem}
\label{lem:gen_bds}
Let $\Phi$ be a stationary pp and $V_{\Phi}(\cdot)$,
$\mZ^d_r(V_{\Phi},\leq h)$, $\mZ^d_r(V_{\Phi},\geq h)$ be as
defined above. Let $\Phi_\lambda$ be the homogeneous Poisson pp with
intensity $\lambda$ on $\mR^d$.
If $\Phi \leq_{idcx} \Phi_{\lam}$ then for any $s > 0$,
\begin{equation}
\label{lem:bd_uls}
\pr{V_{\Phi}(z_i) \geq h , 1 \leq i \leq n} \leq e^{-snh} \exp\left\{\lam \int_{\mR^d}(e^{s\sum_{i=1}^n\ell(x,z_i)}-1) \md x \right\}.
\end{equation}
If $\Phi \leq_{ddcx} \Phi_{\lam}$ then for any $s > 0$,
\begin{equation}
\label{lem:bd_lls}
\pr{V_{\Phi}(z_i) \leq h , 1 \leq i \leq n} \leq e^{snh} \exp\left\{\lam \int_{\mR^d}(e^{-s\sum_{i=1}^n\ell(x,z_i)}-1) \md x \right\}.
\end{equation}
\end{lem}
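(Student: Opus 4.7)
My plan is to reduce both inequalities to a comparison of Laplace functionals via a Chernoff-type argument, and then exploit the $idcx/ddcx$ assumption on $\Phi$ to replace $\Phi$ by $\Phi_\lambda$, for which the Laplace functional has an explicit closed form.

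For the upper level set bound, I would start with the elementary observation
\begin{equation*}
\pr{V_\Phi(z_i)\ge h,\ 1\le i\le n}\le \pr{\textstyle\sum_{i=1}^n V_\Phi(z_i)\ge nh}\le e^{-snh}\,\EXP{e^{s\sum_{i=1}^n V_\Phi(z_i)}}
\end{equation*}
for arbitrary $s>0$. Writing $g(x):=\sum_{i=1}^n \ell(x,z_i)\ge 0$, this moment-generating function equals $\EXP{\exp(s\sum_{X\in\Phi}g(X))}$, i.e.\ the Laplace transform $L_\Phi(sg)$. Once I show $L_\Phi(sg)\le L_{\Phi_\lambda}(sg)$, the standard formula for the Laplace functional of a homogeneous Poisson pp, $L_{\Phi_\lambda}(sg)=\exp\{\lambda\int_{\mR^d}(e^{sg(x)}-1)\,\md x\}$, gives exactly the right-hand side of~(\ref{lem:bd_uls}). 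The lower level set bound is analogous: from $V_\Phi(z_i)\le h$ for all $i$ I pass to $\sum_i V_\Phi(z_i)\le nh$, use $\pr{\sum_i V_\Phi(z_i)\le nh}\le e^{snh}\EXP{e^{-s\sum_i V_\Phi(z_i)}}$, and need $L_\Phi(-sg)\le L_{\Phi_\lambda}(-sg)$.

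To compare the Laplace transforms through the $dcx$-type assumption, I would approximate. Pick a nested sequence of bounded partitions $\{B_k^{(N)}\}_{k}$ of $\mR^d$ with mesh going to zero and choose representative points $y_k^{(N)}\in B_k^{(N)}$. Define the simple-function approximation $g_N(x):=\sum_k g(y_k^{(N)})\1[x\in B_k^{(N)}]$, so that
\begin{equation*}
\sum_{X\in\Phi}g_N(X)=\sum_k g(y_k^{(N)})\,\Phi(B_k^{(N)}).
\end{equation*}
For fixed $s>0$ and each $N$, the function $\phi_N(u_1,u_2,\ldots)=\exp(s\sum_k g(y_k^{(N)})u_k)$ restricted to finitely many coordinates is increasing, coordinate-wise convex, and supermodular (all its mixed and pure second derivatives are nonnegative), hence $idcx$; similarly $\exp(-s\sum_k g(y_k^{(N)})u_k)$ is decreasing, coordinate-wise convex, and supermodular, hence $ddcx$. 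So the assumption $\Phi\le_{idcx}\Phi_\lambda$ (resp.\ $\le_{ddcx}\Phi_\lambda$) applied to the vector $(\Phi(B_k^{(N)}))_k$ gives
\begin{equation*}
\EXP{\exp(\pm s\sum_{X\in\Phi}g_N(X))}\le\EXP{\exp(\pm s\sum_{X\in\Phi_\lambda}g_N(X))}.
\end{equation*}

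The last step is to pass to the limit $N\to\infty$. For continuous $\ell(\cdot,z_i)$ one has $g_N\to g$ pointwise, and then $\sum_X g_N(X)\to\sum_X g(X)$ almost surely (along a subsequence, using that only finitely many $X\in\Phi$ contribute on any bounded region). For the upper-level bound, monotone or dominated convergence—using that the right-hand side for $\Phi_\lambda$ is finite whenever $\int(e^{sg}-1)<\infty$—lets me pass to the limit on both sides. For the lower-level bound the exponentials are bounded by $1$, so dominated convergence is immediate. If $\ell$ is only measurable one approximates $g$ monotonically by continuous truncations first, using monotone convergence.

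The main obstacle is the approximation/limit step: one has to make sure that $dcx$-type ordering, which is stated at the level of finite-dimensional vectors of counts, transfers to the integral functional $\int g\,d\Phi$, and that the bounds do not blow up in the limit. This hinges on the $idcx$-function class being stable under pointwise monotone limits with uniform growth control (cf.\ footnote~\ref{fn:f-Ox}), which is exactly why the local finiteness of mean measures is built into the framework. Once this is handled, the rest is just the Chernoff bound and the Poisson Laplace formula.
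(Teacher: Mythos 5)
Your proposal is correct and follows essentially the same route as the paper's proof: a Chernoff bound on $\sum_i V_\Phi(z_i)$, comparison of the resulting Laplace transform $\EXP{\exp(\pm s\sum_{X\in\Phi}g(X))}$ with that of $\Phi_\lambda$ via the $idcx$/$ddcx$ assumption, and then the closed-form Poisson Laplace functional. The only difference is that you re-derive the transfer of the ordering from count vectors to the integral functional $\sum_{X\in\Phi}g(X)$ by discretization and passage to the limit, whereas the paper invokes this transfer directly (it is the shot-noise ordering result established in the cited reference \cite{snorder}); your limit argument is the standard one and raises no real difficulty for the indicator-type response functions actually used later.
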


\begin{proof}
In order to prove the first  statement, observe by Chernoff's
inequality that for any $s > 0,$
\begin{eqnarray*}
\pr{V_{\Phi}(z_i) \geq h , 1 \leq i \leq n} & \leq & \pr{\sum_{i=1}^nV_{\Phi}(z_i) \geq nh } \\
& \leq & e^{-snh}\EXP{\exp\left\{s\sum_{i=1}^nV_{\Phi}(z_i)\right\}}
\\
& \leq &
e^{-snh}\EXP{\exp\left\{s\sum_{i=1}^nV_{\Phi_{\lam}}(z_i)\right\}}%
\\
& = & e^{-snh}\exp\left\{- \lam \int_{\mR^d}(1 -
e^{s\sum_{i=1}^n\ell(x,z_i)}) \md x\right\}\,, 
\end{eqnarray*}
where the third inequality follows from the ordering
$\Phi\le_{idcx}\Phi_\lambda$ and the equality by the known
representation of the Laplace transform of a functional of Poisson
pp (cf~\cite[eqn.~9.4.17 p.~60]{DVJII2007}).

The proof of the second statement follows along the same lines
by noting that for any random variable $X$ and any  $a \in \mR, s > 0$,
$\pr{X \leq a} = \pr{e^{-sX} \geq e^{-sa}} \leq e^{sa} \EXP{e^{-sX}}$.
\end{proof}

\subsection{$k$-percolation in Boolean model}
\label{sec:k_perc}
By $k$-percolation in a Boolean model, we understand percolation of the subset of
the space covered by at least $k$ grains of the Boolean model.
The aim of this section is to show that for sub-Poisson pp (i.e, pp
that are $dcx$-smaller than Poisson pp), the critical
intensity for $k$-percolation of the Boolean model is non-degenerate. The result for $k=1$
(i.e., the usual percolation) holds under a weaker
assumption of ordering of void probabilities and factorial moment measures.

Given a  pp of germs $\Phi$  and the distribution of the
 typical grain  $G$, extending the definition of the Boolean model,
we define the coverage field $V_{\Phi}(x) :=
 \sum_{X_i \in \Phi} \1[x \in X_i + G_i]$,  where
given $\Phi$, $\{G_i\}$ is a sequence of i.i.d. racs, distributed as
$G$. The $k$-covered set is defined as
$C_k(\Phi,G) := \{x : V_{\Phi}(x) \geq k \}$.
Note that $C_1(\Phi,G)=C(\Phi,G)$ is the Boolean model considered in
Section~\ref{s.Boolean}.
For $k\ge 1$, define the {\em critical radius for $k$-percolation} as
$$ r^k_c(\Phi,G) := \inf \{r : \pr{\mbox{$C_k(\Phi,G^{(r)})$
    percolates}} > 0 \}\,,$$
where, as before, percolation means existence of an unbounded connected subset.
Clearly, $r_c^1(\Phi,G) = r_c(\Phi,G) \leq r_c^k(\Phi,G)$.
Again,  we shall abbreviate $C_k(\Phi,B_O(r))$ by $C_k(\Phi,r)$ and $r^k_c(\Phi,\{O\})$ by $r^k_c(\Phi)$.
\begin{prop}
\label{thm:k_perc_sub-Poisson_pp}
Let $\Ph$ be a stationary pp. For $k \geq 1, \lam > 0$, there exist constants $c(\lam)$ and
$c(\lam,k)$ (not depending on the distribution of $\Phi$) such that
$0 < c(\lam) \leq r_c^1(\Phi)$ provided  $\Phi \leq_{idcx}
\Phi_{\lam}$ 
and $r_c^k(\Phi) \leq c(\lam,k) < \infty$ provided
$\Phi \leq_{ddcx} \Phi_{\lam}$.
Consequently, for $\Phi \leq_{dcx} \Phi_{\lam}$ combining both the
above statements,
we have that
$$ 0 < c(\lam) \leq r_c^1(\Phi) \leq r_c^k(\Phi) \leq c(\lam,k) < \infty. $$
\end{prop}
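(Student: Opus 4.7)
The statement splits into two independent inequalities: a lower bound $c(\lam) \le r_c^1(\Phi)$ under the $idcx$ hypothesis, and an upper bound $r_c^k(\Phi) \le c(\lam,k)$ under the $ddcx$ hypothesis. Together with the trivial $r_c^1(\Phi) \le r_c^k(\Phi)$, they yield the full sandwich under $\Phi \le_{dcx} \Phi_\lam$. In both halves, the common engine is Lemma~\ref{lem:gen_bds}, which yields Laplace-type estimates for the underlying shot-noise fields that depend on $\Phi$ only through its order with respect to $\Phi_\lam$; hence the resulting thresholds depend only on $\lam$ and $k$.

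\emph{Lower bound ($idcx$ case).} I would discretize $\mR^d$ by the cubes $\{Q_b(z)\}_{z \in b\mZ^d}$ with $b = 4r$. Since any two points of $\Phi$ at distance $\le 2r$ lie in the same or in close-packed neighbouring cubes of $\mL^{*d}(b)$, percolation of $C(\Phi,r)$ forces a self-avoiding path of arbitrary length in $\mL^{*d}(b)$ whose cubes all contain at least one point of $\Phi$. Applying~(\ref{lem:bd_uls}) with $\ell(x,z) = \1[x \in Q_b(z)]$ and $h=1$ to $n$ distinct vertices $z_1,\ldots,z_n$ (whose cubes are pairwise disjoint) gives
\begin{equation*}
\pr{\Phi(Q_b(z_i)) \ge 1,\ 1 \le i \le n} \le \bigl(e^{-s+\lam b^d(e^s-1)}\bigr)^n\,,
\end{equation*}
whose optimum over $s>0$ is at most $(e\lam b^d)^n$ as soon as $\lam b^d \le 1$. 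Combining this with the bound $(3^d-1)^n$ on self-avoiding paths of length $n$ from the origin in $\mL^{*d}(b)$ (cf.\ Remark~\ref{rem:perc-standard-arguments}) and using Markov's inequality, the probability of existence of such an $n$-path vanishes as $n \to \infty$ whenever $e\lam b^d(3^d-1) < 1$, ruling out percolation. Setting $c(\lam) := [4^d e\lam(3^d-1)]^{-1/d} > 0$ then yields $r_c^1(\Phi) \ge c(\lam)$.

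\emph{Upper bound ($ddcx$ case).} For this direction I would run a Peierls argument on $\mL^{*d}(b)$ with $b = \eta r$, for a fixed small $\eta > 0$. Call a vertex $z \in b\mZ^d$ \emph{open} when $\sum_{X_i\in\Phi}\1[\,|X_i - z| \le r - b\sqrt d\,] \ge k$; every point of $Q_b(z)$ is then $k$-covered in $C_k(\Phi,r)$, and two adjacent open vertices in $\mL^{*d}(b)$ lie in a common connected $k$-covered region. Applying~(\ref{lem:bd_lls}) to the associated shot-noise with $\ell(x,z) = \1[\,|x-z| \le r - b\sqrt d\,]$ and $h = k-1$, and lower-bounding the volume of $\bigcup_i B_{z_i}(r - b\sqrt d)$ by the disjoint Voronoi cubes $Q_b(z_i) \subset B_{z_i}(r-b\sqrt d)$ (valid whenever $r \ge \tfrac{3}{2} b\sqrt d$), I obtain for any $n$ distinct vertices
\begin{equation*}
\pr{z_1,\ldots,z_n \text{ all closed}} \le \bigl(e^{s(k-1)}\,e^{-\lam b^d(1 - e^{-s})}\bigr)^n\,.
\end{equation*}
Choosing $s = \ln 2$, the bracket becomes $2^{k-1} e^{-\lam \eta^d r^d /2}$, which drops below $(3^d-2)^{-1}$ as soon as $r$ exceeds a threshold depending only on $\lam$, $k$ and $\eta$. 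Together with the contour count $n(3^d-2)^{n-1}$ of Remark~\ref{rem:perc-standard-arguments}, this makes the expected number of closed contours surrounding the origin finite, so the origin a.s.\ lies in an infinite open cluster and $C_k(\Phi,r)$ percolates. This produces the desired finite $c(\lam,k)$.

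The main technical point in both halves is the geometric tuning of the discretization: for the lower bound, $b = 4r$ must be coarse enough that neighbouring $\Phi$-points in $C(\Phi,r)$ sit in neighbouring cubes of $\mL^{*d}(b)$; for the upper bound, the contracted radius $r - b\sqrt d$ must simultaneously keep open cubes genuinely $k$-covered and keep the Voronoi volume estimate available, both achievable by taking $\eta$ small. Once these geometric choices are locked down, the sub-Poisson estimates of Lemma~\ref{lem:gen_bds} translate directly into the required exponential tail bounds, and the classical path- and contour-counting arguments recalled in Remark~\ref{rem:perc-standard-arguments} close the two cases.
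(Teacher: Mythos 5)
Your proof is correct and follows essentially the same route as the paper's: both halves rest on the exponential estimates of Lemma~\ref{lem:gen_bds} applied to indicator shot-noise fields on a close-packed lattice, followed by the path count and the Peierls contour count of Remark~\ref{rem:perc-standard-arguments}; the paper merely uses a different discretization for the upper bound (cubes of side $r/\sqrt{d}$ declared open when they contain at least $\lceil k/2\rceil$ points, adjacent open cubes then jointly providing $k$-coverage near their common boundary), whereas you use a finer lattice with shrunk balls and threshold $k$, which is equally valid and geometrically a bit cleaner. The only slip is cosmetic: the containment $Q_b(z)\subset B_z(r-b\sqrt{d})$ requires $r\ge 2b\sqrt{d}$ rather than $\tfrac{3}{2}b\sqrt{d}$, which is harmless since $\eta$ is a free small parameter.
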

\begin{rem}
\begin{enumerate}
\item More simply, the theorem gives an upper and lower bound for the critical radius of a sub-Poisson pp dependent only on its mean measure (as this determines the $\lam$ in $\Phi_{\lam}$) and not on the finer structure.

\item The following extensions follow by obvious coupling arguments. For a racs $G \subset B_O(R)$ $a.s.$ with $R \geq 0 $, we have that $r_c(\Phi,G) \geq r_c(\Phi,B_O(R)) \geq (c(\lam) - R)^+$ and for a racs $G \supset B_O(R)$ $a.s.$ with $R \geq 0$, $r^k_c(\Phi,G) \leq r^k_c(\Phi,B_O(R)) \leq (c(\lam,k) - R)^+$.

\end{enumerate}

\end{rem}

\begin{proof}[Proof of Proposition~\ref{thm:k_perc_sub-Poisson_pp}]
In order to prove the first statement, let $\Phi \leq_{idcx}
\Phi_{\lam}$ and $r > 0$. Consider the close
packed lattice $\mL^{*d}(2r)$. Define the response function $l_r(x,y)
:= \1[x \in Q_r(y)]$ and the corresponding shot-noise field
$V^r_{\Phi}(z)$ on $\mL^{*d}(2r)$. Note that if $C(\Phi,r)$ percolates
then $\mZ^d_{2r}(V^r_{\Phi},\geq 1)$ percolates as well.
We shall now show that there exists a $r >0$ such that
$\mZ^d_{2r}(V^r_{\Phi},\geq 1)$ does not percolate.
For any $n$ and $z_i \in r\mZ^d, 1 \leq i \leq n$, $\sum_{i=1}^n l_r(x,z_i) =  1$ iff $x \in \bigcup_{i=1}^n Q_r(z_i)$ and else $0$. Thus, from Lemma \ref{lem:gen_bds}, we have that
\begin{eqnarray}
\pr{V^r_{\Phi}(z_i) \geq 1 , 1 \leq i \leq n} & \leq & e^{-sn}
\exp\left\{\lam \int_{\mR^d}(e^{s\sum_{i=1}^nl_r(x,z_i)}-1) \md
  x\right\}\, ,
  \non \\
& = & e^{-sn} \exp\left\{\lam \|\bigcup_{i=1}^n Q_r(z_i)\| (e^s-1)
\right\} \md x\, , \non \\
\label{eqn:gen_bd_non_perc} & = & (\exp\{-(s+ (1-e^s)\lam (2r)^d) \})^n.
\end{eqnarray}
Choosing $s$ large enough that $e^{-s} < (3^d -1)^{-1}$ and then by
continuity of $(s+ (1-e^s)\lam (2r)^d)$ in $r$, we can choose a
$c(\lam) > 0$ such that for all $r < c(\lam)$, $\exp\{-(s+ (1-e^s)\lam
(2r)^d))\} < (3^d -1)^{-1}$. Now, using the standard argument
involving the expected number of open paths (cf Remark~\ref{rem:perc-standard-arguments}),
we can show non-percolation of $\mZ^d_{2r}(V^r_{\Phi},\geq 1)$ for $r
< c(\lam)$. Hence for all $r < c(\lam)$,  $C(\Phi,r)$ does not
percolate and so $c(\lam) \leq r_c(\Phi)$.

For the second statement, let $\Phi \leq_{ddcx} \Phi_{\lam}$.
Consider the close packed lattice
$\mL^{*d}(\frac{r}{\sqrt{d}})$. Define the response function $l_r(x,y)
:= \1[x \in Q_{\frac{r}{2\sqrt{d}}}(y)]$ and the corresponding
additive shot-noise field $V^r_{\Phi}(z)$ on
$\mL^{*d}(\frac{r}{\sqrt{d}})$. Note that $C_k(\Phi,r)$ percolates if
$\mZ^d_{\frac{r}{\sqrt{d}}}(V^r_{\Phi},\geq \lceil k/2 \rceil)$
percolates,
where $\lceil a \rceil=\min\{z\in\mZ: z\ge a\}$.
We shall now show that there exists a $r < \infty$ such that $\mZ^d_{\frac{r}{\sqrt{d}}}(V^r_{\Phi},\geq \lceil k/2 \rceil)$ percolates.
For any $n$ and $z_i, 1 \leq i \leq n$, from Lemma \ref{lem:gen_bds}, we have that
\begin{eqnarray}
\lefteqn{\pr{V^r_{\Phi}(z_i) \leq \lceil k/2 \rceil - 1 , 1 \leq i
    \leq n}}\non \\
 & \leq & e^{sn(\lceil k/2 \rceil - 1)} \exp\left\{\lam \int_{\mR^d}(e^{-s\sum_{i=1}^nl_r(x,z_i)}-1)\, \md x \right\}\ \non \\
& = & e^{sn(\lceil k/2 \rceil - 1)} \exp\left\{\lam \|\bigcup_{i=1}^n Q_{\frac{r}{2\sqrt{d}}}(z_i)\| (e^{-s}-1)  \,\md x\right\} \non \\
\label{eqn:gen_bd_perc} & = & (\exp\{-((1-e^{-s})\lam (\frac{r}{\sqrt{d}})^d - s(\lceil k/2 \rceil - 1)) \})^n.
\end{eqnarray}
For any $s$, there exists $c(\lam,k,s) < \infty$ such that for all $r
> c(\lam,k,s)$, the last term in the above equation is strictly less
than $(3^d-1)^{-n}$. Thus one can use
the standard argument involving the expected number of closed contours
around the origin
(cf Remark~\ref{rem:perc-standard-arguments})
to show that $\mZ^d_{\frac{r}{\sqrt{d}}}(V^r_{\Phi},\geq \lceil k/2
\rceil )$ percolates.
 Further defining $c(\lam,k) := \inf_{s > 0} c(\lam,k,s)$, we have that $C_k(\Phi,r)$ percolates for all
$r > c(\lam,k)$. Thus $r^k_c(\Phi) \leq c(\lam,k)$.
\end{proof}
For $k=1$; i.e., for the usual percolation in Boolean model,
we can avoid the usage of exponential estimates of
Lemma~\ref{lem:gen_bds}  and work with void probabilities and factorial
moment measures only, as in Section~\ref{s.Boolean}.
The gain is two-fold: we extend the result to weakly sub-Poisson pp
(cf. Section~\ref{ss.sub-suuper-Poisson})
and moreover, improve the bounds on the critical radius.
\begin{prop}
\label{prop:Peierls_arg_bd_pp}
Let $\Phi$ be a stationary pp of intensity $\lambda$ and $\nu$-weakly
sub-Poisson (i.e., it has void probabilities smaller than those of
$\Phi_\lambda$).
Then $r_c(\Phi) \leq \tilde{C}(\lam) := \sqrt{d}\left(\frac{\log (3^d-2)}{\lam}\right)^{1/d} \leq c(\lam,1) < \infty$.
\end{prop}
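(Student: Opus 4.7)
The plan is to apply the Peierls contour criterion of Remark~\ref{rem:perc-standard-arguments} to a discrete site-percolation model derived from $\Phi$, using only the void-probability bound offered by the $\nu$-weakly sub-Poisson assumption. This sidesteps the Chernoff/Laplace-transform step of Lemma~\ref{lem:gen_bds} employed in the proof of Proposition~\ref{thm:k_perc_sub-Poisson_pp}, which is the source of the twofold gain: the hypothesis is relaxed from $ddcx$ to $\nu$-weakly sub-Poisson, and the explicit constant is (at worst) no larger.

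Fix $r>\tilde C(\lam)$ and pick $n>0$ with
\[
\sqrt d/r\le n<\bigl(\lam/\log(3^d-2)\bigr)^{1/d},
\]
which is possible precisely because $r>\tilde C(\lam)=\sqrt d(\log(3^d-2)/\lam)^{1/d}$. The lattice $\mathbb{L}^{*d}_n$ and its partitioning boxes $\{Q^n(z)\}_{z\in\mZ^d_n}$ make sense for any $n>0$, so the integrality convention of Section~\ref{ss.AuxModels} is inessential. Declare $z\in\mZ^d_n$ \emph{open} if $\Phi(Q^n(z))\ge 1$ and \emph{closed} otherwise. Two close-packed neighbors $z_1,z_2\in\mathbb{L}^{*d}_n$ that are both open contain points $X_i\in\Phi\cap Q^n(z_i)$ with $|X_1-X_2|\le 2\sqrt d/n\le 2r$, so $B_{X_1}(r)\cap B_{X_2}(r)\ne\emptyset$; hence percolation of the open sites in $\mathbb{L}^{*d}_n$ transfers to percolation of $C(\Phi,r)$.

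To establish this discrete percolation via Remark~\ref{rem:perc-standard-arguments}, consider any $m$ distinct sites $z_1,\ldots,z_m\in\mZ^d_n$. Their boxes $Q^n(z_i)$ are pairwise disjoint, each of volume $1/n^d$, so by the $\nu$-weakly sub-Poisson property,
\[
\pr{z_1,\ldots,z_m\text{ all closed}}=\pr{\Phi\cap\textstyle\bigcup_{i=1}^m Q^n(z_i)=\emptyset}\le e^{-\lam m/n^d}=\rho^m,
\]
with $\rho:=e^{-\lam/n^d}<(3^d-2)^{-1}$ by the choice of $n$. The Peierls criterion then yields the required site percolation, and hence $r\ge r_c(\Phi)$; letting $r\downarrow\tilde C(\lam)$ gives $r_c(\Phi)\le\tilde C(\lam)$. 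The remaining inequality $\tilde C(\lam)\le c(\lam,1)<\infty$ is a direct comparison of explicit constants: reading off $c(\lam,1)$ from~\eqref{eqn:gen_bd_perc} at $k=1$ and $s\to\infty$ produces a logarithmic factor no smaller than $\log(3^d-2)$.

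The argument is essentially self-contained; the only balancing required is the simultaneous choice of $n$ satisfying $n<(\lam/\log(3^d-2))^{1/d}$ (for Peierls) and $n\ge\sqrt d/r$ (for the geometric transfer), which is exactly encoded in the definition of $\tilde C(\lam)$. I do not anticipate any genuine obstacle beyond verifying these elementary inequalities.
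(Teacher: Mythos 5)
Your proposal is correct and follows essentially the same route as the paper's proof: discretize to a close-packed lattice whose cells are small enough that points of $\Phi$ in adjacent open cells have overlapping $r$-balls, bound the probability that $m$ distinct cells are simultaneously empty by the Poisson void probability $e^{-\lambda m \cdot \mathrm{vol}}$ using the $\nu$-weak sub-Poisson hypothesis, and invoke the Peierls criterion of Remark~\ref{rem:perc-standard-arguments}. The only cosmetic difference is that the paper fixes the lattice scale at $r/\sqrt d$ (the extremal choice), whereas you keep the scale $1/n$ free and observe that a compatible choice exists exactly when $r>\tilde C(\lambda)$; the comparison $\tilde C(\lambda)\le c(\lambda,1)$ also matches the paper's.
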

\begin{proof}
As in the second part of the proof of Theorem
\ref{thm:k_perc_sub-Poisson_pp}, consider the close packed lattice
$\mL^{*d}(\frac{r}{\sqrt{d}})$. Define the response function $l_r(x,y)
:= \1[x \in Q_{\frac{r}{2\sqrt{d}}}(y)]$ and the corresponding
extremal shot-noise field $U^r_{\Phi}(z):=\sup_{X\in\Phi}l_r(z,X)$ on
$\mL^{*d}(\frac{r}{\sqrt{d}})$. Now,
note that $C(\Phi,r)$ percolates if
$\{z:U^r_{\Phi}(z) \geq 1\}$ percolates on $\mL^{*d}(\frac{r}{\sqrt{d}})$. We shall
now show that this holds true for $r > \tilde{c}(\lam)$.
Using the ordering of void probabilities we have
\begin{eqnarray}
\pr{U^r_{\Phi}(z_i) = 0 , 1 \leq i \leq n} & = & \pr{\Phi \cap \bigcup_{i=1}^n Q_{\frac{r}{2\sqrt{d}}}(z_i) = \emptyset} \non \\
& \leq & \pr{\Phi_{\lam} \cap \bigcup_{i=1}^n Q_{\frac{r}{2\sqrt{d}}}(z_i) = \emptyset } \non \\
\label{eqn:Peierl_bd_perc} & = & \left(\exp\left\{-\lam
    (\frac{r}{\sqrt{d}})^d \right\}\right)^n\,.
\end{eqnarray}
Clearly, for $r > \tilde{c}(\lam),$ the exponential term above is
less than $(3^d-2)^{-1}$ and thus   $\{z:U^r_{\Phi}(z) \geq 1\}$
percolates by Peierls argument (cf
Remark~\ref{rem:perc-standard-arguments}).
 It is easy to see that for any $s >
0$,  $\exp\{-\lam (\frac{r}{\sqrt{d}})^d\} \leq \exp\{-(1-e^{-s})\lam
(\frac{r}{\sqrt{d}})^d\}$ and hence $\tilde{c}(\lam) \leq c(\lam,1)$.
\end{proof}

\begin{prop}
\label{prop:non_perc_joint_int}
Let $\Phi$ be a stationary, $\alpha$-weakly
sub-Poisson pp of intensity $\lambda$  (i.e., it has all factorial moment measures smaller than those of
$\Phi_\lambda$).
Then $r_c(\Phi)\geq \frac12\frac{1}{(\lambda(3^d-1))^{1/d}} > 0$.
\end{prop}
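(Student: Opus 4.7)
The plan is to mirror the first (non-percolation) half of the proof of Proposition~\ref{thm:k_perc_sub-Poisson_pp}, with the Laplace-transform estimate of Lemma~\ref{lem:gen_bds} replaced by Markov's inequality combined with the factorial-moment hypothesis~(\ref{e.alpha-weakly-sub-Poisson}). Work on the close-packed lattice $\mL^{*d}(2r)$: partition $\mR^d$ by the half-open cubes $\{Q_r(z):z\in 2r\mZ^d\}$ and call a vertex $z$ open iff $\Phi(Q_r(z))\ge 1$. As in the proof of Proposition~\ref{thm:k_perc_sub-Poisson_pp}, percolation of $C(\Phi,r)$ forces percolation of this open-site process on $\mL^{*d}(2r)$.

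For any self-avoiding close-packed path $\langle z_0,\ldots,z_n\rangle$ the cubes $Q_r(z_0),\ldots,Q_r(z_n)$ are pairwise disjoint, so Markov's inequality and~(\ref{e.alpha-weakly-sub-Poisson}) yield
\begin{equation*}
\pr{\Phi(Q_r(z_i))\ge 1,\;0\le i\le n}\le\EXP{\prod_{i=0}^{n}\Phi(Q_r(z_i))}\le\bigl(\lambda(2r)^d\bigr)^{n+1}.
\end{equation*}
At most $(3^d-1)^n$ such paths of length $n$ start at the origin, so the expected number of open self-avoiding paths of length $n$ from the origin is bounded by $(\lambda(2r)^d)^{n+1}(3^d-1)^n$; this vanishes as $n\to\infty$ precisely when $(3^d-1)\lambda(2r)^d<1$, i.e., for $r<\frac{1}{2(\lambda(3^d-1))^{1/d}}$. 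The standard Peierls-type counting of Remark~\ref{rem:perc-standard-arguments}, together with the stationarity of $\Phi$ under $2r\mZ^d$-shifts, then forces the open-site cluster of any vertex to be a.s.\ finite, ruling out percolation of the open-site process and hence of $C(\Phi,r)$.

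The only non-cosmetic step is the geometric reduction ``$C(\Phi,r)$ percolates $\Rightarrow$ the open-site process on $\mL^{*d}(2r)$ percolates'', which is the same implicit ingredient used in Proposition~\ref{thm:k_perc_sub-Poisson_pp}. The cleanest justification is the following book-keeping with half-open cubes: if $\|X_1-X_2\|\le 2r$ and $X_i\in Q_r(z_i)$, then coordinate-wise
\[z_{1,j}-z_{2,j}\in[-r,r)+[-2r,2r]+(-r,r]\subset(-4r,4r),\]
and, being a multiple of $2r$, this forces $|z_{1,j}-z_{2,j}|\le 2r$. Hence the cubes of any two overlapping balls are close-packed adjacent in $\mL^{*d}(2r)$, so every chain of overlapping balls in $C(\Phi,r)$ lifts to a close-packed path of open vertices. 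This is precisely what yields the tight factor $(3^d-1)$ in the claim rather than the cruder $(5^d-1)$ one would obtain without tracking the half-open boundaries.
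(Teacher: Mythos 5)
Your proposal is correct and follows essentially the same route as the paper: the paper also works on $\mL^{*d}(2r)$ with the indicator response $l_r(x,y)=\1[x\in Q_r(y)]$, bounds $\pr{\Phi(Q_r(z_i))\ge 1,\,1\le i\le n}$ by $\EXP{\prod_i\Phi(Q_r(z_i))}\le(\lambda(2r)^d)^n$ via the $\alpha$-weak sub-Poisson hypothesis on the disjoint cubes, and concludes non-percolation for $r<\tfrac12(\lambda(3^d-1))^{-1/d}$ by the path-counting argument of Remark~\ref{rem:perc-standard-arguments}. The only difference is that you explicitly verify the discretization step (overlapping balls lift to close-packed adjacent cubes), which the paper states without proof; your verification is correct.
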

\begin{proof}

We shall use the same method as in the first part of Theorem
\ref{thm:k_perc_sub-Poisson_pp} but just that we will bound the level
crossing probabilities by using the factorial moment measures.
As in Theorem \ref{thm:k_perc_sub-Poisson_pp}, consider the close
packed lattice $\mL^{*d}(2r)$, the response function $l_r(x,y) := \1[x
\in Q_r(y)]$ and the corresponding shot-noise field $V^r_{\Phi}(z)$ on
$\mL^{*d}(2r)$. We know that $C(\Phi,r)$ percolates only if
$\mZ^d_{2r}(V^r_{\Phi},\geq 1)$ percolates. Let us disprove the latter
for $r<
\frac12\frac{1}{(\lambda(3^d-1))^{1/d}}$.
\begin{eqnarray*}
\pr{V^r_{\Phi}(z_i) \geq 1 , 1 \leq i \leq n} & = & \pr{\Phi(Q_r(z_i))
  \geq 1 ,  1 \leq i \leq n}  \\
& \leq & \EXP{\prod_{i=1}^n \Phi(Q_r(z_i)) }\\
& \leq &\EXP{\prod_{i=1}^n \Phi_{\lam}(Q_r(z_i))} \\
& = & 
(\lam (2r)^d)^n
\end{eqnarray*}
We can see that for $r<
\frac12\frac{1}{(\lambda(3^d-1))^{1/d}}$ the set $\mZ^d_{2r}(V^r_{\Phi},\geq 1)$
does not percolate (cf Remark~\ref{rem:perc-standard-arguments}).
This disproves percolation in $C(\Phi,r)$.
\end{proof}

\begin{cor}
\label{cor:phase-transition}
Combining the results of Propositions~\ref{prop:non_perc_joint_int}
and~\ref{prop:Peierls_arg_bd_pp} we have 
$0<\frac12\frac{1}{(\lambda(3^d-1))^{1/d}}\le r_c(\Phi)\le
\sqrt{d}\left(\frac{\log (3^d-2)}{
\lambda}\right)^{1/d}<\infty$
for all  weakly sub-Poisson pp. Examples of such pp are
determinantal pp  with the trace-class integral kernels 
(see Section~\ref{sec:det.perm}).
\end{cor}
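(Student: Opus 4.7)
The plan is very short, because this is essentially a bookkeeping corollary once the two preceding Propositions are in hand. First I would unpack the definition of \emph{weakly sub-Poisson} given in Section~\ref{ss.sub-suuper-Poisson}: by definition, such a point process $\Phi$ of intensity $\lambda$ is simultaneously $\nu$-weakly sub-Poisson (its void probabilities are dominated by those of $\Phi_\lambda$) and $\alpha$-weakly sub-Poisson (its factorial moment measures are dominated by those of $\Phi_\lambda$ on products of disjoint bBs).

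Next I would invoke Proposition~\ref{prop:non_perc_joint_int}, whose hypothesis is precisely $\alpha$-weak sub-Poisson-ness, to obtain the lower bound $r_c(\Phi)\ge \tfrac12(\lambda(3^d-1))^{-1/d}>0$. Then I would apply Proposition~\ref{prop:Peierls_arg_bd_pp}, whose hypothesis is $\nu$-weak sub-Poisson-ness, to get $r_c(\Phi)\le \sqrt d\bigl(\log(3^d-2)/\lambda\bigr)^{1/d}<\infty$. Concatenating the two inequalities yields the sandwich claimed in the statement, and the strict positivity of $\lambda$ together with $d\ge 1$ ensures that the extreme bounds are indeed in $(0,\infty)$, so that the non-trivial phase transition $0<r_c(\Phi)<\infty$ holds uniformly over the class of weakly sub-Poisson point processes of given intensity~$\lambda$.

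For the second assertion, about determinantal point processes with trace-class integral kernels being examples, I would simply point forward to Section~\ref{sec:det.perm}, where it will be shown that such processes satisfy both~(\ref{e.nu-weakly-sub-Poisson}) and~(\ref{e.alpha-weakly-sub-Poisson}); no additional argument belongs in the proof of this corollary. There is no real obstacle here: the only substantive choice is to articulate clearly that the two propositions apply under strictly weaker hypotheses than full $dcx$-domination by $\Phi_\lambda$, so that the conclusion holds at the level of weakly sub-Poisson pp (not only for genuinely sub-Poisson ones), which is what makes determinantal pp accessible as examples.
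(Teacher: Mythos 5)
Your proposal is correct and follows exactly the route the paper intends: the corollary is stated as a direct combination of Propositions~\ref{prop:non_perc_joint_int} and~\ref{prop:Peierls_arg_bd_pp}, whose hypotheses are precisely the two halves ($\alpha$- and $\nu$-weak sub-Poissonianity) of the definition of a weakly sub-Poisson pp, with the determinantal example deferred to Section~\ref{sec:det.perm}. Nothing is missing.
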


\subsection{Percolation in SINR graphs}
\label{sec:perc_SINR}
Study of percolation in the Boolean model $C(\Phi,r)$ was proposed in~\cite{Gilbert61}
to address the feasibility of multi-hop communications in large
``ad-hock'' networks, where full connectivity is typically hard to maintain. 
The Signal-to-interference-and-noise ratio (SINR)
model~(see~\cite{sinr_coverage,Dousse_etal06}~\footnote{The name 
{\em shot-noise germ-grain process} was also suggested by D.~Stoyan in 
his private communication to BB.})
 is more adequate than the Boolean model
in the context of  wireless communication networks as it  allows
one to take into account the {\em interference} intrinsically related to wireless communications.
For more motivation to study SINR model, refer \cite{subpoisson} and the references therein.

We begin with a formal introduction of the SINR graph model.
In this subsection, we shall work only in $\mR^2$. The parameters of the model are non-negative numbers $P$(signal power), $N$(environmental noise), $\gamma$, $T$(SINR threshold) and an attenuation function $\ell :\mR^2_+ \to \mR_+$  satisfying the following assumptions:
$\ell(x,y)  =  l(|x-y|)$ for some continuous function $l : \mR_+ \to
\mR_+$, strictly decreasing on its support, with  $l(0) \geq TN/P$, $l(\cdot) \leq  1$,
and  $\int_0^{\infty} x l(x) \md x  <  \infty$.
These are exactly the assumptions made in~\cite{Dousse_etal06} and we
refer to this paper for a discussion on their validity.

Given a pp $\Phi$, the {\em interference} generated due to
the pp at a location $x$ is defined as the following
shot-noise field
\label{eqn:interference}
$I_{\Phi}(x) := \sum_{X \in \Phi\setminus\{x\}} l(|X-x|)$.
Define the SINR value as follows :
\begin{equation}
\label{eqn:sinr_defn}
SINR(x,y,\Phi,\gamma) :=  \frac{Pl(|x-y|)}{N + \gamma P I_{\Phi\setminus\{x\}}(y)}.
\end{equation}

Let $\Phi_B$ and $\Phi_I$ be two pp. Let $P,N,T > 0$ and $\gamma \geq 0$. The SINR graph is defined as $G(\Phi_B,\Phi_I,\gamma) := (\Phi_B,E(\Phi_B,\Phi_I,\gamma))$ where $E(\Phi_B,\Phi_I,\gamma) := \{ \la X,Y \ra~\in~\Phi_B^2 : SINR(Y,X,\Phi_I,\gamma) > T, SINR(X,Y,\Phi_I,\gamma) > T \}$. The SNR graph(i.e, the graph without interference, $\gamma = 0$) is defined as $G(\Phi_B) := (\Phi_B,E(\Phi_B))$ where $E(\Phi_B) := \{ \la X,Y \ra~\in~\Phi_B^2 : SINR(X,Y,\Phi_B,0) > T \}$.

Observe that the SNR graph $G(\Phi)$ is same as the graph $C(\Phi,r_l)$ with $2r_l = l^{-1}(\frac{TN}{P})$. Also, when $\Phi_I = \emptyset$, we shall omit it from the parameters of the SINR graph. Recall that percolation in the above graphs is existence of an infinite connected component in the graph-theoretic sense.

\subsubsection{Poissonian back-bone nodes}
\label{sec:poisson_sinr}

Firstly, we consider the case when the backbone nodes ($\Phi_B$) form
a Poisson pp and in the next section, we shall relax this
assumption. When $\Phi_B = \Phi_{\lam}$, the Poisson pp of intensity
$\lambda$, we shall use $G(\lam,\Phi_I,\gamma)$ and $G(\lam)$ to
denote the SINR and SNR graphs respectively. Denote by
$\lam_c(r):=\lambda (r_c(\Phi_\lambda)/r)^2$
the {\em critical intensity} for percolation of the Boolean model $C(\Phi_\lam,r)$. The
following result guarantees the existence of a $\gamma > 0$ such that
for any sub-Poisson pp $\Phi = \Phi_I$, $G(\lam,\Phi,\gamma)$ will
percolate provided $G(\lam)$ percolates i.e, the SINR graph percolates
for small interference values when the corresponding SNR graph
percolates.
\begin{prop}
\label{thm:sinr_poisson_perc}
Let $\lam > \lam_c(r_l)$ and $\Phi \leq_{idcx} \Phi_{\mu}$ for some
$\mu > 0$. Then there exists a $\gamma > 0$ such that $G(\lam,\Phi,\gamma)$ percolates.
\end{prop}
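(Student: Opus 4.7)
The plan is to reduce percolation of $G(\lam,\Phi,\gamma)$ for small $\gamma$ to percolation of a perturbation of the Boolean model $C(\Phi_\lam, r')$ for some $r'<r_l$, using the $idcx$ domination $\Phi \leq_{idcx} \Phi_\mu$ together with the Chernoff-type bound of Lemma~\ref{lem:gen_bds} to control the interference shot-noise $I_\Phi$.

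First, I would exploit the slack in the assumption $\lam > \lam_c(r_l)$. By continuity and monotonicity of $\lam_c$ in the connection radius, there exists $r' < r_l$ with $\lam > \lam_c(r')$, so that $C(\Phi_\lam, r')$ percolates a.s. Since $l$ is continuous and strictly decreasing on its support, $\ep := l(2r') - TN/P > 0$, and a direct rearrangement of~(\ref{eqn:sinr_defn}) shows that any two points $X,Y$ of $\Phi_\lam$ with $|X-Y|\le 2r'$ are joined by an edge in $G(\lam,\Phi,\gamma)$ as soon as $I_\Phi(X), I_\Phi(Y) < M := \ep/(\gamma T)$. Note $M\to\infty$ as $\gamma\to 0$, which is exactly the leverage that $\gamma$ small provides.

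Second, I would discretize $\mR^2$ into boxes of side $a$, call a box \emph{bad} if $I_\Phi$ exceeds $M$ somewhere inside it (after a standard continuity argument using $\int x\, l(x)\, dx<\infty$, this reduces to $I_\Phi$ exceeding a slightly smaller threshold at a single reference point), and invoke Lemma~\ref{lem:gen_bds} to control joint bad-box probabilities. For any grid sites $z_1,\ldots,z_n$ and any $s>0$, that lemma yields
\[
\pr{I_\Phi(z_i) \ge M,\; 1 \le i \le n} \le \exp\left\{-snM + \mu \int_{\mR^2}\left(e^{s\sum_{i=1}^n l(|x-z_i|)}-1\right) dx\right\}.
\]
Because the sites are grid-separated and $l$ is integrable, $\sum_i l(|x-z_i|)$ is uniformly bounded in $x$ and in the configuration, so for $s$ small enough the elementary bound $e^{u}-1 \le 2u$ on the relevant range gives an integral contribution of at most $2s n \|l\|_1$. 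This yields a bound of the form $(\exp\{-sM+2s\mu\|l\|_1\})^n$, which for $\gamma$ small is strictly less than $7^{-n}$, the threshold needed for the two-dimensional Peierls argument of Remark~\ref{rem:perc-standard-arguments}.

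Third, the Peierls argument then shows that the bad-box field is subcritical as a site-percolation process on the close-packed grid, so its clusters are a.s. finite. By the independence of $\Phi_\lam$ and $\Phi$, the infinite Boolean cluster of $C(\Phi_\lam, r')$ coexists with this subcritical bad-box field; a standard block/coarse-graining argument (removing bounded bad regions and re-routing around them within the supercritical $r'$-Boolean cluster, as in~\cite{Dousse_etal06}) produces an infinite connected sub-collection of edges $\langle X,Y\rangle$ of $\Phi_\lam$ with $|X-Y|\le 2r'$ whose endpoints lie in good boxes, giving an infinite component of $G(\lam,\Phi,\gamma)$. The main obstacle is this re-routing step: one must ensure the excised bad components can be bypassed within the $r'$-Boolean cluster, which requires the bad-box decay to be fast enough to invoke a renormalization lemma at a coarse block scale, and consequently forces $\gamma$ to be chosen small relative both to the block scale and to the Poisson density $\mu$ controlling $\Phi$.
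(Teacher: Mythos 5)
Your overall strategy coincides with the paper's: you exploit the slack in the supercriticality assumption (you trade it for a smaller connection radius $r'<r_l$, the paper for a slightly larger noise $N'=N+\delta'$; since $2r_l=l^{-1}(TN/P)$ these are the same move), you reduce the problem to showing that the interference level-set $\{x: I_\Phi(x)\le M\}$ is supercritical in the Peierls sense for large $M$, and you obtain the required bound from the first statement of Lemma~\ref{lem:gen_bds}: the $idcx$ domination by $\Phi_\mu$ together with the integrability of $l$ gives $\pr{I_\Phi(z_i)\ge M,\ 1\le i\le n}\le \bigl(e^{-s(M-c\mu)}\bigr)^n$ for a small fixed $s$, which beats the Peierls threshold $(3^2-2)^{-n}=7^{-n}$ once $M=\epsilon/(\gamma T)$ is large, i.e.\ once $\gamma$ is small. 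This is the only genuinely new ingredient relative to \cite{Dousse_etal06}, and you have it right, including the correct appearance of $\mu$ (not $\lambda$) in the constant.

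The gap is in your final combination step. You justify it "by the independence of $\Phi_\lambda$ and $\Phi$", but independence is not assumed: the proposition is stated without it, and the paper explicitly allows $\Phi=\Phi_\lambda\cup\Phi_0$. Moreover, even granting independence, the inference "the infinite cluster of $C(\Phi_\lambda,r')$ coexists with a subcritical bad-box field, hence survives deletion of the bad boxes" is not valid as stated --- a supercritical percolation cluster can in principle be disconnected by an independent subcritical field, and this is exactly the difficulty the paper isolates as the crux. The mechanism that actually works in \cite{Dousse_etal06} is that \emph{both} discrete approximations --- the coarse-grained Boolean model $C(\Phi_\lambda,r')$ and the interference level-set --- individually satisfy Peierls-type bounds of the form ``any $n$ prescribed sites are simultaneously closed with probability at most $\epsilon^n$'' with $\epsilon$ small enough; a site of the combined model is declared open only if it is open for both, and a counting/union-bound argument (over which of the two conditions fails at each site) shows the combined field inherits a bound of the same form, with no independence used anywhere. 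Your quantitative estimate already supplies the interference half of this; the Boolean half requires the standard crossing-probability estimates for the supercritical Poisson Boolean model. Replace the coexistence-by-independence sentence with this joint Peierls bound on the intersection field and the argument closes.
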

Note that we have not assumed the independence of $\Phi$ and
$\Phi_{\lam}$. In particular, $\Phi$ could be $\Phi_{\lam} \cup
\Phi_0$ where $\Phi_0$ is an independent sub-Poisson pp. The case
$\Phi_0 = \emptyset$ was proved in \cite{Dousse_etal06}. Our proof
follows their idea of coupling the continuum model with a discrete
model. As in
\cite{Dousse_etal06}, it is clear that for  $N \equiv 0$, the
above result holds with $\lam_c(r_l) = 0$.
\begin{proof}[Sketch of the proof of Proposition~\ref{thm:sinr_poisson_perc}]
Our proof follows the arguments  given in \cite{Dousse_etal06} and here,
we will only give a sketch of the proof. The details can be found in
in~\cite[Section~6.3.4]{Yogesh_thesis}.

Assuming $\lambda>\lambda_c(\rho_l)$, one observes first
that the graph $G(\lambda)$ also percolates
with any slightly larger constant noise
$N'=N+\delta'$, for some $\delta'>0$.
Essential to the proof of the result
is to show that the
level-set $\{x:I_{\Phi_I}(x)\le M\}$ of the interference
field  percolates (contains an infinite connected component) for sufficiently
large $M$. Suppose that it is true.
Then taking $\gamma=\delta'/M$ one has
percolation of the level-set $\{y:\gamma I_{\Phi_I}(y)\le \delta'\}$.
The main difficulty consists in showing that
$G(\lambda)$ with noise $N'=N+\delta'$
percolates {\em within} an infinite connected component of
$\{y:I_{\Phi_I}(y)\le \delta'\}$. This was done in~\cite{Dousse_etal06},
by mapping both models $G(\lambda)$ and the level-set of the
interference field to a discrete lattice
and showing that both discrete approximations not only percolate
but actually satisfy a stronger condition, related to the Peierls argument.
We follow exactly the same steps and the only fact that we have to
prove,  regarding  the interference, is that there exists a constant
$\epsilon<1$ such that for arbitrary $n\ge1$ and
arbitrary choice of locations $x_1,\ldots,x_n$ one has
$\pr{I_{\Phi_I}(x_i)> M, \, i=1,\ldots, n}\le \epsilon^n$.
In this regard, we use the first statement of Lemma ~\ref{lem:gen_bds}
to prove, exactly as in~\cite{Dousse_etal06}, that for sufficiently small $s$
it is not larger than $K^n$ for some
constant $K$  which depends on $\lam$ but not on $M$.
This completes the proof.
\end{proof}

\subsubsection{Non-Poissonian back-bone nodes}
\label{sec:non_poisson_sinr}

We shall now consider the case when the backbone nodes are formed by a
sub-Poisson pp. In this case, we can give a weaker result, namely that
with an increased signal power (i.e, possibly much greater than the critical
power), the SINR graph will percolate for small interference
parameter $\gamma > 0$.
\begin{prop}
\label{thm:perc_sinr_sub-Poisson}
Let $\Phi$ be a stationary, $\nu$-weakly sub-Poisson pp and $\Phi_I
\leq_{idcx} \Phi_{\mu}$ for some $\mu > 0$ and also assume that $l(x)
> 0$ for all $x \in \mR_+$. Then there exist $P, \gamma > 0$ such
that $G(\Phi,\Phi_I,\gamma)$ percolates.
\end{prop}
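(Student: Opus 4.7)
The plan is to adapt the proof of Proposition~\ref{thm:sinr_poisson_perc} by replacing the role played there by the Poisson backbone (percolating whenever $\lam>\lam_c(r_l)$) with a ``large signal power'' argument based on Proposition~\ref{prop:Peierls_arg_bd_pp}. The overall architecture, following the coupling-to-a-discrete-lattice scheme of~\cite{Dousse_etal06} already employed for Proposition~\ref{thm:sinr_poisson_perc}, is: (i) make the SNR graph percolate with a slightly inflated noise $N':=N+\delta'$; (ii) bound joint upper level-crossings of the interference field on a discrete lattice using Lemma~\ref{lem:gen_bds}; (iii) combine the two via a Peierls-type argument to produce an infinite cluster in $G(\Phi,\Phi_I,\gamma)$ with $\gamma:=\delta'/M$ for $M$ large.

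First I would choose $P$. Let $\lambda$ denote the intensity of $\Phi$. Since $l$ is continuous and strictly positive on $\mR_+$, the function $P\mapsto r_l(P):=l^{-1}(TN/P)/2$ is continuous and tends to $\infty$ as $P\to\infty$. By Proposition~\ref{prop:Peierls_arg_bd_pp}, $r_c(\Phi)\le \tilde C(\lambda)<\infty$, so I can pick $P$ (and then a small $\delta'>0$) such that with $N'=N+\delta'$ and $r'_l(P):=l^{-1}(TN'/P)/2$ we have $r'_l(P)>\tilde C(\lambda)\geq r_c(\Phi)$. In particular, the SNR graph $G(\Phi)$ computed with noise $N'$, which equals the Boolean model $C(\Phi,r'_l(P))$, percolates almost surely.

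Next, I would handle the interference. Since $\Phi_I\le_{idcx}\Phi_\mu$ and $l$ is integrable (because $\int_0^\infty xl(x)\,dx<\infty$), Lemma~\ref{lem:gen_bds} gives for any $s>0$, $n\ge 1$ and any locations $x_1,\dots,x_n\in\mR^2$,
\[
\pr{I_{\Phi_I}(x_i)>M,\ 1\le i\le n}
\;\le\; e^{-snM}\exp\!\left\{\mu\!\int_{\mR^2}\!\bigl(e^{s\sum_{i=1}^n l(|x-x_i|)}-1\bigr)\,\md x\right\}.
\]
Exactly as in the sketch of Proposition~\ref{thm:sinr_poisson_perc} (see also~\cite{Dousse_etal06}), picking $s$ small enough and using the elementary bound $e^{su}-1\le 2su$ for $su\le 1$ lets us extract a constant $K=K(\mu,s)$, independent of $M$, such that the right-hand side is at most $(Ke^{-sM})^n$. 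Hence by taking $M$ large, the probability of $n$ simultaneous upper-level-crossings of the interference shot-noise field can be made smaller than any prescribed $\epsilon^n$.

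Finally, I would carry out the Peierls step on a lattice $r\mZ^2$ of small mesh $r$. Declare a site $z$ \emph{good} if (a) the backbone cell around $z$ yields an edge in the discrete approximation of $C(\Phi,r'_l(P))$, and (b) $\sup_{y\in Q_r(z)}I_{\Phi_I}(y)\le M$ (by domination of $l$ by $l$ at shifted locations, (b) reduces to an event of the type just bounded). Bounding the probability of $n$ simultaneously \emph{bad} sites requires combining the $\nu$-weak sub-Poisson bound on void probabilities of $\Phi$ with the interference estimate above. If $\Phi$ and $\Phi_I$ are independent (the standard setting) the two estimates multiply, and taking $r$ fixed small and $M$ large the combined probability is $<\rho^n$ with $\rho<(3^2-2)^{-1}$, so by Peierls (Remark~\ref{rem:perc-standard-arguments}) the good sites percolate. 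Choosing $\gamma:=\delta'/M$, a good infinite cluster of sites yields an infinite component in $G(\Phi,\Phi_I,\gamma)$, since along it $N+\gamma P I_{\Phi_I}\le N+\delta' P =N'P$ and the SNR condition with noise $N'$ is satisfied.

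The main obstacle is the combination step: the backbone bound in Proposition~\ref{prop:Peierls_arg_bd_pp} and the interference bound from Lemma~\ref{lem:gen_bds} must be glued into a \emph{joint} Peierls estimate. Under the natural independence of $\Phi$ and $\Phi_I$ this is routine via a product bound; without independence one has to either strengthen the hypotheses (e.g.\ assume a joint $dcx$ domination by a product of Poisson fields) or use a union bound with two strictly smaller decay rates so that the Peierls constant still satisfies $\rho<(3^2-2)^{-1}$. In the subsequent write-up I would state the independence of $\Phi$ and $\Phi_I$ explicitly and reduce to the computation already done in Proposition~\ref{thm:sinr_poisson_perc}, which requires essentially only the two ingredients supplied by Propositions~\ref{prop:Peierls_arg_bd_pp} and~\ref{lem:gen_bds}.
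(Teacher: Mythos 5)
Your proposal follows essentially the same route as the paper's own (sketched) proof: increase $P$ so that, by Proposition~\ref{prop:Peierls_arg_bd_pp}, the associated SNR/Boolean model with the enlarged radius $r_l$ percolates with the exponential Peierls-type estimate~(\ref{eqn:Peierl_bd_perc}), then rerun the interference bound of Lemma~\ref{lem:gen_bds} and the discretization scheme of Proposition~\ref{thm:sinr_poisson_perc} to absorb a small $\gamma=\delta'/M$; the paper defers the identical details to the thesis. The one point to fix in your write-up is the independence caveat: the paper explicitly does \emph{not} assume $\Phi$ and $\Phi_I$ independent (the remark after the statement allows $\Phi_I=\Phi\cup\Phi_0$), so you should not add that hypothesis; instead use the subset device you already mention, namely that if $n$ sites are all bad then at least $n/2$ of them are bad for the same one of the two reasons, whence $\pr{\text{all $n$ bad}}\le 2^{n+1}\max(\epsilon_1,\epsilon_2)^{n/2}$, and since both single-reason rates $\epsilon_1$ (backbone, via $P$) and $\epsilon_2$ (interference, via $M$) can be made arbitrarily small, the effective rate $2\sqrt{\max(\epsilon_1,\epsilon_2)}$ drops below $(3^2-2)^{-1}$ with no independence required.
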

As in Theorem \ref{thm:sinr_poisson_perc}, we have not assumed the independence of $\Phi_I$ and $\Phi$. For example, $\Phi_I = \Phi \cup \Phi_0$ where $\Phi$ and $\Phi_0$ are independent sub-Poisson pp.  Let us also justify the assumption of unbounded support for $l(.)$. Suppose that $r = \sup \{x : l(x) > 0 \} < \infty$. Then if $C(\Phi,r)$ is sub-critical, $G(\Phi,\Phi_I,\gamma)$ will be sub-critical for any $\Phi_I,P,\gamma$.
\begin{proof}[Sketch of the proof of Proposition~\ref{thm:perc_sinr_sub-Poisson}]
In this scenario, increased power is equivalent to increased radius in
the Boolean model corresponding to SNR model. From this
observation, it follows from Proposition~\ref{prop:Peierls_arg_bd_pp} that with
possibly increased power the associated SNR model percolates. Then, we
use the approach from the proof of
Proposition~\ref{thm:sinr_poisson_perc} to obtain
a $\gamma > 0$ such that the SINR network percolates as well.
The details can be found in \cite[Section~6.3.4]{Yogesh_thesis}.
\end{proof}
For further  discussion on $dcx$ ordering in the context of
communication networks see~\cite{subpoisson}.

\section{Examples of ordered point processes}
\label{sec:ex}
\hfill\newline
In this section, we will give examples of pp for which
some formal comparison of clustering is possible.
In particular, in Section~\ref{sec:pl}, 
we will give examples of families of perturbed lattice
pp which are proved to be monotone with respect to the $dcx$ order.
In Section~\ref{ss.NumerialPercol}, we will show numerical evidences
supporting the conjecture that  
within this class of pp the critical radius $r_c$ is monotone with
respect to the $dcx$ order on the underlying pp.
It is known that these pp can be considered as toy  models for
determinantal and permanental pp,  which we show in
Section~\ref{sec:det.perm} to be, respectively,
weakly sub- and super-Poisson. 
Poisson-Poisson cluster pp are known to be $dcx$
larger than Poisson pp. An example of such a pp - with
$r_c = 0$ - will  be given in Section~\ref{ss.super-percol}.
This invalidates the conjecture on the monotonicity of 
$r_c$ with respect to the $dcx$ order of pp, in full generality.

\subsection{Perturbed point processes}
\label{sec:pl}
Let $\Phi$ be a pp on $\mR^d$ and $\cN(\cdot,\cdot)$,
$\cX(\cdot,\cdot)$ be two probability kernels from $\mR^d$ to
non-negative integers $\mZ^+$ and $\mR^d$, respectively.
Consider the following {\em independently marked}
version of the pp $\Phi$, $\tilde\Phi^{pert}=\{(X,N_X,{\mathbf Y}_X) \}_{X\in\Phi}$
where given $\Phi$:
\begin{itemize}
\item $N_X$,  $X\in\Phi$ are independent, non-negative
integer-valued random variables with distribution
$\pr{N_X\in\cdot\,|\,\Phi}=\cN(X,\cdot)$,
\item  ${\mathbf Y}_{X}=(Y_{iX} : i =1,2,\ldots)$, $X\in\Phi$ are
  independent vectors of  i.i.d.  elements of  $\mR^d$, with
  $Y_{iX}$'s having the conditional distribution $\pr{Y_{iX}\in\cdot\,|\,\Phi}=\cX(X,\cdot)$,
\item the random elements
$N_X,{\mathbf Y_X}$ are independent for all $X\in\Phi$.
\end{itemize}
Consider the following subset of $\mR^d$
\begin{equation}
\label{defn:perturbed_pp}
\Phi^{pert}= \bigcup_{X \in \Phi} \bigcup_{i=1}^{N_X} \{ X + Y_{iX}\}\,,
\end{equation}
where the inner sum is interpreted as $\emptyset$ when $N_X=0$.
The set $\Phi^{pert}$ can  (and will) be considered as a pp
on $\mR^d$ provided it is locally finite. In what follows, in
accordance with our general assumption for this article, we will assume that the mean measure of $\Phi^{pert}$ is locally finite (Radon measure)
\begin{equation}
\label{e:perturbation-Radon}
\int_{\mR^d} n(x)\cX(x,{A-x})\,  \al(dx)< \infty, \quad
\text{for all bBs~$B\subset\mR^d$},
\end{equation}
where  $\al(\cdot) = \EXP{\Phi(\cdot)}$ stands
for the mean measure of the pp $\Phi$ and $n(x)=\sum_{k=1}^\infty
k\cN(x,\{k\})$ is the mean value of the distribution $\cN(x,\cdot)$.

The pp $\Phi^{pert}$ can be seen as
{\em independently replicating and
translating points from the pp $\Phi$}, with the  number of
replications of the point  $X\in\Phi$ having distribution
$\cN(X,\cdot)$ and the independent  translations of these replicas
from $X$  by vectors having distribution $\cX(X,\cdot)$. For this
reason, we call $\Phi^{pert}$ a {\em perturbation} of~$\Phi$
driven by  the {\em replication kernel} $\cN$ and the {\em translation kernel}
$\cX$.

An important observation for us is that the {\em operation of perturbation of $\Phi$ is $dcx$ monotone with respect to the replication kernel} in the following sense.

\begin{prop}\label{p.pert-lattice}
Consider a pp $\Phi$ with Radon mean measure $\al(\cdot)$
and its two perturbations
$\Phi_j^{pert}$ $j=1,2$ satisfying
condition~(\ref{e:perturbation-Radon}), having the same
translation kernel $\cX$ and possibly different replication kernels
$\cN_j$, $j=1,2$, respectively.
If $\cN(x,\cdot)\leq_{cx}\cN(x,\cdot)$ (convex
ordering of the conditional distributions of the number of replicas)
for $\al$-almost all $x\in\mR^d,$ then
$\Phi^{pert}_1\leq _{dcx} \Phi^{pert}_2$.
\end{prop}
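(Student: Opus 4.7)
The plan is to reduce to mutually disjoint bBs, condition on the parent process $\Phi$, handle each parent's contribution via Lemma~\ref{lem:MeesterSh-like}, and assemble the per-parent bounds using standard closure properties of the $dcx$ order.

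By the observation from~\cite{snorder} recalled just below~(\ref{defn:dcx_rm}), it suffices to establish the inequality for mutually disjoint bBs $B_1,\ldots,B_k$. Fix such a collection, condition on $\Phi=\{X_i\}$, and couple the two perturbations so that they share the same i.i.d.\ translation vectors $\{Y_{m,X_i}\}_{m\geq 1}\sim \cX(X_i,\cdot)$ and differ only through the replication counts $N_{X_i}^{(j)}\sim\cN_j(X_i,\cdot)$. Then the contribution of the parent $X_i$ to $(\Phi_j^{pert}(B_1),\ldots,\Phi_j^{pert}(B_k))$ reads
\[
W_{X_i}^{(j)}:=\sum_{m=1}^{N_{X_i}^{(j)}}V_m^{X_i},\qquad V_m^{X_i}:=\bigl(\1[Y_{m,X_i}\in B_1-X_i],\ldots,\1[Y_{m,X_i}\in B_k-X_i]\bigr),
\]
and the $V_m^{X_i}$ are i.i.d.\ $\{0,1\}^k$-valued random vectors whose components sum to at most one (since the $B_l$ are disjoint).

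Next, apply Lemma~\ref{lem:MeesterSh-like} at each parent. Because $\cN_1(X_i,\cdot)\leq_{cx}\cN_2(X_i,\cdot)$ holds for $\al$-a.e.\ point, and because the $V_m^{X_i}$ are i.i.d.\ and independent of $N_{X_i}^{(j)}$, the lemma delivers the conditional comparison $W_{X_i}^{(1)}\leq_{dcx}W_{X_i}^{(2)}$, given $X_i$. The contributions $\{W_{X_i}^{(j)}\}_i$ are independent across parents conditionally on $\Phi$, so closure of $dcx$ under independent convolutions (cf.~\cite[Thm.~3.12.9]{Muller02}) promotes the per-parent bounds to
\[
\bigl(\Phi_1^{pert}(B_1),\ldots,\Phi_1^{pert}(B_k)\bigr)\,\bigm|\,\Phi\;\leq_{dcx}\;\bigl(\Phi_2^{pert}(B_1),\ldots,\Phi_2^{pert}(B_k)\bigr)\,\bigm|\,\Phi,
\]
for a.e.\ realization of $\Phi$; the Radon condition~(\ref{e:perturbation-Radon}) keeps all components and $dcx$ test-function expectations finite. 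Unconditioning on $\Phi$ is then immediate since $dcx$ comparison is preserved under mixtures, which yields~(\ref{defn:dcx_rm}).

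The principal obstacle is the correct application of Lemma~\ref{lem:MeesterSh-like}: that lemma must supply a multivariate inheritance result roughly of the form ``$N_1\leq_{cx}N_2$ together with i.i.d.\ summands $V_m$ imply $\sum_{m=1}^{N_1}V_m\leq_{dcx}\sum_{m=1}^{N_2}V_m$'', and matching its precise hypotheses to the random-replication-with-i.i.d.-placement structure at each parent is the delicate technical step. Once that lemma is in hand, the reduction to disjoint $B_l$, the conditional independence across parents, and the stability of $dcx$ under convolution and mixing handle the rest routinely.
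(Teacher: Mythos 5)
Your proposal is correct and follows essentially the same route as the paper's proof: coupling the translations, conditioning on $\Phi$, applying Lemma~\ref{lem:MeesterSh-like} to each parent's compound sum, and assembling via closure of $dcx$ under independent superposition and mixing. The only detail you leave implicit is that Lemma~\ref{lem:MeesterSh-like} yields convexity of the function $g$ only on $\mZ$, so the paper additionally invokes Lemma~\ref{lem:discrete-convex} to extend $g$ to a convex function on $\mR$ before applying the hypothesis $\cN_1(x,\cdot)\leq_{cx}\cN_2(x,\cdot)$ --- exactly the ``delicate technical step'' you flag.
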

\begin{proof}
We will consider some particular coupling of the two perturbations
$\Phi^{pert}_j$,  $j=1,2$.
Given  $\Phi$ and ${\mathbf Y}_X=(Y_{iX}:i=1,\ldots)$ for
each  $X\in\Phi$, let $\Phi_{jX} = \bigcup_{i=1}^{N^j_{X}} \{ X + Y_{iX} \}$,
where $N^j_X$ has distribution $\cN_j(X,\cdot)$, $j=1,2$,
respectively.
Thus  $\Phi^{pert}_j=\sum_{X\in\Phi}\Phi_{jX},$ $j=1,2$
are the two considered perturbations.
Note that given $\Phi$, $\Phi^{pert}_j$ can be seen as
independent superpositions of $\Phi_{jX}$ for $X \in \Phi.$ Hence,
by \cite[Proposition 3.2(4)]{snorder} (superposition preserves $dcx$
order) and \cite[Theorem 3.12.8]{Muller02} (weak and $L_1$ convergence
jointly preserve $dcx$ order),
it is enough to show that conditioned on $\Phi$, $\Phi_{1X} \leq_{dcx}
\Phi_{2X}$ for every $X \in\Phi$.
In this regard, given $\Phi$, consider $X\in\Phi$ and let
$B_1,\ldots,B_k$ be mutually disjoint bounded Borel subsets and $f : \mR^k
\to \mR$, a $dcx$ function. Define a real valued function
$g:\mZ \to \mR$, as
$$g(n):= \EXP{f\Bigl(\text{sgn}(n)\sum_{i=1}^{|n|}(\1[Y_{iX} \in B_1 - X],\ldots,
\1[Y_{iX} \in B_k - X])\Bigr) \bigg| \Phi}\,,$$
where $\text{sgn}(n)= \frac{n}{|n|}$ for $n \neq 0$ and $\text{sgn}(0) = 0.$ By Lemma~\ref{lem:MeesterSh-like}, $g(\cdot)$ is a convex function
on~$\mZ$ and by Lemma~\ref{lem:discrete-convex}
it can be extended to a convex function $\tilde g(\cdot)$ on~$\mR$.
Moreover,
$\EXP{\tilde g(N^j_{X}) | \Phi} =\EXP{g(N^j_{X}) | \Phi}\allowbreak =
\sE\Bigl(f(\Phi_{jX}(B_1),\ldots,\Phi_{jX}(B_k)) | \Phi\Bigr)$
for $j=1,2$.
Thus, the result follows from the assumption $N_{X}^1\le_{cx}N_{X}^2$.
\end{proof}

\begin{rem}
The above proof remains valid for an extension
of the perturbation model in which
the distribution $\cX(X,\cdot)$ of the translations $Y_{iX}$
depends not only on the location of the point $X\in\Phi$ but also on the
entire configuration $\Phi$; $\cX(X,\cdot)=\cX(X,\Phi, \cdot)$,
provided condition~(\ref{e:perturbation-Radon}) is replaced by finiteness of
$\int_{\mM^d}\int_{\mR^d}n(x)\cX(x,\phi,A-x)\,C(d(x,\phi))$,
where $C(d(x,\phi))$ is the Campbell measure of $\Phi$.
\end{rem}

Perturbed pp, by Proposition~\ref{p.pert-lattice}, provide
many examples of pp comparable in $dcx$ order.
We will be particularly interested in the following two special cases for the
choice of the initial pp $\Phi$.

\begin{exe}\label{ex.perturbation_of_Poisson}
{\em Perturbation of a Poisson pp.\;}
 Let $\Phi$ be a (possibly
  inhomogeneous) Poisson pp of mean measure $\alpha(dx)$ on $\mR^d$.
Let
$\cN(x,\cdot)=\varepsilon_1={\1(1\in\cdot)}$ be the Dirac measure
on $\mZ^+$ concentrated at~1 for all $x\in\mR^d$ and assume
 an  arbitrary translation kernel $\cX$ satisfying
$\alpha^{pert}(A)=\int_{\mR^d}\cX(x,\allowbreak
A-x)\,\alpha(dx)<\infty$ for all
bBs $A$. Then  by the displacement theorem for Poisson pp,
$\Phi^{pert}$ is also a Poisson pp with mean measure $\alpha^{pert}(dx)$.
Assume {\em any} replication kernel $\cN_2(x,\cdot)$,
with mean number of replications $n_2(x)=\allowbreak\sum_{k=1}^\infty
k\cN_2(x,\{k\})=1$ for all $x\in\mR^d$. Then, by the Jensen's inequality and
Proposition~\ref{p.pert-lattice}, one obtains a super-Poisson pp
$\Phi_2^{pert}$. In the special case, when $\cN_2(x,\cdot)$ is
the Poisson distribution with  mean~1 for all $x\in\mR^d$,
$\Phi_2^{pert}$ is a Poisson-Poisson cluster pp which is
a special case of a Cox (doubly stochastic Poisson) pp with
(random) intensity measure  $\Lambda(A) = \sum_{X \in
  \Phi}\cX(x,A-x)$. The fact that it is super-Poisson was already observed
in~\cite{snorder}. Note that for a general distribution of  $\Phi$,
its perturbation $\Phi_2^{pert}$ is also a Cox pp of the intensity
$\Lambda$ given above.  Other Poisson-Poisson cluster processes, 
with $\cN_2(x,\cdot)$ not necessarily of mean~1, will be considered in
Section~\ref{ss.super-percol}. 
\end{exe}

\begin{exe} \label{ex:perturbed.lattice}
{\em Perturbation of a deterministic lattice.\;}
Assuming a deterministic lattice $\Phi$ (e.g. $\Phi=\mZ^d$)
gives rise to the perturbed lattice pp of the type considered
in~\cite{Sodin04}. Surprisingly enough, starting
from such a $\Phi$, one can also construct a Poisson pp and
both super- and sub-Poisson perturbed pp. In this regard, assume for simplicity that $\Phi=\mZ^d$, and the translation kernel
$\cX(x,\cdot)$ is uniform on the unit cube $[0,1)^d$.
Let $\cN(x,\cdot)$ be the Poisson distribution with mean~$\lambda$ ($Poi(\lambda)$). It is easy to see that such a perturbation $\Phi^{pert}$ of the lattice $\mZ^d$ gives rise to a homogeneous Poisson pp with intensity~$\lambda$.
\begin{description}
\item[{\rm \em Sub-Poisson perturbed lattices.}]
Assuming for  $\cN_1$ some distribution convexly ($cx$)  smaller than $Poi(\lambda),$
one obtains a sub-Poisson perturbed lattice pp.
Examples are {\em hyper-geometric} $H\!Geo(n,m,k)$,
$m,k\le n$, $km/n=\lambda$
and {\em binomial}  $Bin(n,\lambda/n)$, $\lambda\le n$ distributions%
~\footnote{$Bin(n,p)$ has probability mass function
  $p_{Bin(n,p)}(i)={n\choose i}p^i(1-p)^{n-i}$ ($i=0,\ldots,n$).
$H\!Geo(n,m,k)$ has probability
  mass function $p_{H\!Geo(n,m,k)}(i)={m\choose i}{n-m\choose
    k-i}/{n\choose k}$ ($\max(k-n+m,0)\le i\le m$).},
which can be ordered as follows:
$$H\!Geo(n,m,\lambda n/m)\le_{cx} Bin(m,\lambda/m)\le_{cx}
Bin(r,\lambda/r)\le_{cx} Poi(\lambda)$$
for $\lambda\le m\le \min(r,n)$;
cf.~\cite{Whitt1985}%
\footnote{One shows the
logarithmic concavity of the ratio of the respective probability mass
functions, which  implies increasing convex order and, consequently,
$cx$ provided the distributions have the same  means.}.
Specifically, taking  $\cN_1(x,\cdot)$ to be Binomial
$Bin(n,\lambda/n)$ for
$n\ge\lambda$, one obtains
a $dcx$ monotone increasing family of sub-Poisson pp.
Taking $\lambda=n=1$ (equivalent to $\cN(x,\cdot)=\varepsilon_1$),
one obtains a {\em simple perturbed lattice} that is $dcx$ smaller
than the Poisson pp of intensity~1.
A sample realization of this latter process (with $\Phi$ being the unit
hexagonal lattice on the plane rather than the square lattice)
is shown in Figure~\ref{f.Lattice1}.

\item[{\rm \em Super-Poisson perturbed lattices.}]
Assuming for  $\cN_2$ some distribution convexly larger than $Poi(\lambda),$
one obtains a super-Poisson perturbed lattice. Examples are
{\em negative binomial} ${N\!Bin}(r,p)$ distribution
with $rp/(1-p)=\lambda$, {\em geometric} $Geo(p)$ distribution with
$1/p-1=\lambda$;~%
\footnote{$p_{Geo(p)}(i)=p^i(1-p)^{n-i}$,  $p_{N\!Bin(r,p)}(i)=
{r+i-1\choose i}p^i(1-p)^r$.}
with
\begin{eqnarray*}
Poi(\lambda)&\le_{cx}& N\!Bin(r_2,\lambda/(r_2+\lambda))
\le_{cx}N\!Bin(r_1,\lambda/(r_1+\lambda))\\
&\le_{cx}& Geo(1/(1+\lambda))
\le_{cx}\sum_{j}\lambda_j\;Geo(p_j)
\end{eqnarray*}
with $r_1\le r_2$,
$0\le \lambda_j\le 1$, $\sum_j\lambda_j=1$ and
$\sum_j\lambda_j/p_j=\lambda+1$,
where the largest distribution above is a mixture
of geometric distributions having mean~$\lambda$;
cf.~\cite{Whitt1985}.
Specifically, taking  $\cN_2(x,\cdot)$ to be negative binomial
$N\!Bin(n,\lambda/(n+\lambda))$ for
$n=1,\ldots$ one obtains
a $dcx$ monotone decreasing  family of super-Poisson pp.
Recall that $N\!Bin(r,p)$
is  a mixture of $Poi(x)$  with parameter $x$ distributed
as a gamma distribution with scale parameter $p/(1-p)$
and shape parameter $r$.
\end{description}
\end{exe}

\begin{rem}
From~\cite[Lemma~2.18]{MeesterSh93}, we know that any {\em  mixture of Poisson distributions} having mean $\lambda$ is $cx$ larger than $Poi(\lambda)$.
Thus, the super-Poisson perturbed lattice with such a replication kernel
(translation kernel being the uniform distribution on the unit cube) again gives rise to a Cox pp. A special case of such a Cox pp
with $\cN_2$ being a mixture of two Poisson distributions
was considered in~\cite{snorder} (and called Ising-Poisson cluster pp). However, the proof of the fact that it is super-Poisson was
based on the observation that the (random) density of this Cox pp is
a conditionally increasing field. This argument can be
extended to the case when the replication  marks $N_X$, $X\in\Phi$
constitute a field that is {\em 1-monotonic} (~\cite[Ch.~2]{Grimmett2006}).
Due to space constraints, we refer to~\cite[Ch.~5]{Yogesh_thesis} for the details. A sample realization of the Cox pp obtained by the analogous perturbation of the hexagonal lattice on the plane with $\cN_2$ being Bernoulli $Bin(1,1/5)$ mixture of $Poi(5i)$ for  $i=0,1$ is shown on  Figure~\ref{f.Lattice1}.
\end{rem}

Our interest in  sub-Poisson perturbed lattices stems from their relations to zeros of Gaussian analytic functions (GAFs) (see~\cite{Peres05,Sodin04}). More precisely, \cite{Sodin06} shows that zeros of GAFs have the same distribution as the pp $\bigcup_{z \in \mZ^d}\{z+X_z\}$ for a $\mZ^2$-shift invariant sequence $\{X_z\}_{z \in \mZ^2}$. Simulations and second-moment properties (\cite{Peres05}) indicate that the zero set of GAFs exhibit less clustering (more spatial homogeneity) than a Poisson pp. The above example when seen in the light of the above-mentioned
papers, asks the question that  we are currently not able to answer, whether zeros of GAF are sub-Poisson.

We have seen above two examples of Cox pp (being some perturbations of a Poisson pp or a lattice) that are super-Poisson. A general class of Cox pp called the L\'{e}vy-based Cox pp were proved to be super-Poisson in~\cite{snorder}.

\subsubsection{Numerical comparison of percolation for
perturbed lattices}
\label{ss.NumerialPercol}
\begin{figure}[!t]
\begin{minipage}[b]{0.8\linewidth}
\vspace{-10ex}
\includegraphics[width=1.\linewidth]{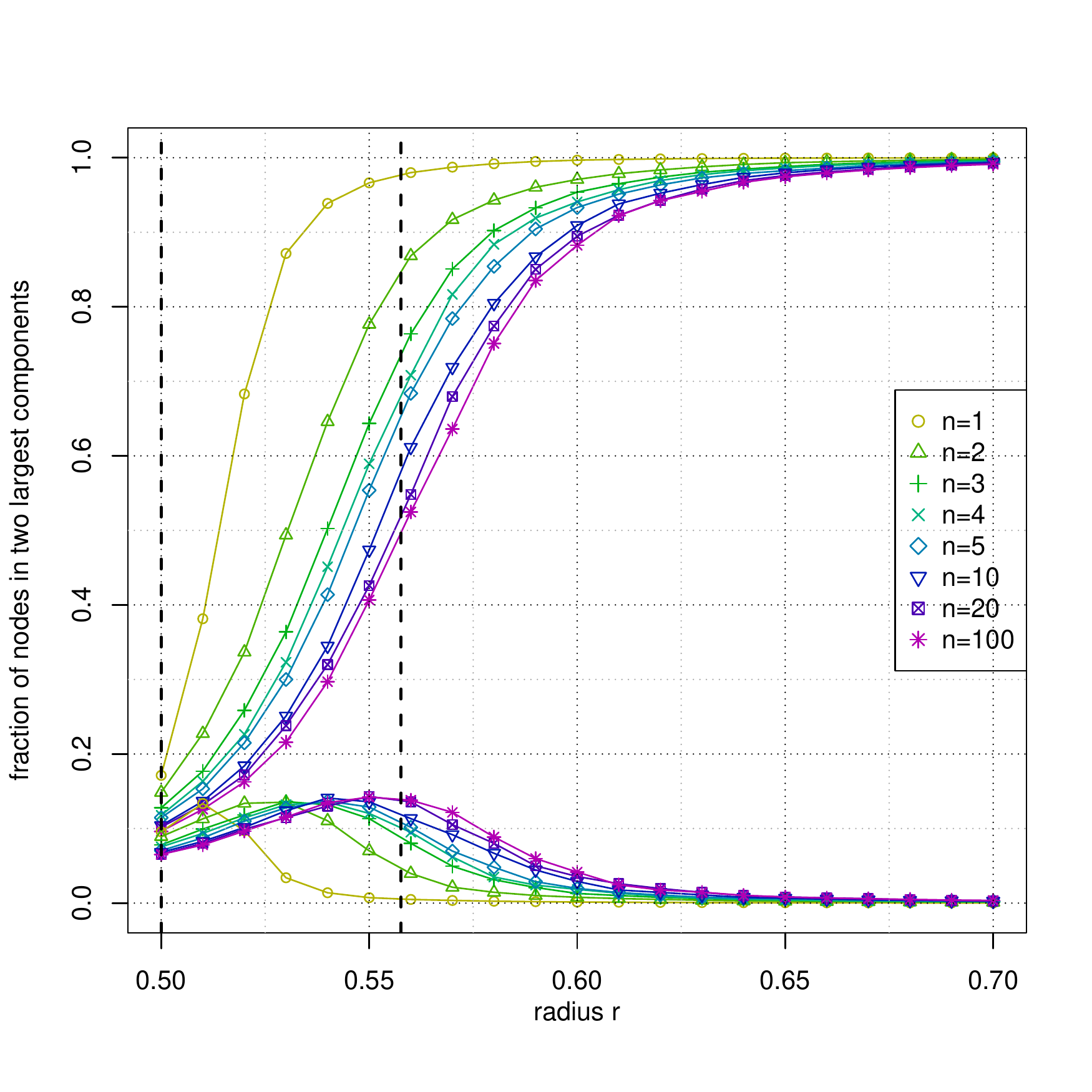}
\end{minipage}
\vspace{-6ex}
\caption{\label{f.TwoComponents.subP}
Mean fractions of nodes in the two largest components
of the sub-Poisson Boolean model  $C(\Phi^{pert}_{Bin}(n),r)$ as a function of $r$;
see Section~\ref{ss.NumerialPercol}. The underlying pp
$\Phi^{pert}_{Bin}(n)$ are $dcx$
increasing in $n$ to the Poisson pp $\Phi_\lambda$
of intensity $\lambda=2/(\sqrt3)=1.154701$.
The right dashed vertical line corresponds
to the radius $r=0.5576495$ which is believed to be close 
to the critical radius $r_c(\Phi_\lambda)$
for the Poisson pp.
The left  dashed vertical line correspond to the
critical radius $r_c(\Phi)=0.5$ for the non-perturbed lattice.}
\end{figure}

\begin{figure}[!t]
\begin{center}
\hbox{
\hbox{}\hspace{0.01\linewidth}
\begin{minipage}[b]{0.22\linewidth}
\includegraphics[width=1\linewidth]{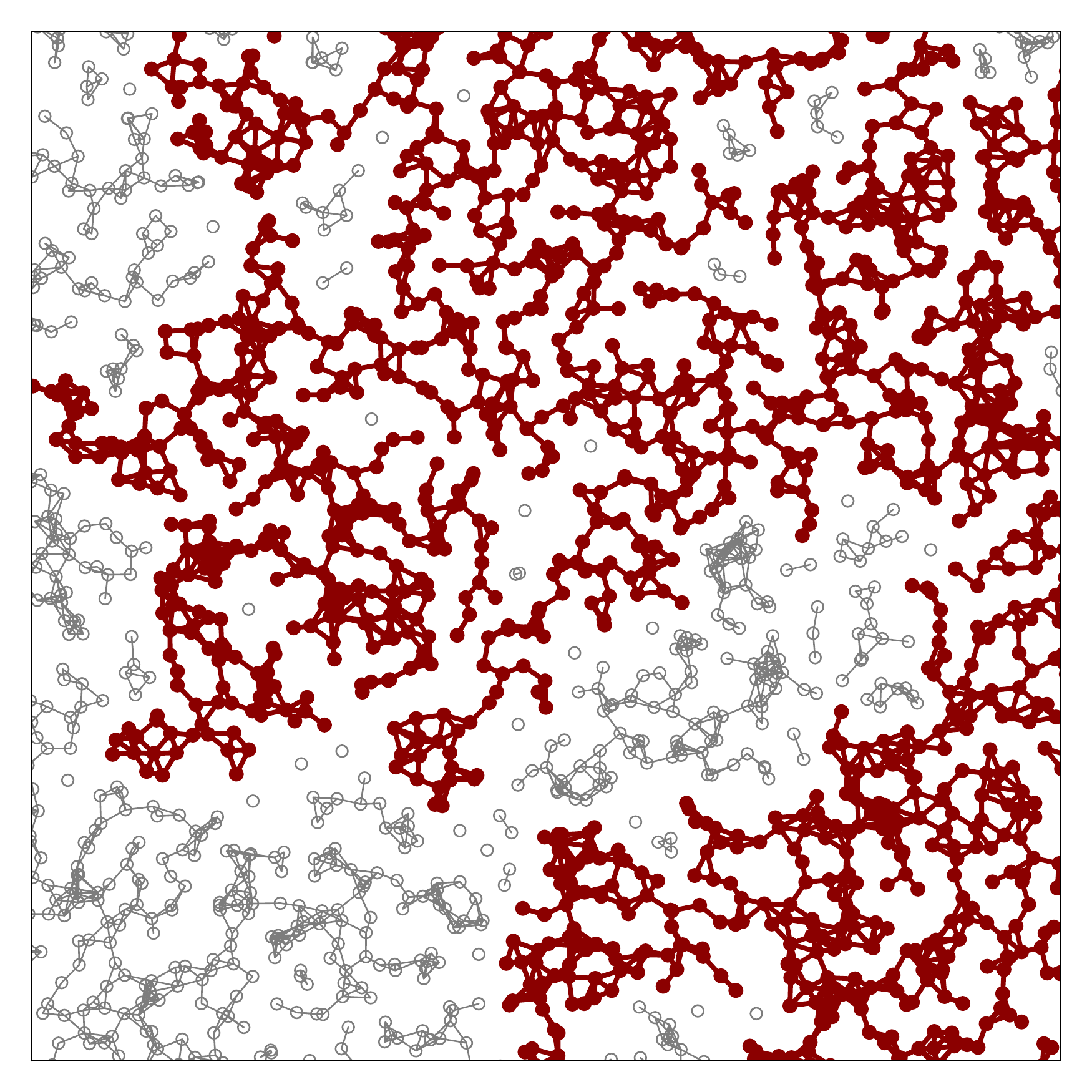}\\[-0.6\linewidth]
\hbox{}\hspace{-0.16\linewidth}
\includegraphics[width=0.70\linewidth]{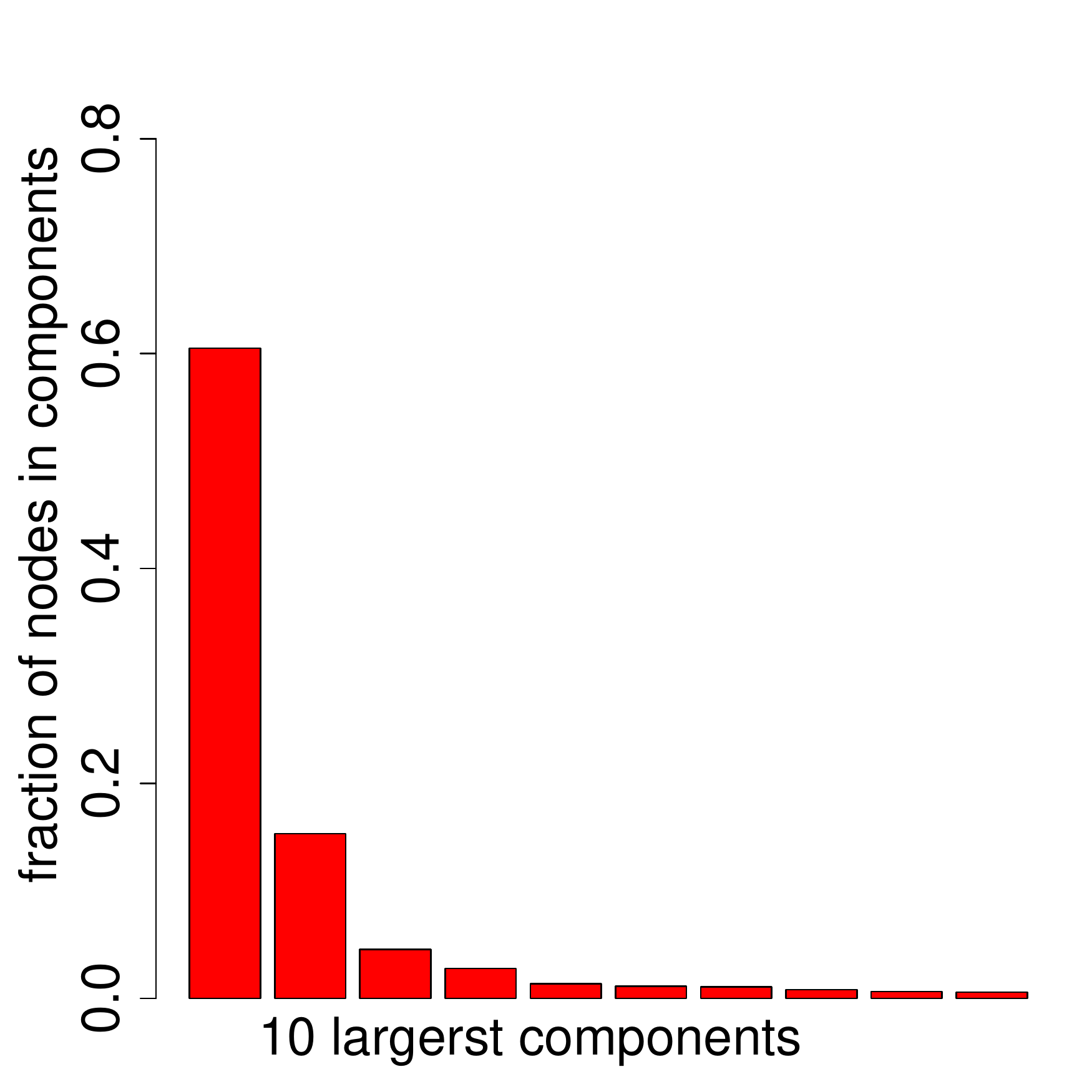}\\
\centerline{$n=5$}
\end{minipage}
\hspace{0.02\linewidth}
\begin{minipage}[b]{0.22\linewidth}
\includegraphics[width=1\linewidth]{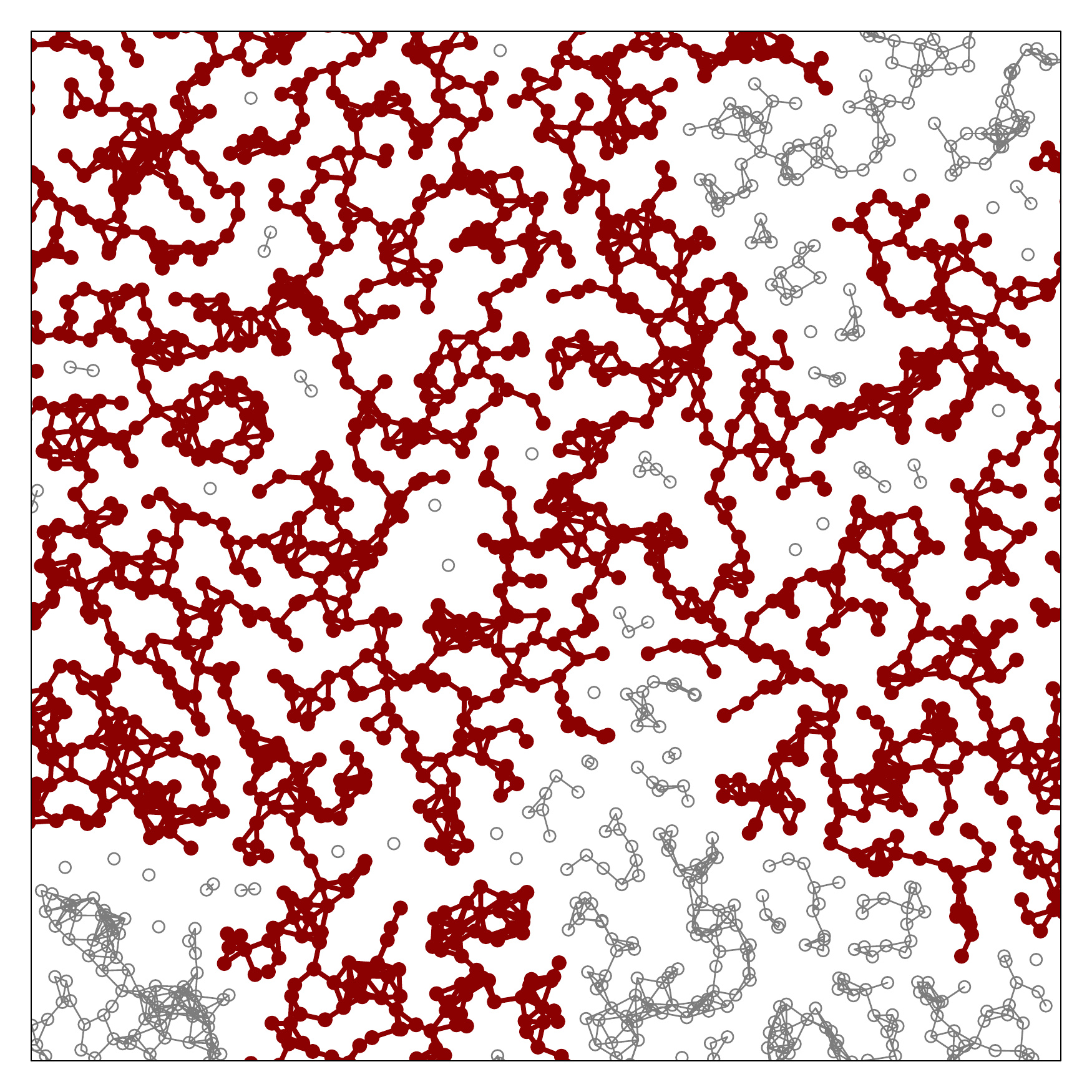}\\[-0.6\linewidth]
\hbox{}\hspace{-0.16\linewidth}
\includegraphics[width=0.70\linewidth]{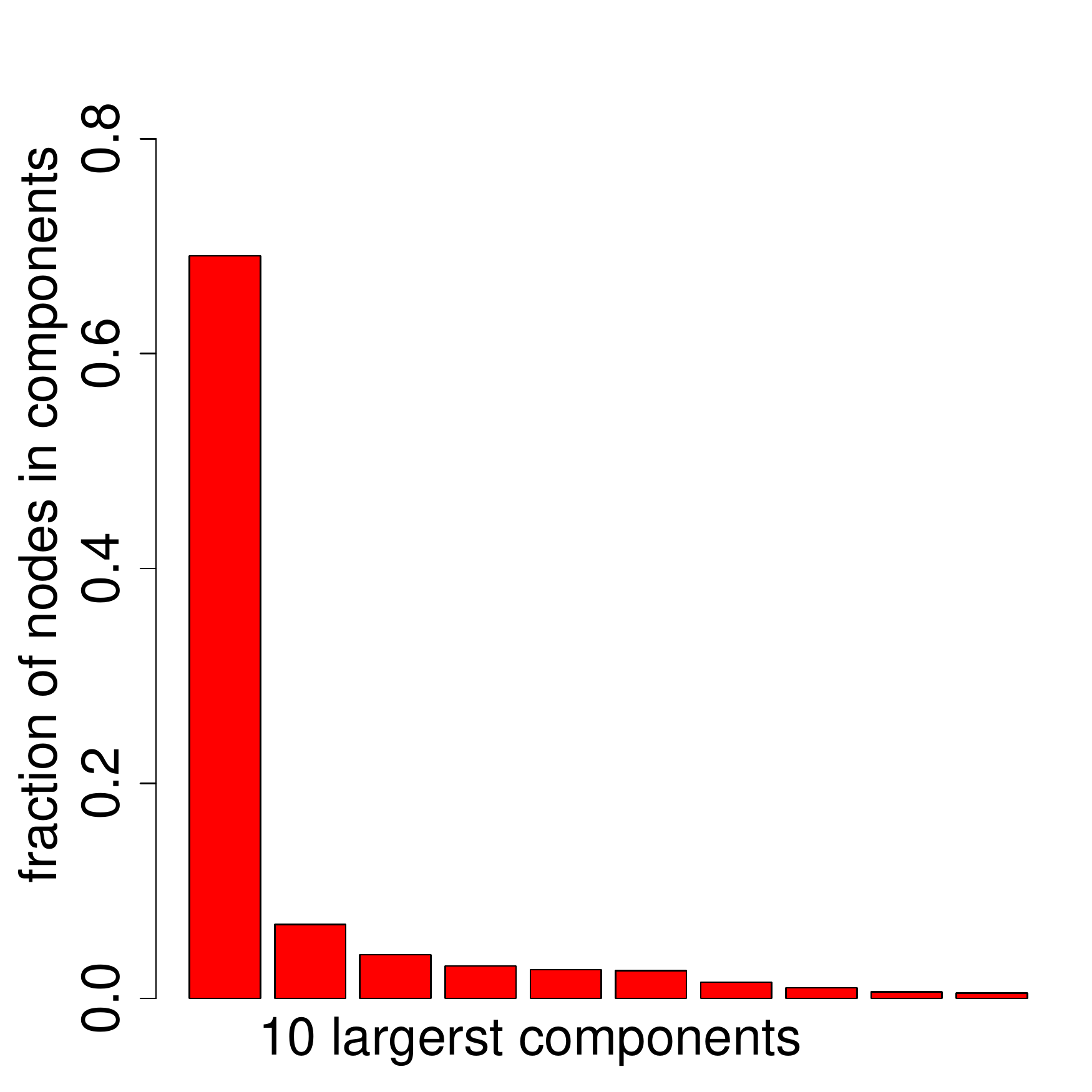}\\
\centerline{$n=6$}
\end{minipage}
\hspace{0.02\linewidth}
\begin{minipage}[b]{0.22\linewidth}
\includegraphics[width=1\linewidth]{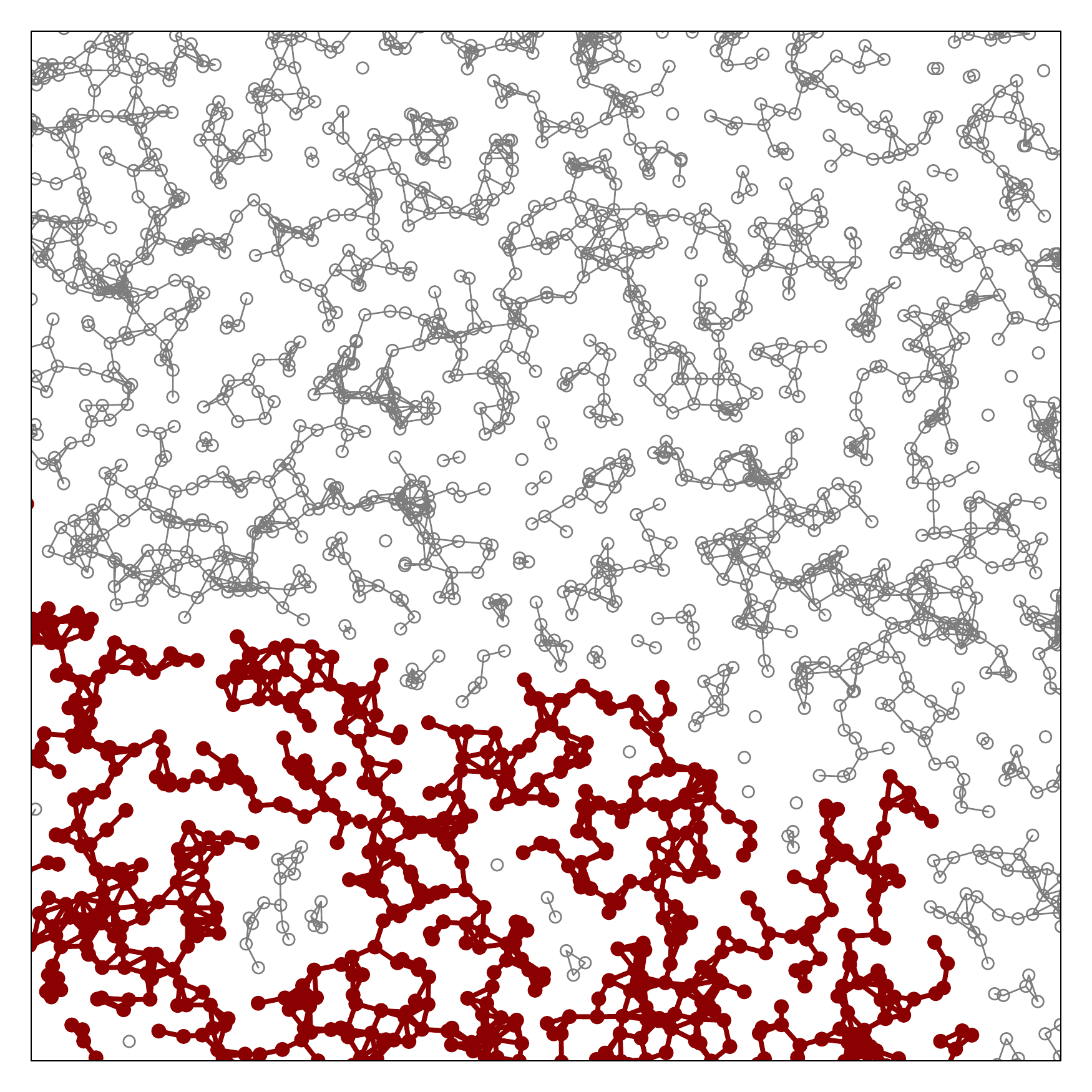}\\[-0.6\linewidth]
\hbox{}\hspace{-0.16\linewidth}
\includegraphics[width=0.70\linewidth]{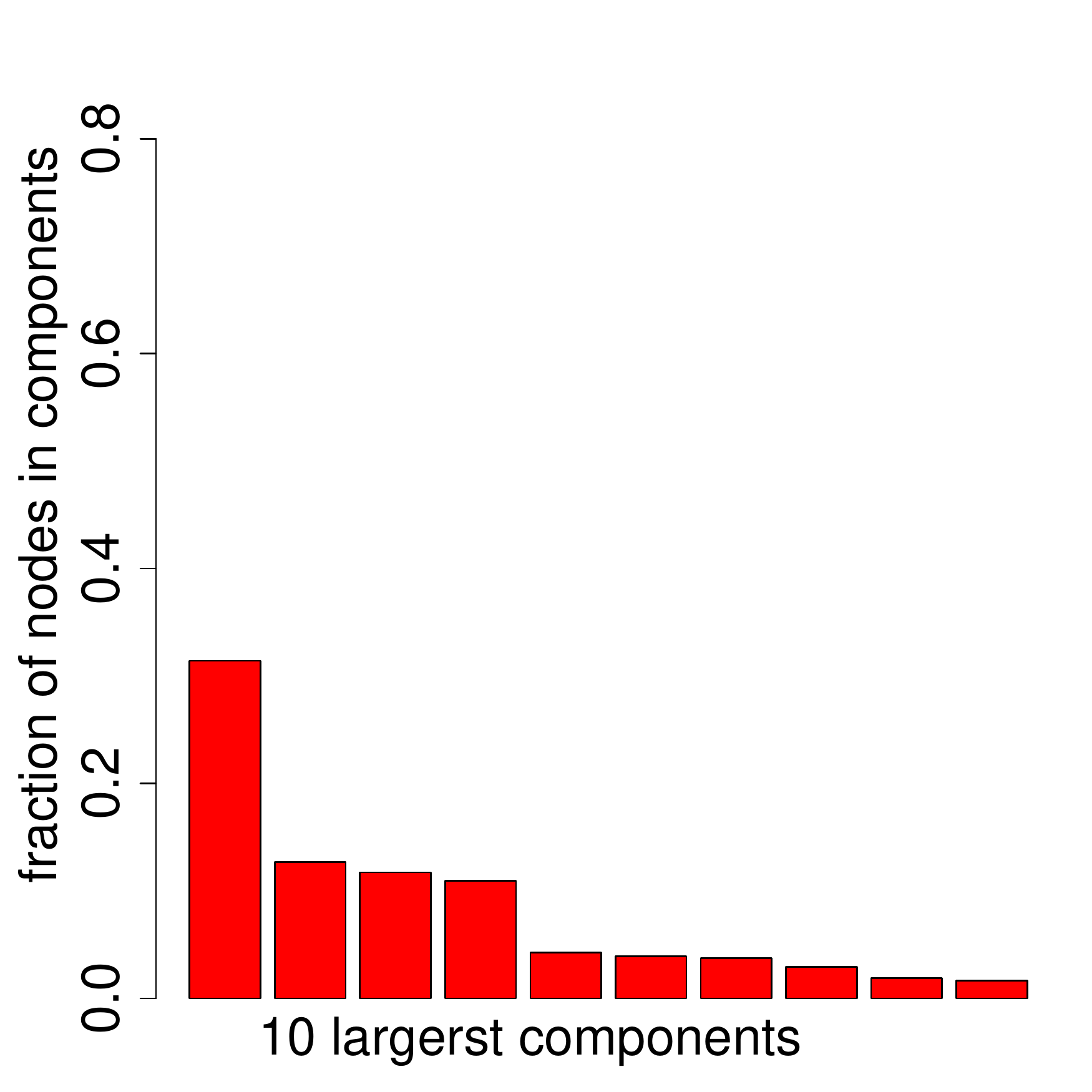}\\
\centerline{$n=7$}
\end{minipage}
\hspace{0.02\linewidth}
\begin{minipage}[b]{0.22\linewidth}
\includegraphics[width=1\linewidth]{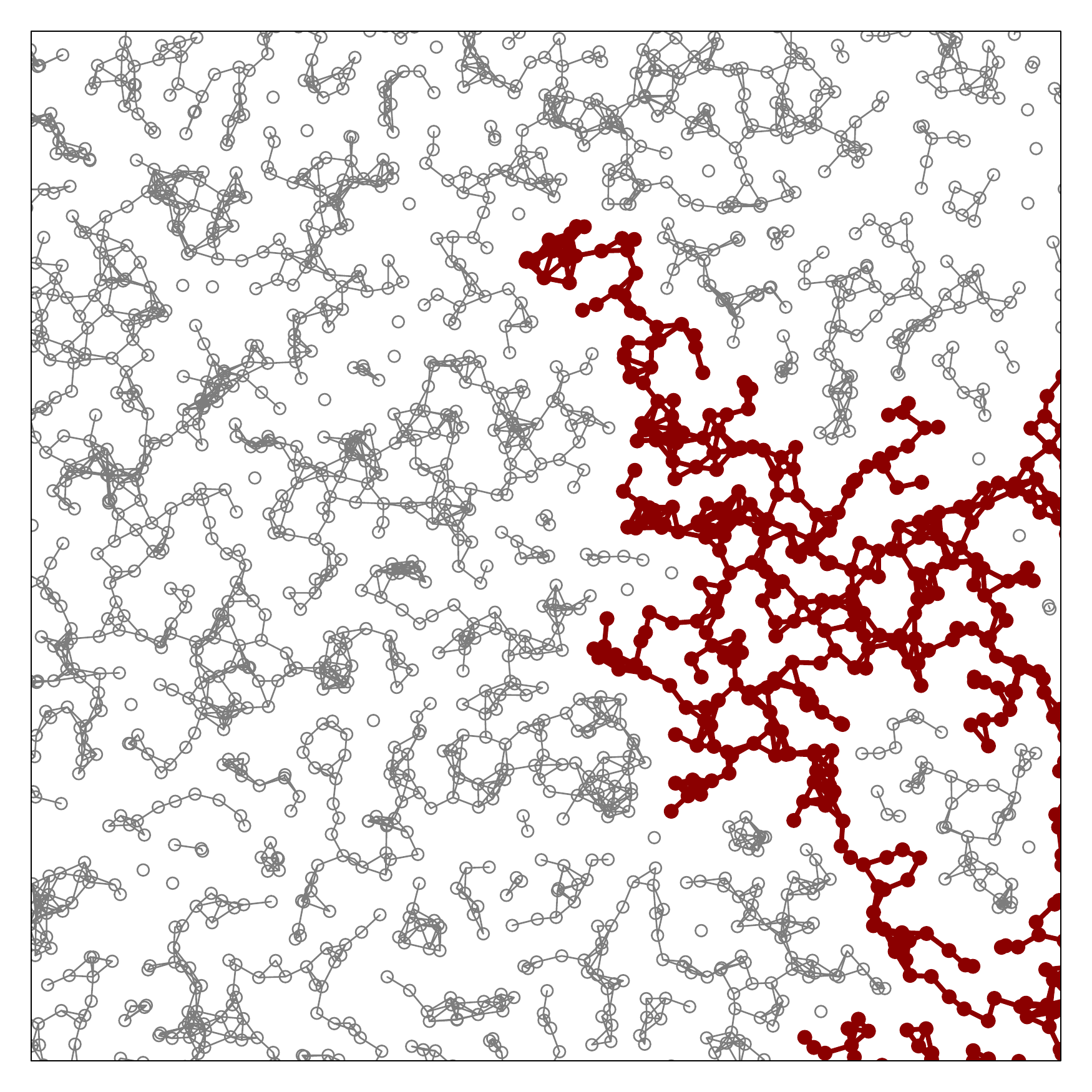}\\[-0.6\linewidth]
\hbox{}\hspace{-0.16\linewidth}
\includegraphics[width=0.70\linewidth]{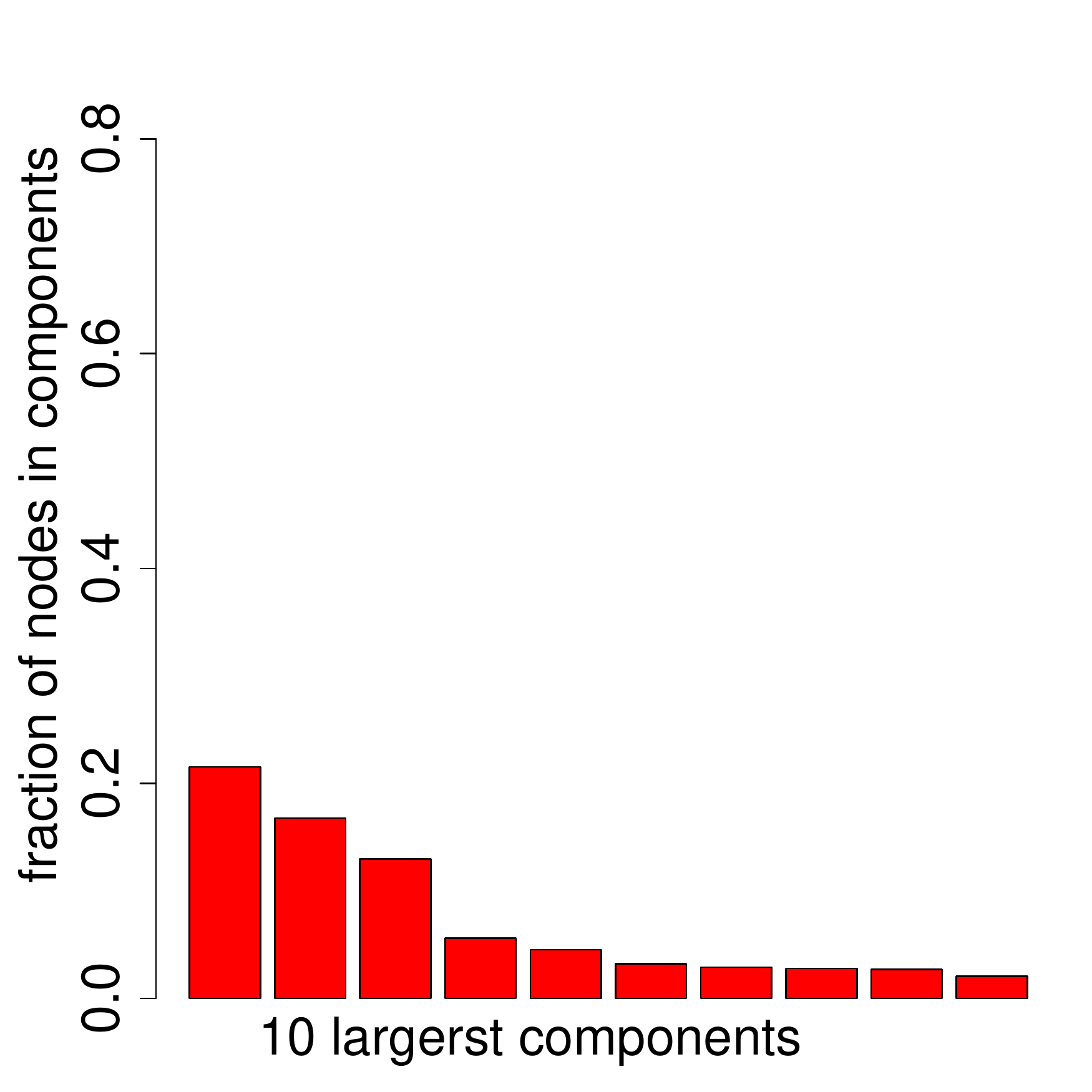}\\
\centerline{$n=8$}
\end{minipage}}
\end{center}
\vspace{-6ex}
\caption{\label{f.PhaseTran_in_n}
Illustration of the
phase transition in the clustering parameter $n$ for the
sub-Poisson Boolean model $C(\Phi^{pert}_{Bin}(n),r)$ with $r=0.55$;
see Section~\ref{ss.NumerialPercol}.
The largest component in the simulation window is highlighted.
Bar-plots show the fraction of nodes in ten largest components.
}
\end{figure}

\begin{figure}[!t]
\begin{center}
\begin{minipage}[b]{0.8\linewidth}
\vspace{-10ex}
\includegraphics[width=1.\linewidth]{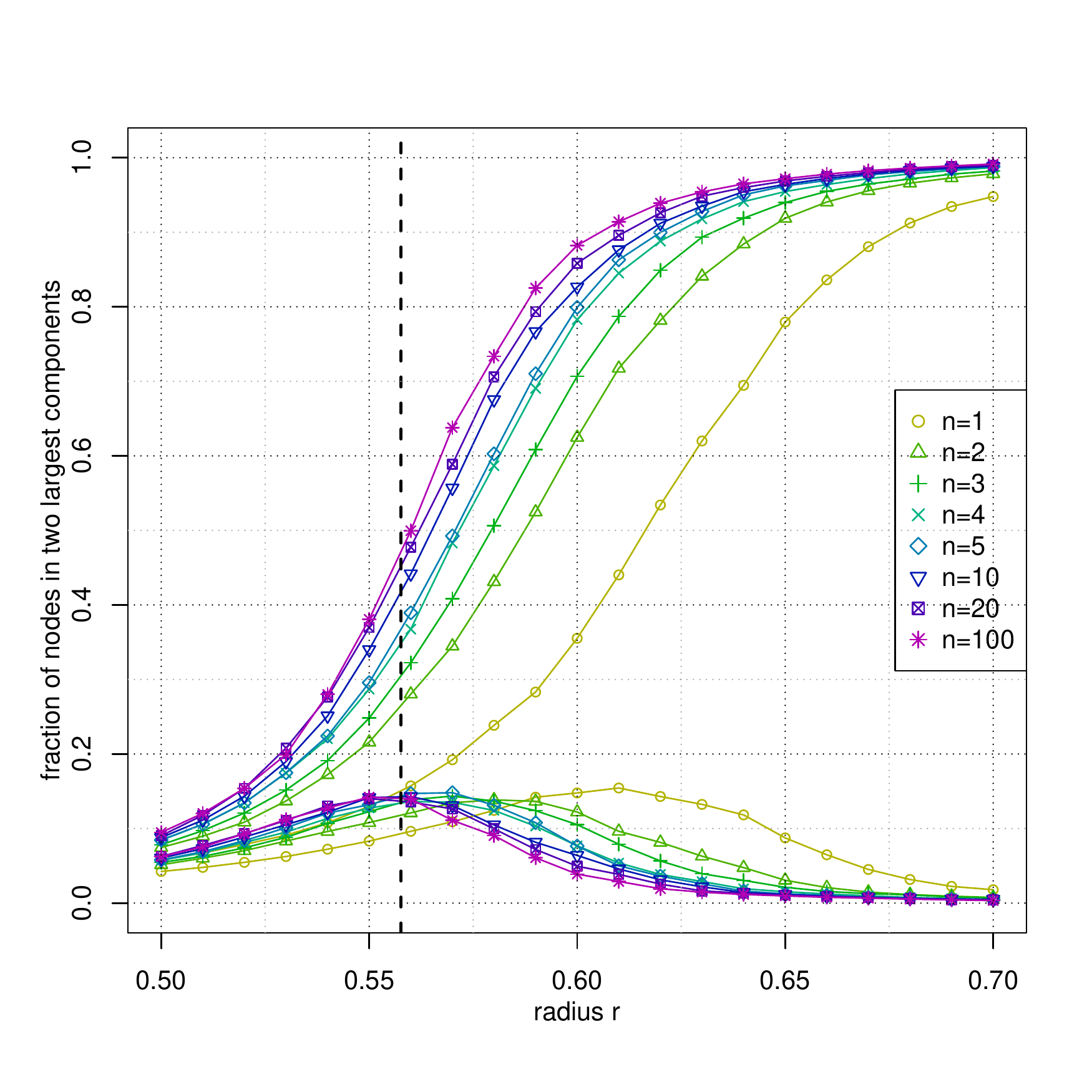}
\end{minipage}
\end{center}
\vspace{-6ex}
\caption{\label{f.TwoComponents.superP}
Mean fractions of nodes in two largest components
of  super-Poisson Boolean model  $C(\Phi^{pert}_{N\!Bin}(n),r)$ in function of $r$;
see Section~\ref{ss.NumerialPercol}. The underlying pp
$\Phi^{pert}_{N\!Bin}(n)$ are $dcx$
decreasing in $n$ to Poisson pp $\Phi_\lambda$
of intensity $\lambda=2/(\sqrt3)=1.154701$.
}
  \end{figure}

\begin{figure}[!t]
\begin{center}
\begin{minipage}[b]{1\linewidth}
\vspace{-10ex}
\includegraphics[width=1.\linewidth]{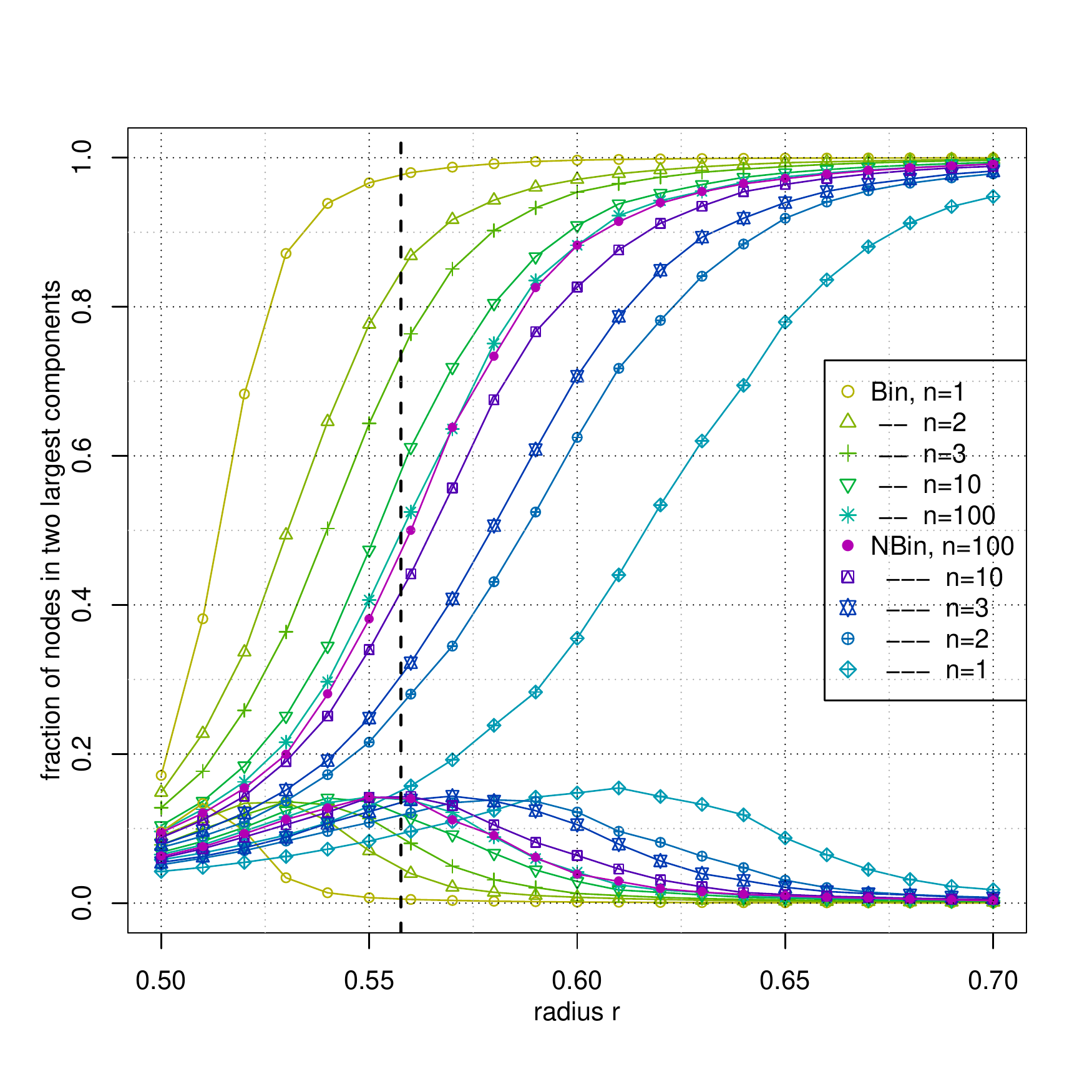}
\end{minipage}
\end{center}
\vspace{-6ex}
\caption{\label{f.TwoComponents.subsuperP}
{From} sub- to super-Poisson. Figures~\ref{f.TwoComponents.subP}
and~\ref{f.TwoComponents.superP} together.}
  \end{figure}

Consider Boolean model $C(\Phi,r)$ with spherical grains of
radius~$r$. Recall that
the critical radius $r_c=r_c(\Phi)$
is the smallest radius $r$ such that  $C(\Phi,r)$  percolates with
positive probability. In what follows, we will show some example of
simulation results  supporting the hypothesis that
$\Phi_1\le_{dcx}\Phi_2$ implies $r_c(\Phi_1)\le r_c(\Phi_2)$.

Consider two families of $dcx$ ordered pp $\Phi^{pert}$ constructed
as perturbations of the hexagonal lattice $\Phi$
(of inter-node distance~1) with
node translation kernel $\cX(x,\cdot)$
being uniform within the hexagonal cell of $x\in\Phi$
and different replication kernels $\cN(x,\cdot)$ (see
Example~\ref{ex:perturbed.lattice}).
Specifically,  assume {\em binomial} $Bin(n,1/n)$
and {\em negative binomial} $N\!Bin(n,1/(1+n))$  distribution
for $\cN(x,\cdot)$ with $n\ge 1$. Recall from
Example~\ref{ex:perturbed.lattice} that the former assumption leads to
$dcx$ increasing in $n$ family of sub-Poisson
pp $\Phi^{pert}=\Phi^{pert}_{Bin}(n)$ converging to Poisson
pp (of intensity $\lambda=2/(\sqrt3)=1.154701$) when $n\to\infty$,
while the latter assumption leads to $dcx$ decreasing  family
of super-Poisson pp  $\Phi^{pert}=\Phi^{pert}_{N\!Bin}(n)$ converging
in $n$ to the same Poisson pp. The critical radius $r_c(\Phi_\lambda)$ 
for this Poisson pp is known to be close to the value $r=0.5576495$;
\footnote{Two dimensional Boolean model with fixed
grains of radius  $r=0.5576495$ and Poisson pp of germs of intensity 
$\lambda=2/(\sqrt3)=1.154701$ has volume fraction 
$1-e^{-\lambda\pi r^2}=0.6763476$, which is given 
in~\cite{Quintanilla2007} as an estimator of the critical value 
for the percolation of the Boolean model. See also bound 
given in~\cite{Balister2005}.}.

In order to get an idea about the critical radius, we have simulated
300 realizations of the Boolean model $C(\Phi^{pert},r)$ for $r$ varying
from $r=0.5$ to $r=0.7$ in the
square window $[0,50]^2$.
The fraction of nodes in the two largest components
in the window  was calculated for each realization of the model for
each $r$ and the obtained results were averaged over 300
realizations of the model.
The resulting  {\em mean fractions of nodes in the two
  largest components} as a function of $r$ are plotted
on Figures~\ref{f.TwoComponents.subP} and~\ref{f.TwoComponents.superP}
for binomial (sub-Poisson) and negative binomial (super-Poisson)
pp, respectively. The two families are compared in
Figure~\ref{f.TwoComponents.subsuperP}.
The obtained curves support the hypothesis that the
clustering of the pp of germs negatively impacts the percolation
of the corresponding Boolean models.
Moreover, Figure~\ref{f.PhaseTran_in_n}
illustrates  what could be called the
``phase transition for percolation in the clustering parameter'' $n$
for $\Phi^{pert}_{Bin}(n)$.

\subsection{Super-Poisson point process with a trivial percolation phase transition}
\label{ss.super-percol}
The objective of this section is to show examples of highly clustered 
and well percolating pp. More precisely we show examples of
Poisson-Poisson cluster pp of arbitrarily small intensity, 
which are super-Poisson, and which 
percolate for arbitrarily small radii.

\begin{exe}[Poisson-Poisson cluster pp with annular clusters]
\label{ex.PoPoClust} 
Let $\Phi_\alpha$ be the Poisson pp of intensity $\alpha$ on the plane
$\mR^2$; we call it the process of cluster centers.
Consider its perturbation $\Phi_\alpha^{R,\delta,\mu}$
(cf. Example~\ref{ex.perturbation_of_Poisson}) with the following 
translation and replication kernels.
Let $\cX(x,\cdot)$ be the uniform distribution on the annulus
$B_O(R)\setminus B_O(R-\delta)$ centered at $x$ of inner and
outer radii $R-\delta$ and $R$ respectively, for some
$0<\delta\le R<\infty$; see Figure~\ref{f.PoPoClust}. 
Let $\cN(x,\cdot)$ be the Poisson distribution $Poi(\mu)$.
The process $\Phi_\alpha^{R,\delta,\mu}$ is a Poisson-Poisson
cluster pp; i.e., a Cox pp with the random intensity measure
$\Lam(\cdot) := \mu \sum_{X \in \Phi_{\alpha}} \cX(x,\cdot-x)$.  
By~\cite[Proposition~5.2]{snorder}, it is 
a super-Poisson pp.  More precisely,
$\Phi_{\lambda}\le_{dcx}\Phi_\alpha^{R,\delta,\mu}$, where
$\Phi_\lambda$ is homogeneous Poisson pp of intensity $\lambda=\alpha\mu$.
\end{exe}

\begin{figure}[!t]
\begin{center}
\includegraphics[width=1.\linewidth]{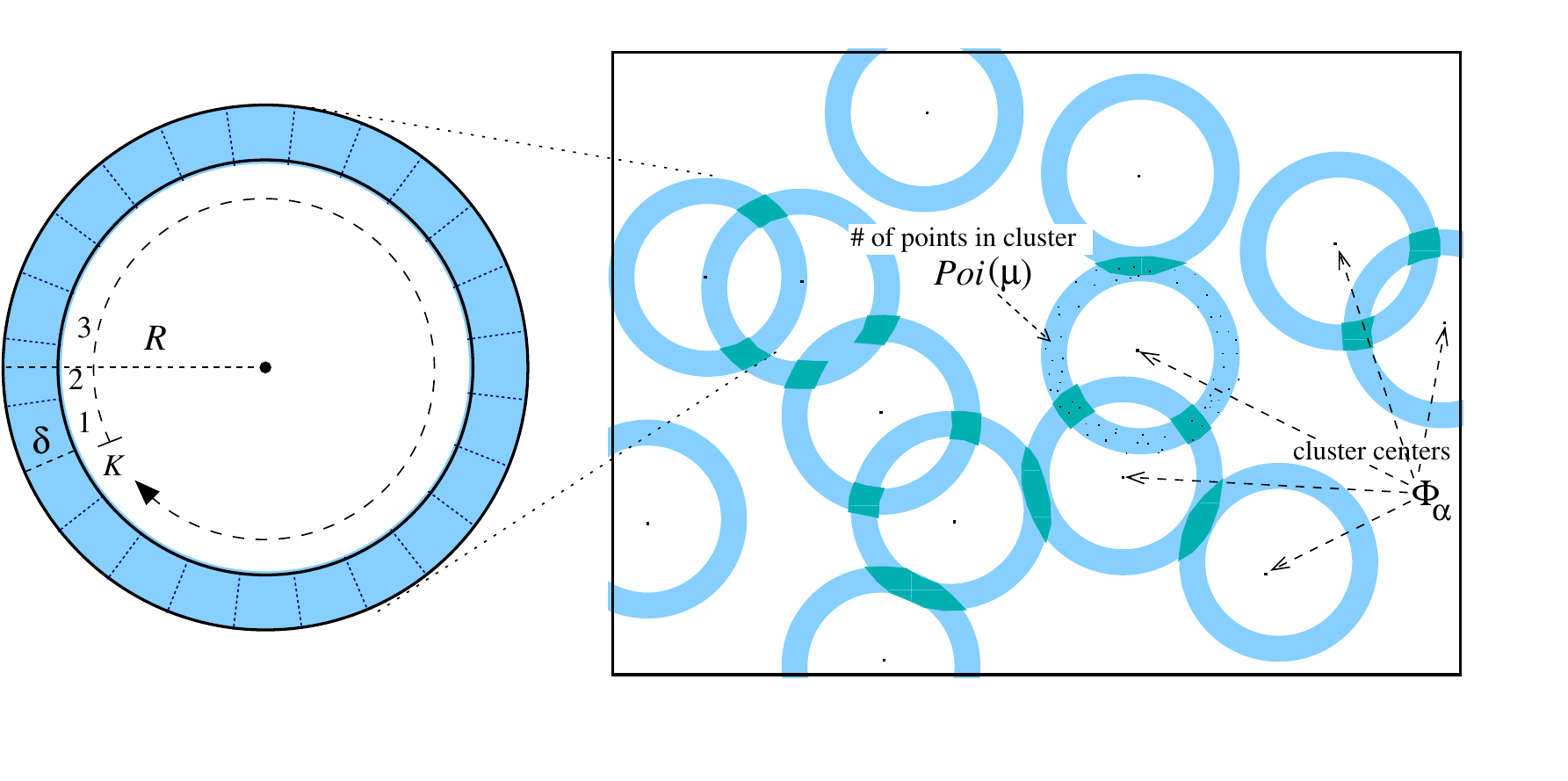}
\end{center}
\vspace{-7ex}
\caption{\label{f.PoPoClust}
Poisson-Poisson cluster process of annular cluster;
cf. Example~\ref{ex.PoPoClust}.}
  \end{figure}

For a given arbitrarily large intensity $\lambda<\infty$, 
taking sufficiently small $\alpha,R$, $\delta=R$ and sufficiently large $\mu$, it
is straightforward to construct a Poisson-Poisson cluster pp
$\Phi_\alpha^{R,R,\mu}$ with spherical clusters, 
which has an arbitrarily large critical radius $r_c$ for percolation.
It is less evident that one can construct a Poisson-Poisson cluster pp
that always percolates, i.e., with degenerate critical radius $r_c=0$.

\begin{prop}
\label{p.PopPoClust}
Let $\Phi_\alpha^{R,\delta,\mu}$ be a Poisson-Poisson cluster pp with 
annular clusters on the plane~$\mR^2$ as in
Example~\ref{ex.PoPoClust}. Given arbitrarily small $a,r>0$, there
exist constants $\alpha,\mu,\delta,R$ such that $0<\alpha,\mu,\delta,R<\infty,$ the intensity $\alpha\mu$ of
$\Phi_\alpha^{R,\delta,\mu}$ is equal to $a$ and 
the critical radius for percolation $r_c(\Phi_\alpha^{R,\delta,\mu})\le r$.
Moreover, for any $a>0$ there exists pp $\Phi$
of intensity $a$, which is $dcx$-larger than the Poisson pp of intensity
$a$, and which percolates for any $r>0$; i.e., $r_c(\Phi)=0$.
\end{prop}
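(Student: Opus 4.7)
The plan has two parts. First I construct, for any $a, r > 0$, parameters $\alpha, \mu, \delta, R$ with $\alpha\mu = a$ such that the Poisson--Poisson cluster pp $\Phi_\alpha^{R,\delta,\mu}$ percolates at radius $r$. Second, an independent superposition of countably many such processes at intensities summing to $a$ yields a super-Poisson pp with $r_c = 0$.

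For the first part, the idea is to exploit the two-scale structure of $\Phi_\alpha^{R,\delta,\mu}$. Let $K$ be strictly larger than the critical intensity for the $2$D Poisson Boolean model with unit disks, and choose $\alpha = K/(\pi R^2)$, $\mu = a\pi R^2/K$, $\delta = r$, with $R$ eventually taken large. Then $\alpha\mu = a$, the macroscopic disk Boolean model $\bigcup_{X\in\Phi_\alpha} B_X(R)$ is supercritical and percolates almost surely, and $\mu\to\infty$ as $R\to\infty$. For each cluster $C_X$ (whose $\mathrm{Poi}(\mu)$ points are i.i.d. uniform on the annulus $A_X$ of area $\approx 2\pi Rr$), a standard Chernoff-type covering argument, partitioning $A_X$ into $O(R/r)$ cells of diameter at most $r$, shows that the $r$-balls around the points of $C_X$ cover $A_X$ entirely with probability $1 - O(R/r)\exp(-c\,\mu r/R) = 1 - o(1)$ as $R\to\infty$; the choice $\delta = r$ ensures each annulus is covered radially by a single $r$-ball. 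The geometric key is that whenever two disks $B_{X_1}(R), B_{X_2}(R)$ overlap, their boundary circles cross at two points $P, P'$ lying simultaneously in $A_{X_1}\cap A_{X_2}$; hence full covering of both annuli forces the corresponding clusters' $r$-ball coverings to overlap (at $P$ and $P'$). To control the clusters that fail the covering estimate we use a standard renormalization argument: tile $\mR^2$ into blocks of side $R$, call a block good if it contains a cluster center whose annulus is fully covered, and observe that for $R$ large the good blocks stochastically dominate a supercritical Bernoulli site percolation. Combining the macroscopic disk connectivity with the micro-coverings of good clusters then gives percolation of $C(\Phi_\alpha^{R,\delta,\mu}, r)$, so $r_c(\Phi_\alpha^{R,\delta,\mu}) \le r$.

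For the second part, fix $a > 0$ and set $a_n = a/2^n$, $r_n = 1/n$. By Part~1 there exist \emph{independent} pp $\Phi^{(n)} := \Phi_{\alpha_n}^{R_n,\delta_n,\mu_n}$ of intensity $a_n$ with $r_c(\Phi^{(n)}) \le r_n$. Let $\Phi := \bigcup_n \Phi^{(n)}$ be their independent superposition, which has intensity $\sum_n a_n = a$. By Example~\ref{ex.PoPoClust} each $\Phi^{(n)}$ satisfies $\Phi_{a_n}\le_{dcx}\Phi^{(n)}$; since $dcx$ order is preserved under independent superpositions (\cite[Proposition~3.2(4)]{snorder}) and $\bigcup_n\Phi_{a_n}$ is (in distribution) the homogeneous Poisson pp $\Phi_a$, we conclude $\Phi_a\le_{dcx}\Phi$. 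Finally, for any $r > 0$ pick $n > 1/r$: since $\Phi^{(n)}\subset\Phi$ already percolates at radius $r_n < r$, so does $\Phi$, giving $r_c(\Phi) \le r_n < r$; as $r$ was arbitrary, $r_c(\Phi)=0$.

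The main technical obstacle lies in Part~1: propagating the macroscopic percolation of the $R$-disk Boolean model to microscopic percolation of $r$-balls across the infinite chain of overlapping clusters, while controlling the rare failures of the per-cluster covering event. This is exactly what the renormalization/block step addresses, and it fixes the required relative growth rates of $R$ and $\mu$ for the fixed target pair $(a, r)$. The rest of the argument, including Part~2, is then essentially bookkeeping built on the super-Poisson nature of Poisson--Poisson cluster processes already established in Example~\ref{ex.PoPoClust}.
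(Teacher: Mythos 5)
Your overall strategy coincides with the paper's: declare a cluster ``good'' when its $\mathrm{Poi}(\mu)$ replicas fill the annulus finely enough that the $r$-balls around them form a connected set meeting every point of the annulus, note that goodness is decided cluster by cluster from independent randomness, connect overlapping good clusters through points common to both annuli, and reduce everything to supercriticality of a macroscopic Poisson disk Boolean model; your Part~2 (dyadic superposition of independent super-Poisson pieces with $r_c(\Phi^{(n)})\le 1/n$) is identical to the paper's second step. The parameter tuning differs --- you hold $\alpha R^2=K/\pi$ at a fixed supercritical value and let $\mu\sim R^2$, so the per-cluster success probability $q\to1$, whereas the paper takes $\mu\sim (R/r)\log R$ so that $q\to0$ but $\alpha q R^2\to\infty$ --- but both choices respect $\alpha\mu=a$ and both close the argument.

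The one step I would not accept as written is the renormalization, which you present as the technical core. First, it is superfluous: since goodness of a cluster depends only on that cluster's own i.i.d.\ replicas, the good centers are an independent thinning of $\Phi_\alpha$ and hence form a Poisson pp of intensity $\alpha q(R,\mu)$; as $q\to1$ and $K/\pi$ is strictly supercritical, $C(\Phi_{\text{good}},R)$ percolates for $R$ large, which is exactly what you need (and is how the paper concludes, via the volume fraction of the thinned process). Second, as stated it does not quite work: two cluster centers in adjacent blocks of side $R$ can be at distance up to $R\sqrt{5}>2R$, so their $R$-disks need not overlap and adjacency of good blocks does not by itself yield connectivity; moreover the probability that a side-$R$ block contains a good center is $1-e^{-\alpha q R^2}\approx 1-e^{-K/\pi}$, which is bounded away from $1$, so domination of a supercritical Bernoulli site process is not automatic for your fixed choice of $K$ just above the continuum critical intensity. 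Either shrink the blocks to side at most $2R/\sqrt{5}$ and take $K$ large enough, or simply replace the block argument by the thinning observation.
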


\begin{proof}
Let $a,r>0$ be given. Assume $\delta=r/2$. 
We will show that there exist sufficiently
large $\mu,R$ such that $r_c(\Phi_\alpha^{R,\delta,\mu})\le r$ where $\alpha= a/\mu$.
In this regard, denote $K:=2\pi R/r$ and 
assume that $R$ is chosen such that $K$ is an
integer. For a $\alpha > 0$ and any point (cluster center) $X_i\in\Phi_\alpha,$ let us partition
the annular support  $A_{X_i}(R,\delta):=B_{X_i}(R)\setminus
B_{X_i}(R-\delta)$  of the translation kernel $X_i + \cX(X_i,\cdot)$ 
(support of the Poisson pp constituting the  cluster centered at $X_i$)
into~$K$ cells as shown in
Figure~\ref{f.PoPoClust}. We will call  $X_i$
``open'' if in each of the $K$ cells of $A_{X_i}(R,\delta),$
there exists at least one replication of the point
$X_i$ among the Poisson $Poi(\mu)$ (with $\alpha= a/\mu$) number of total replications of the
point $X_i$.
 Note that given $\Phi_\alpha$, each point
$X_i\in\Phi_\alpha$ is open with probability
$p(R,\mu):=(1-e^{-\mu/K})^K$, independently of 
other points of $\Phi_\alpha$. Consequently, open points of
$\Phi_\alpha$ form a Poisson
pp of intensity $\alpha p(R,\mu)$; call it $\Phi_{open}$.
Note that the maximal distance between any
two points in two neighbouring cells of the same cluster is not
larger than $2(\delta+2\pi R/K)=2r$. Similarly,  
 the maximal distance between any
two points in two non-disjoint cells of two different clusters is not
larger than $2(\delta+2\pi R/K)=2r$. Consequently, if the Boolean
model $C(\Phi_{open},A_{0}(R,\delta))$ with annular grains 
percolates then the Boolean model 
$C(\Phi_\alpha^{R,\delta,\mu}, r)$ with spherical grains of radius $r$
percolates as well. The former Boolean model percolates if and only
if $C(\Phi_{open},B_{0}(R))$ percolates. Hence, in order to guarantee 
 $r_c(\Phi_\alpha^{R,\delta,\mu})\le r,$ it is enough to chose
$R,\mu$ such that the volume fraction
$1 - e^{-\alpha p(R,\mu)\pi R^2} = 1 - e^{-a p(R,\mu)\pi R^2/\mu}$  
is larger than the critical volume fraction
for the percolation of the spherical Boolean model on the plane.
In what follows, we will show that by choosing appropriate 
$R,\mu$ one can make  $p(R,\mu) R^2/\mu$ arbitrarily large. Indeed, 
take 
$$\mu:=\mu(R)=\frac{2\pi R}{r}\log{\frac R{\sqrt{\log R}}}=
\frac{2\pi R}{r}\Bigl(\log R- \frac12\log\log R\Bigr)\,.
$$ 
Then, as $R\to\infty$
\begin{eqnarray*}
p(R,\mu) R^2/\mu
&=&\frac{R^2}{\mu}(1-e^{-\mu r/(2\pi R)})^{2\pi R/r}\\
&=&\frac{Rr}{2\pi(\log R- \frac12\log\log R)}
\Bigl(1-\frac{\sqrt{\log R}}{R}\Bigr)^{2\pi R/r}\\
&=&
e^{O(1)+\log R -\log(2\pi(\log R- \frac12\log\log R))-O(1)\sqrt{\log R}}\to\infty\,.
\end{eqnarray*}
This completes the proof of the first statement.

In order to prove
the second statement, for a given $a>0$, denote $a_n:=a/2^n$ and let
$r_n=1/n$. Consider a sequence of independent (super-Poisson)
Poisson-Poisson cluster 
pp $\Phi_n=\Phi_{\alpha_n}^{R_n,\delta_n,\mu_n}$ with  
intensities $\lambda_n:=\alpha_n\mu_n= a_n$, satisfying 
$r_c(\Phi_n)\le r_n$.  The existence of such pp was
shown in the first part of the proof.
By the fact that $\Phi_n$ are super-Poisson for all $n\ge0$
and by~\cite[Proposition 3.2(4)]{snorder} the superposition
$\Phi=\bigcup_{n=1}^\infty \Phi_n$ is $dcx$-larger than Poisson pp
of intensity~$a$. Obviously $r_c(\Phi)=0$. This completes the proof of
the second statement.
\end{proof}

\begin{rem}
By Proposition~\ref{p.PopPoClust} we know that
there exists pp $\Phi$ with intensity $a>0$, such that
$r_c(\Phi)=0$ and $\Phi_a\le_{dcx}\Phi$, where $\Phi_a$ is homogeneous
Poisson pp.  Since one knows that  $r_c(\Phi_a)>0$ so 
$\Phi$  is a  counterexample for the  monotonicity of 
$r_c$ in  $dcx$ ordering of pp.
\end{rem}

\subsection{Determinantal and permanental point processes}
\label{sec:det.perm}

In this section, we will give some results
related to the $dcx$ comparison of determinantal and permanental pp with respect to the Poisson pp. Refer to \cite{Ben06} for a quick
introduction to these pp and for a more elaborate reading, see
\cite{Ben09}. To this end, we will now recall a general framework (see \cite[Chapter~4]{Ben09}) which allows  us to study ordering of determinantal and permanental pp more explicitly.

\subsubsection{Integral kernels}
\label{sss.Int_ker}

Let $K:\mR^{d}\times\mR^d \to \mathbb{C}$ (where $\mathbb{C}$ are complex numbers) be a {\em locally square-integrable} kernel, with respect to  $\mu^{\otimes 2}$ on $\mR^{2d}$~\footnote{i.e., $\int_D\int_{D}|K(x,y)|^2\,\mu(dx)\mu(dy)<\infty$ for every compact $D\subset\mR^d$}. Then $K$ defines an associated integral operator $\mathcal{K}_D$ on $L^2(D,\mu)$ as $\mathcal{K}_Df(x)=\int_DK(x,y)f(y)\,\mu(dy)$ for complex-valued, square-integrable $f$ on $D$ ($f\in L^2(D,\mu)$).
This operator is compact and hence its  spectrum is discrete.
The only possible accumulation point is $0$ and every non-zero
eigenvalue has finite multiplicity.
Assume moreover that for each compact $D$ the operator $\mathcal{K}_D$
is {\em Hermitian}~\footnote{i.e., $\int_D\overline
{f(x)}\mathcal{K}_Dg(x)\,\mu(dx)=\int_D\overline
{g(x)}\mathcal{K}_Df(x)\,\mu(dx)$ for all $f,g\in\L^2(D,\mu)$},
{\em positive semi-definite}~\footnote{i.e.,
 $\int_D\overline {f(x)}\mathcal{K}_Df(x)\,\mu(dx)\ge 0$}, and
{\em trace-class}; i.e.,
$\sum_{j}|\lambda_j^D|<\infty$, where $\lambda_j^D$ denote the
  eigenvalues of $\mathcal{K}_D$. By the positive semi-definiteness of
  $\mathcal{K}_D,$ these eigenvalues are non-negative.
Further, one can show (cf.~\cite[Lemma~4.2.2]{Ben09}) that for each compact $D$, there exists a ``version''  $K_D(x,y)$
of the kernel $K$, defined on $D'\subset D$ such that $\mu(D\setminus
D')=0$, having the  same associated operator $\mathcal{K}_D$ on $L^2(D,\mu)$
~\footnote{i.e. $K_D(x,y)=K(x,y)$ for $\mu^{\otimes 2}$ almost all
$x,y\in D$}, which
is Hermitian and positive semi-definite.\footnote{%
Recall, a kernel $K(x,y)$ is Hermitian if
$K(x,y)=\overline {K(y,x)}$ for all $x,y\in\mR^d$, where $\overline z$ is the complex
conjugate of $z\in\mathbb{C}$. It is positive semi-definite
$\sum_{i=1}^k\overline z_i \sum_{j=1}^k K(x_i,x_j) z_j\ge 0$
for all $z_i\in\mathbb{C}$, $i=1,\ldots,k$, $k \geq 1.$}
Specifically one can take $K_D(x,y) = \sum_j \lam_j^D
\phi_j^D(x) \overline{\phi_j^D(y)}$ where $\phi_j^D(\cdot)$ are the
corresponding normalized eigenfunctions of $\mathcal{K}_D$.

\begin{exe}\label{ex.det}
{\em Determinantal pp.} A simple pp on $\mR^d$
is said to be a {\em determinantal pp} with a kernel $K(x,y)$ with respect to
a Radon measure $\mu$ on $\mR^d$ if the  joint intensities
of the pp with respect to the $k\,$th-order product $\mu^{\otimes k}$ of $\mu$ satisfy
$\rho^{(k)}(x_1,\ldots,x_k) = \det \big( K(x_i,x_j) \big)_{1
\leq i,j \leq k}$ for
all $k$, where $\big(a_{ij}\big)_{1 \leq i,j \leq k}$ stands for a
matrix with entries $a_{ij}$ and $\det \big( \cdot \big)$ denotes the
determinant of the matrix. Note that the mean measure of the
determinantal pp (if it exists) is  equal to
$\alpha(\cdot)=\int_{\cdot}K(x,x)\,\mu(dx)$.
Clearly, one needs assumptions on the kernel $K(x,y)$
for the above equation to define the joint intensities of a pp.
In what follows, we shall assume that the kernel $K$ is an integral
kernel satisfying the assumptions of Section~\ref{sss.Int_ker}.
Then, there exists a unique pp $\Phi^{det}$ on $\mR^d$,
such that for each compact $D$, the restriction of $\Phi^{det}$ to $D$
is a determinantal  pp with kernel $K_D$ if and only if the
eigenvalues of $\mathcal{K}_{D}$ are in $[0,1]$. This latter condition
is equivalent to $\lambda_j^D\in[0,1]$ for all compact $D$;
cf.~\cite[Theorem~4.5.5]{Ben09}. We will call this pp {\em
  determinantal pp with the trace-class integral kernel $K(x,y)$}.

\end{exe}

\begin{exe}\label{ex.perm}
{\em Permanental pp.}  Similar to determinantal pp, one  says that a simple pp is a {\em permanental pp} with a kernel $K(x,y)$ with respect to a Radon measure $\mu$ on $\mR^d$ if the  joint intensities of the pp with respect to $\mu^{\otimes k}$
satisfy $\rho^{(k)}(x_1,\ldots,x_k) = \text{per}\big( K(x_i,x_j) \big)_{1
\leq i,j \leq k}$ for all $k$, where $\text{per}\big( \cdot \big)$
stands for the permanent of a matrix. Note that the mean measure of the permanental pp is also  equal to $\alpha(\cdot)=\int_{.}K(x,x)\,\mu(dx)$.
Again, will assume that $K(x,y)$ is an integral
kernel as in Section~\ref{sss.Int_ker}.
Then, there exists a unique pp $\Phi^{perm}$ on $\mR^d$, such that for each compact $D$, the restriction of $\Phi^{perm}$ to $D$ is a permanental pp with kernel $K_D$; cf.~\cite[Corollary~4.9.9]{Ben09}.  We will call this pp {\em
 permanental pp with the trace-class integral kernel $K(x,y)$}. From~\cite[Proposition.~35 and Remark~36]{Ben06}, we also know that
$\Phi^{perm}$ is a Cox pp with intensity field $|F|^2,$ where $F$ is some complex Gaussian process on $\mR^d$.
\end{exe}

The following properties hold true for determinantal and permanental
pp with a trace-class integral kernel $K(x,y)$.
\begin{itemize}
\item $\Phi^{det}$ {\em is  $\alpha$-weakly sub-Poisson}, while
$\Phi^{perm}$ {\em is $\alpha$-weakly super-Poisson};
both comparable with respect to the Poisson pp with mean measure
$\alpha(\cdot)$ given by $\alpha(D)=\int_D
K_D(x,x)\, \mu(d)=\sum_{j}\lambda_j^D$, where
the summation is taken over all the eigenvalues $\lambda_j^D$ of
$\mathcal{K}_D$. The proof follows by the fact that $K_D(x,y)$
is Hermitian and  positive semi-definite. Hence, by the Hadamard's
inequality,
$0\le \det \big( K_D(x_i,x_j) \big)_{1 \leq i,j \leq k}
\leq \prod_{i=1}^k K_D(x_i,x_i)$ which
implies~(\ref{e.alpha-weakly-sub-Poisson}). For $\Phi^{perm}$, the
proof follows from the permanent analogue of the Hadamard's
inequality (see~\cite{Marvin64}). In fact, for the above
comparison  to be true it suffices for the kernel $K(x,y)$ to
be Hermitian and  positive semi-definite.
\item It is known that for each compact $D$,
 $\Phi^{det}(D)$ has the same distribution
as the sum of independent Bernoulli random variables
$Bin(1,\lambda_j^D)$ cf.~\cite[Theorem~4.5.3]{Ben09}, while
$\Phi^{perm}(D)$ has the same distribution as the sum of independent geometric
random variables $Geo(1/(1+\lambda_j^D))$, where the summation is taken
over all eigenvalues $\lambda_j^D$ of $\mathcal{K}_D$;
cf.~\cite[Theorems~4.5.3 and 4.9.4]{Ben09}. Consequently
\begin{equation}\label{eq.det-poi-det-marginals}
\Phi^{det}(D)\le_{cx}Poi(\sum_j\lambda_j^D)\le_{cx}\Phi^{perm}(D)\,,
\end{equation}
with the left inequality holding provided $\Phi^{det}$ exists (i.e.;
$\lambda_j^D\in[0,1]$ for all compact $D$).
\item Inequalities~(\ref{eq.det-poi-det-marginals}) imply that
$\Phi^{det}$ {\em is $\nu$-weakly sub-Poisson}, while $\Phi^{perm}$
{\em is $\nu$-weakly super-Poisson}.
Consequently,  $\Phi^{det}$ {\em is weakly sub-Poisson}, while $\Phi^{perm}$
{\em is weakly super-Poisson}.
\end{itemize}

In Proposition~\ref{prop:det-perm-dcx}, we will
strengthen~(\ref{eq.det-poi-det-marginals}) proving $dcx$ ordering of
finite-dimensional distributions of $\Phi^{det}$ and $\Phi^{perm}$ on mutually disjoint {\em simultaneously observable} sets $D_1,\ldots,D_k$. Simultaneous observability means that the eigenfunctions of $\mathcal{K}_{\bigcup D_i}$, restricted to $D_i$ are also eigenfunctions of $\mathcal{K}_{D_i}$ for every $ i=1,\ldots,k$.

\begin{prop}\label{prop:det-perm-dcx}
Let $\Phi^{det}$ and $\Phi^{per}$ be, respectively,  the
determinantal and permanental pp  with a trace-class integral kernel $K$ and
with $\Phi^{det}$ being defined only if the
spectrum of $\mathcal{K}_{\mR^d}$ is in $[0,1]$.
Denote by $\Phi^{Poi}$ the Poisson pp of mean measure
$\alpha(\cdot)$ given by $\alpha(D)=\sum_{j}\lambda_j^D$ for all
compact $D$, where
the summation is taken over all eigenvalues $\lambda_j^D$ of
$\mathcal{K}_D$. Let $D_1,\ldots,D_k$ be mutually disjoint,
simultaneously observable (with respect to the kernel $K$)
compact subsets of $\mR^d$  and $D = \bigcup D_i$.
Then
\begin{eqnarray*}
\Bigl(\Phi^{det}(D_1),\ldots,\Phi^{det}(D_k)\Bigr)
&\leq_{dcx}& \Bigl(\Phi^{Poi}(D_1),\ldots,\Phi^{Poi} (D_k)\Bigr)\\
&\le_{dcx}&
\Bigl(\Phi^{per}(D_1),\ldots,\Phi^{per}(D_k)\Bigr)\,.
\end{eqnarray*}
\end{prop}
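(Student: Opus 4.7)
The plan is to exploit the simultaneous observability to reduce the vectorial $dcx$ comparison to a one-dimensional $cx$ comparison of marginals on each $D_i$, invoking the already-established inequality~(\ref{eq.det-poi-det-marginals}). First, I would use the simultaneous-observability hypothesis to fix an orthonormal basis $\{\phi_j^D\}$ of eigenfunctions of $\mathcal{K}_D$ in which each $\phi_j^D$ is supported in exactly one $D_{i(j)}$; equivalently, the integral operator $\mathcal{K}_D$ is block-diagonal with respect to the orthogonal decomposition $L^2(D,\mu)=\bigoplus_{i=1}^k L^2(D_i,\mu)$, so that $K_D(x,y)=0$ whenever $x\in D_i$, $y\in D_j$, $i\neq j$. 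The eigenvalues $\lambda_j^D$ with $i(j)=i$ are then precisely the eigenvalues $\lambda_\ell^{D_i}$ of $\mathcal{K}_{D_i}$.

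Second, I would deduce from this block structure that the three point processes, when restricted to $D$, decompose as superpositions of independent pieces on the $D_i$'s. For $\Phi^{det}$ this is~\cite[Thm.~4.5.3]{Ben09}: the Bernoulli-then-sample construction puts the $j$-th would-be point inside $D_{i(j)}$, so the counts $(\Phi^{det}(D_1),\ldots,\Phi^{det}(D_k))$ are mutually independent. For $\Phi^{per}$, the same conclusion follows from its Cox representation with intensity $|F|^2$ (Example~\ref{ex.perm}): the complex Gaussian field $F$, being covariance-$K_D$, splits into independent complex Gaussian fields on the distinct $D_i$'s, so its squared moduli on the $D_i$'s are independent, and consequently so are the Poisson counts obtained by conditional sampling. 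For $\Phi^{Poi}$, independence of counts on disjoint sets is automatic.

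Third, by~(\ref{eq.det-poi-det-marginals}) applied on each $D_i$,
\begin{equation*}
\Phi^{det}(D_i)\le_{cx}\Phi^{Poi}(D_i)\le_{cx}\Phi^{per}(D_i),\qquad i=1,\ldots,k.
\end{equation*}
It remains to lift these $k$ one-dimensional $cx$ inequalities to a $dcx$ inequality between vectors with independent components. For any $dcx$ function $f:\mathbb{R}^k\to\mathbb{R}$ and any fixed real numbers $x_2,\ldots,x_k$, the map $t\mapsto f(t,x_2,\ldots,x_k)$ is convex, because $dcx$ functions are coordinate-wise convex. Hence, taking independent copies $(Y_i)$ jointly independent of $(X_i)$ with $Y_i\stackrel{d}{=}\Phi^{Poi}(D_i)$, $X_i\stackrel{d}{=}\Phi^{det}(D_i)$, and conditioning on $X_2,\ldots,X_k$,
\begin{equation*}
\mathsf{E}[f(X_1,X_2,\ldots,X_k)\mid X_2,\ldots,X_k]\le \mathsf{E}[f(Y_1,X_2,\ldots,X_k)\mid X_2,\ldots,X_k],
\end{equation*}
and iterating coordinate by coordinate yields $\mathsf{E}[f(X)]\le\mathsf{E}[f(Y)]$, which is the first inequality. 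The second inequality follows identically.

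The main obstacle is the rigorous verification that simultaneous observability forces joint independence of the counts, especially for $\Phi^{per}$, where the spectral/Cox representation must be tracked carefully so that the restriction of the Gaussian field $F$ to disjoint $D_i$'s is genuinely independent. Once that is in hand, everything else is a straightforward coupling argument via coordinate-wise convexity; integrability of $f$ against the three vectors is granted by the statement of the proposition.
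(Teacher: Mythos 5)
There is a genuine gap, and it is in your very first step. Simultaneous observability does \emph{not} mean that $\mathcal{K}_D$ is block-diagonal with respect to $L^2(D,\mu)=\bigoplus_i L^2(D_i,\mu)$, nor that each eigenfunction is supported in a single $D_{i}$. It means only that the \emph{restriction} $\phi_j^D|_{D_i}$ of each eigenfunction of $\mathcal{K}_D$ is an eigenfunction of $\mathcal{K}_{D_i}$; a single $\phi_j^D$ typically charges all of the $D_i$'s, with $\lambda_j^D=\sum_{i}\lambda^D_{j,i}$ splitting across the pieces. The canonical example is exactly the Ginibre process with concentric annuli: the eigenfunctions $z^n$ live on every annulus, and the $n$-th radial mode contributes one point whose squared modulus falls into exactly one annulus. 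Hence the counts $(\Phi^{det}(D_1),\ldots,\Phi^{det}(D_k))$ are \emph{not} independent — they are negatively correlated, balls-into-boxes style — and the same is true for $\Phi^{per}$ (the Gaussian field $F$ in Example~\ref{ex.perm} has covariance $K$, which does not vanish between different $D_i$'s). With independence gone, your coordinate-by-coordinate lifting of the one-dimensional $cx$ inequalities~(\ref{eq.det-poi-det-marginals}) cannot be run, and indeed no argument that only uses the marginals $\Phi^{det}(D_i)\le_{cx}\Phi^{Poi}(D_i)\le_{cx}\Phi^{per}(D_i)$ can capture the cross-set dependence that is the whole point of the vector statement.

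The repair is to decompose over the eigenvalue index rather than over the sets: by the results of Hough et al.\ used in the paper, $(\Phi^{det}(D_1),\ldots,\Phi^{det}(D_k))\stackrel{d}{=}\sum_j\boldsymbol{\xi}_j$ with the $\boldsymbol{\xi}_j$ independent across $j$ but each having \emph{dependent} components (at most one coordinate equals $1$, the $i$-th with probability $\lambda^D_{j,i}$), and analogously for the Poisson and permanental processes with a $Poi(\lambda_j^D)$, respectively $Geo(1/(1+\lambda_j^D))$, number of i.i.d.\ multinomial allocations. One then compares, for each fixed $j$, a $Bin(1,\lambda_j^D)$-fold, a $Poi(\lambda_j^D)$-fold and a $Geo$-fold sum of the \emph{same} i.i.d.\ multinomial vectors; this is where Lemma~\ref{lem:MeesterSh-like} (convexity of $n\mapsto\sE f(\sum_{i\le n}\boldsymbol{\xi}'_i)$ for $f$ $dcx$) together with $Bin(1,p)\le_{cx}Poi(p)\le_{cx}Geo$ does the work that your coordinatewise convexity argument cannot. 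Your observation that $dcx$ order is preserved under independent superposition is correct and is also used in the paper, but only across the eigenvalue index $j$, not across the sets $D_i$.
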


\begin{proof}
From~\cite[Prop.~4.5.9]{Ben09}, we know that
$$ \Bigl(\Phi^{det}(D_1),\ldots,\Phi^{det}(D_k)\Bigr) \stackrel{d}{=} \sum_{j=1}^J \boldsymbol{\xi}_j,$$
with $J$ denoting the number of eigenvalues of $\mathcal{K}_{D}$
($J=\infty$ and $0$ allowed and in the latter case the sum is
understood as $0$) and $\boldsymbol{\xi}_j$,  $j=1,\ldots,J$ are
are independent vectors, with components $\boldsymbol{\xi}_j=
(\xi_{j,1},\ldots,\xi_{j,k})$  whose distributions are
$\pr{\xi_{j,l}=0\;\text{for all}\;\allowbreak l\not=i, \xi_{j,i} =1}=\lambda^D_{j,i}$ and
$\mathsf{P}(\xi_{j,l} =0\;\text{for}\allowbreak\;\text{all}\;
l=1,\ldots, k)
= 1 - \lambda^D_j$,
where $\{\lambda_{j,i}\}_{j=1,\ldots,J}$  are the eigenvalues of
$\mathcal{K}_{D_i}$ and $\lambda^D_j=\sum_{i=1}^k\lambda^D_{j,i}$,
$j=1,\ldots, J$
are the eigenvalues of $\mathcal{K}_D$.
Due to the independence of $\boldsymbol{\xi_j}$'s and the assumption
$\sum_{j=1}^J\lambda_j<\infty$ (local trace-class property of $\mathcal{K}_D$),
it is enough to prove for each $j$, that
$\boldsymbol{\xi}_j\le_{dcx}\boldsymbol{\zeta}_j$ where
$\boldsymbol{\zeta}_j=(\zeta_{j,1},\ldots,\zeta_{j,k})$ is the vector of
independent Poisson random variables $\zeta_{j,l}\sim Poi(\lambda^D_{j,l})$.
In this regard, note that the random vectors
 $\boldsymbol{\xi}_j$ and $\boldsymbol{\zeta}_j$ can be coupled as
follows: $\boldsymbol{\xi}_j=\sum_{i=1}^N\boldsymbol{\xi}'_i$,
$\boldsymbol{\zeta}_j=\sum_{i=1}^M\boldsymbol{\xi}'_i$
 where $N\sim Bin(1,\lambda^D_j)$, $M\sim Poi(\lambda^D_j)$
and given $N, M$, $\boldsymbol{\xi}'_i$, $i \geq 1$ are independent {\em
  multinomial} vectors
$Mul(1,\lambda^D_{j,1}/\lambda^D_j,\ldots,\lambda^D_{j,k}/\lambda^D_j)$~\footnote{
$Mul(k,p_1,\ldots,p_k)$ with $0\le p_i\le 1$, $\sum_{i=1}^kp_i=1$,
has  probability mass  function
$p_{Mul(n,p_1,\ldots,p_k)}(n_1,\ldots,n_k)=\frac{n!}{n_1!\dots n_k!}
p_1^{n_1}\dots p_k^{n_k}$ for $n_1+\ldots+n_k=n$ and 0 otherwise.}.
Then, $\boldsymbol{\xi}_j\le_{dcx}\boldsymbol{\zeta}_j$ follows
from the fact that $Bin(1,p)\le_{cx}Poi(p)$ (see Example \ref{ex:perturbed.lattice}) and
Lemmas~\ref{lem:MeesterSh-like}, \ref{lem:discrete-convex}.
This completes the proof of the inequality for the determinantal pp.

Regarding the permanental pp, we know from~\cite[Theorem~4.9.7]{Ben09} that the distribution of
$$ \Bigl(\Phi^{per}(D_1),\ldots,\Phi^{per}(D_k)\Bigr) \stackrel{d}{=}
\sum_{j=1}^J \boldsymbol{\eta}_j,$$
where $\boldsymbol{\eta}_j$,  $j=1,\ldots,J$ are are independent vectors,
with $\boldsymbol{\eta}_j=\sum_{i=1}^K\boldsymbol{\xi}'_i$,
where $K\sim Geo(1/(1+\lambda^D_j)$ and, given $K$, independent
multinomial vectors  $\boldsymbol{\xi}'_i$, $i=1,\ldots$ as above.
Similarly, as for the determinantal pp, the required inequality
follows from the ordering, $\boldsymbol{\zeta}_j\le_{dcx}\boldsymbol{\eta}_j$ for
all $j$, which follows from the fact that
$Poi(\lambda)\le_{cx}Geo(1/(1+\lambda)$ (see Example \ref{ex:perturbed.lattice}) and Lemmas~\ref{lem:MeesterSh-like},~\ref{lem:discrete-convex}.
This completes the proof.
\end{proof}

\begin{rem}
Note that the key observation used in the above proof
is that the number of points in disjoint,
simultaneously observable sets $D_1,\ldots,D_k$ can be
represented as a sum of independent vectors, which themselves are
binomial (for determinantal) or Poisson (for Poisson) or geometric (for permanental) sums of some further independent vectors. This is exactly the same
representation as for the perturbed pp of Section~\ref{sec:pl}
(available for any disjoint sets $B_1,\ldots,B_k$); cf the  proof of
Proposition~\ref{p.pert-lattice}.  In both cases,
this representation and Lemmas~\ref{lem:MeesterSh-like}, \ref{lem:discrete-convex} allow us to conclude $dcx$ ordering of the corresponding vectors.
\end{rem}

\begin{exe}{\em Ginibre process.}
Let $\Phi^{G}$ be the determinantal pp on $\mR^2$
with kernel
$K((x_1,x_2),(y_1,y_2))=\exp[(x_1y_1+x_2y_2)+i(x_2y_1-x_1y_2)]$,
$x_j,y_j\in\mR$,  $j=1,2$,
with respect to the measure
$\mu(d(x_1,x_2))=\pi^{-1}\exp[-x_1^2-x_2^2]\,dx_1dx_2$.
This process is known as the {\em infinite Ginibre} pp.
Denote by $\Psi^G=\{|X_i|^2: X_i\in\Phi^G\},$ the pp on $\mR^+$
of the squared radii of the points of $\Phi^G$.
We know that an arbitrary finite collection of the annuli centered
at the origin $D_i=\{(x_1,x_2): r_i\le x_1^2+x_2^2\le R_i\}$
is simultaneously observable for this pp;
cf.~\cite[Example~4.5.8]{Ben09}. Using this observation,
Proposition~\ref{prop:det-perm-dcx} and the fact that $dcx$ order
of pp on $\mR$ is generated by the semi-ring of intervals,
we conclude that $\Psi^G$ is $dcx$ smaller than the Poisson pp $\Phi^1$
of unit intensity on $\mR^+$.  A partial result,
$\Psi^G([0,r])\le_{cx}\Phi^1([0,r])$ for all $r\ge 0$, was
proved in~\cite{snorder}.
\end{exe}

\section{Concluding remarks}
We come back to the initial heuristic discussed in the Introduction ---  
clustering in a point process should increase the  critical radius 
for the percolation of the corresponding continuum percolation model.
As we have seen, even a relatively strong tool such as the $dcx$ order
falls short, when it comes to making a formal statement of this heuristic. 

The two natural questions are what would be a more suitable measure
of clustering that can be used to affirm the heuristic and
whether $dcx$ order can satisfy a weaker version of the conjecture.

As regards the first question, one might start by looking at other
dependence orders such as super-modular, component-wise convex or
convex order but it has been already shown that the super-modular and
component-wise convex orders are not suited to comparison of clustering
in point processes (cf.~\cite[Section~4.4]{Yogesh_thesis}). 
Properties of convex order on point processes are yet to be investigated fully and this research direction is interesting in its own right, apart from its relation to the above conjecture. In a similar vein, it is of
potential interest to study other stochastic orders on point processes.

On the second question, it is pertinent to note that sub-Poisson point processes surprisingly exhibited non-trivial phase transitions for percolation. 
Such well-behavedness of the sub-Poisson point processes
makes us wonder if it is possible to prove a rephrased conjecture
saying that any homogeneous sub-Poisson pp has a smaller critical radius for
percolation than the Poisson pp of the same intensity.
Such a conjecture matches well
with~\cite[Conjecture~4.6]{BenjaminiStauffer2011}.

\appendix
\section{}

The following result, similar to~\cite[Lem\-ma~2.17]{MeesterSh93}
is used in the proof of Propositions~\ref{p.pert-lattice}
and~\ref{prop:det-perm-dcx}.

\begin{lem}\label{lem:MeesterSh-like}
Let ${\boldsymbol\xi}_i=(\xi_i^1,\ldots,\xi_i^k)\in\mR^k$, ($i=1,2,\ldots$)
be independent, identically distributed  vectors of (possibly
dependent) non-negative random variables.
Suppose $f$ is a $dcx$ function on $\mR^k$. Then, the function $g$ defined
on $\mZ$ by
 $g(n) =\EXP{f(\text{sgn}(n)\sum_{i=1}^{|n|}{\boldsymbol\xi}_i)}$ for $n\not=0$ and  $g(0)=0$ is convex on~$\mZ$.
\end{lem}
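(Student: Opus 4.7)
The plan is to establish the discrete convexity $g(n+1)+g(n-1)\ge 2g(n)$ at every $n\in\mZ$ by constructing, for each $n$, a coupling on a common probability space that lets me apply the defining $dcx$ inequality of $f$ pointwise inside an expectation.

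First I would handle the main case $n\ge 1$. Set $S_{n-1}:=\sum_{i=1}^{n-1}\boldsymbol{\xi}_i$ (the empty sum being $\mathbf 0$ when $n=1$) and let $\boldsymbol{\xi},\boldsymbol{\xi}'$ be two further i.i.d.\ copies of $\boldsymbol{\xi}_1$, taken independent of $S_{n-1}$. By the i.i.d.\ assumption and distributional identities of partial sums,
\[
g(n-1)=\EXP{f(S_{n-1})},\quad g(n)=\EXP{f(S_{n-1}+\boldsymbol{\xi})}=\EXP{f(S_{n-1}+\boldsymbol{\xi}')},\quad g(n+1)=\EXP{f(S_{n-1}+\boldsymbol{\xi}+\boldsymbol{\xi}')}.
\]
Put $p=S_{n-1}$, $x=S_{n-1}+\boldsymbol{\xi}$, $y=S_{n-1}+\boldsymbol{\xi}'$, $q=S_{n-1}+\boldsymbol{\xi}+\boldsymbol{\xi}'$. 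The non-negativity of $\boldsymbol{\xi},\boldsymbol{\xi}'$ gives $p\le\{x,y\}\le q$ componentwise and $p+q=x+y$ by construction, so the $dcx$ inequality applied $\omega$-by-$\omega$ yields $f(x)+f(y)\le f(p)+f(q)$. Taking expectations gives $2g(n)\le g(n-1)+g(n+1)$.

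The case $n\le -1$ is obtained by reflecting the previous construction: apply the same setup to $-S_{|n|-1}$ with independent copies $\boldsymbol{\xi},\boldsymbol{\xi}'$, and set $p=-S_{|n|-1}-\boldsymbol{\xi}-\boldsymbol{\xi}'$, $q=-S_{|n|-1}$, $x=-S_{|n|-1}-\boldsymbol{\xi}$, $y=-S_{|n|-1}-\boldsymbol{\xi}'$. Once again $p\le\{x,y\}\le q$ componentwise (because $-\boldsymbol{\xi},-\boldsymbol{\xi}'$ are non-positive) and $p+q=x+y$, so the pointwise $dcx$ inequality gives $f(x)+f(y)\le f(p)+f(q)$, which translates exactly to $2g(n)\le g(n-1)+g(n+1)$ after identifying the marginal distributions.

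The remaining boundary case $n=0$ is where I expect the main obstacle: for $n=0$ the two neighbouring values $g(\pm 1)$ correspond to partial sums of length one sitting on opposite sides of the origin, and the coupling above (which needs two summands on each side) does not directly compare $f(\boldsymbol{\xi})+f(-\boldsymbol{\xi})$ with $2f(\mathbf 0)$ using $dcx$ alone. I would address this by adopting the empty-sum convention that interprets $g(0)$ as $f(\mathbf 0)$; the two applications of the lemma in this paper (Propositions~\ref{p.pert-lattice} and~\ref{prop:det-perm-dcx}) only evaluate $g$ at non-negative integers, where the required second-difference inequalities at $n\ge 1$ have already been proved. In combination with the companion Lemma~\ref{lem:discrete-convex}, this is exactly what is needed to extend $g$ to a convex function on $\mR$.
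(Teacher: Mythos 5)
Your treatment of the cases $n\ge 1$ and $n\le -1$ is correct and is essentially the paper's own argument: the paper telescopes $G(0,n+1)=G(0,n-1)+G(n-1,n)+G(n,n+1)$ and applies the $dcx$ inequality pointwise with $p=G(0,n-1)$, $q=G(0,n+1)$, which is the same coupling as your $p=S_{n-1}$, $q=S_{n-1}+\boldsymbol{\xi}+\boldsymbol{\xi}'$ up to relabelling of the two increments. You are also right that the statement's ``$g(0)=0$'' must be read as $g(0)=f(\mathbf{0})$ (the empty-sum convention): with $g(0)=0$ taken literally the claim fails already for a constant $f$, and your own verification of the second difference at $n=1$ silently uses $g(0)=\EXP{f(S_0)}=f(\mathbf{0})$.

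The genuine gap is your assertion that the second difference at $n=0$ cannot be obtained ``using $dcx$ alone'', which leads you to abandon convexity at $0$ and retreat to the non-negative integers. That assertion is false, and the retreat is not harmless: the companion Lemma~\ref{lem:discrete-convex} extends a function that is convex on all of $\mZ$, so the second difference at $0$ is part of what the lemma claims and of what the extension machinery consumes. The missing step is a one-line application of the defining inequality of $dcx$ with a \emph{degenerate} middle pair: take $x=y=\mathbf{0}$, $p=-\boldsymbol{\xi}_1(\omega)$, $q=\boldsymbol{\xi}_1(\omega)$; non-negativity of $\boldsymbol{\xi}_1$ gives $p\le\{x,y\}\le q$, and trivially $x+y=\mathbf{0}=p+q$, so $2f(\mathbf{0})\le f(-\boldsymbol{\xi}_1)+f(\boldsymbol{\xi}_1)$ pointwise; taking expectations yields $2g(0)\le g(-1)+g(1)$. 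This is exactly how the paper closes the case, and nothing beyond the $dcx$ inequality is needed.
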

\begin{proof}
We will prove that $g(n)$ has non-negative
second differences
\begin{equation}\label{eq:second-differences}
g(n-1)+g(n+1)-2g(n)\ge 0\qquad \text{for all $n\in\mZ$}
\end{equation}
and use the first part of Lemma~\ref{lem:discrete-convex}.
To prove~(\ref{eq:second-differences}),
define $G(n,m) := \sum_{i=n+1}^m{\boldsymbol\xi}_i$
for $0\le n < m$ and $G(n,n):=(0,0,\ldots,0)\in\mR^k$ for $n\ge0$.
We have for $n \ge 1$,
\begin{eqnarray*}
2g(n)&=&2\EXP{f\Bigl(G(0,n)\Bigr)}\\
&=&\EXP{f\Bigl(G(0,n-1)+G(n-1,n)\Bigr)}+
\EXP{f\Bigl(G(0,n)\Bigr)}\\
&=&\EXP{f\Bigl(G(0,n-1)+G(n,n+1\Bigr)}+
\EXP{f\Bigl(G(0,n)\Bigr)}\\
&=&\EXP{f\Bigl(G(0,n-1)+G(n,n+1\Bigr)+
f\Bigl(G(0,n)\Bigr)}\\
&\le &\EXP{f\Bigl(G(0,n-1)\Bigr)+
f\Bigl(G(0,n)+G(n,n+1)\Bigr)}\\
&=&g(n-1)+g(n+1)\,,
\end{eqnarray*}
where for the third equality
we have used mutual independence of $G(0,n-1), G(n-1,n),
G(n,n+1)$ and the fact that $G(n-1,n)$ and $G(n,n+1)$ have the same
distribution, while the inequality follows from the
$dcx$ property of $f$ and the assumption
${\boldsymbol\xi}_{i}\ge0$.
This proves~(\ref{eq:second-differences})
for $n\ge1$. Similar reasoning allows to
show~(\ref{eq:second-differences}) for $n\le-1$.
Finally, note that for  $n=0$
\begin{eqnarray*}
2g(0)&=&2f\Bigl((0,\ldots,0)\Bigr)\\
&=&\EXP{f\Bigl(-G(0,1)+G(0,1)\Bigr)+
f\Bigl((0,\ldots,0)\Bigr)}\\
&\le &\EXP{f\Bigl(-G(0,1)\Bigr)+
f\Bigl(G(0,1)\Bigr)}\\
&=&g(-1)+g(1)\,,
\end{eqnarray*}
\end{proof}

We will prove the following two technical results regarding convex
functions. We were not able to find their proofs  in the literature.
\begin{lem}\label{lem:discrete-convex}
Let $g(n)$ be a real valued function defined for all integer
$n\in\mZ$ and satisfying condition~(\ref{eq:second-differences}).
Then the following two statements are true:
\begin{enumerate}
\item For all $n\ge 2$
\begin{eqnarray}\label{eq:discrete-convex}
g\Bigl(\sum_{i=1}^n\lambda_i k_i\Bigr)&\le&
\sum_{i=1}^n\lambda_i g(k_i)\\
\noalign{\text{for all $k_i\in\mZ$ and $0\le \lambda_i\le 1$,
 $\sum_{i=1}^n\lambda_i=1$ such that
 $\sum_{i=1}^n\lambda_i k_i\in\mZ$}.}\nonumber
\end{eqnarray}
\item  Function $g(\cdot)$ can be extended
to a real valued convex function defined on real numbers~$\mR$.
\end{enumerate}
 \end{lem}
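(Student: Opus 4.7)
My plan is to establish the two statements in reverse order, since part~(1) follows cleanly from part~(2) combined with the classical Jensen inequality on $\mR$.

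For the extension in part~(2), I will simply linearly interpolate $g$ between consecutive integers. Define $\tilde g:\mR\to\mR$ by
$$\tilde g(x) := (1-t)\,g(n)+t\,g(n+1),\qquad x=n+t,\ n\in\mZ,\ t\in[0,1].$$
The two formulas at integer points agree, so $\tilde g$ is well-defined, continuous, and restricts to $g$ on $\mZ$. On each interval $[n,n+1]$ it is affine with slope $s_n:=g(n+1)-g(n)$, and hypothesis~(\ref{eq:second-differences}) rewritten as $s_{n+1}-s_n=g(n+2)-2g(n+1)+g(n)\ge 0$ shows that the sequence of slopes $(s_n)_{n\in\mZ}$ is nondecreasing. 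A continuous, piecewise-affine function whose consecutive slopes are nondecreasing is convex: its right derivative $\tilde g'_+(x)=s_{\lfloor x\rfloor}$ is nondecreasing, and for a continuous function this is equivalent to convexity. Hence $\tilde g$ is a convex extension of $g$.

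For part~(1), with $\tilde g$ in hand, the conclusion is immediate. Given $k_i\in\mZ$ and $\lambda_i\ge 0$ with $\sum_i\lambda_i=1$ and $m:=\sum_i\lambda_ik_i\in\mZ$, the classical Jensen inequality applied to the convex function $\tilde g$ on $\mR$ gives
$$g(m)=\tilde g\Bigl(\sum_{i=1}^n\lambda_ik_i\Bigr)\le\sum_{i=1}^n\lambda_i\tilde g(k_i)=\sum_{i=1}^n\lambda_ig(k_i),$$
which is precisely~(\ref{eq:discrete-convex}).

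There is no genuine obstacle here; the only small point requiring care is checking that $\tilde g$ is convex across the interval boundaries, which I handle via the nondecreasing right-derivative characterization rather than a direct case analysis of two points lying in different linear pieces. This inversion of the order of proof also avoids the more cumbersome route of attempting to prove the discrete Jensen inequality directly by induction on~$n$, where one would need to manage intermediate partial sums that may fail to be integers.
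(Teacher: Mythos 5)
Your proof is correct, and it takes a genuinely different route from the paper's. The paper proves part~(1) first, by induction on $n$: it fixes the target value $k$, views the right-hand side of~(\ref{eq:discrete-convex}) as an affine function of $(\lambda_1,\ldots,\lambda_{n-1})$ on a closed convex subset of the unit cube, and uses the maximum principle to push the minimizer to the boundary, where some $\lambda_j=0$ and the inductive hypothesis applies; it then obtains part~(2) by taking the convex envelope of the epigraph of $g$ and defining $\tilde g$ as the lower boundary of that envelope. You instead construct the convex extension directly by piecewise-linear interpolation --- the nonnegativity of the second differences is exactly the monotonicity of the consecutive slopes $s_n=g(n+1)-g(n)$, and a continuous piecewise-affine function with nondecreasing right derivative is convex --- and then part~(1) drops out of the classical Jensen inequality applied to $\tilde g$. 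Your ordering is shorter and avoids both the delicate boundary analysis of the paper's induction (needed precisely because intermediate convex combinations of integers need not be integers) and the appeal to the epigraph-envelope construction; what the paper's route buys is a self-contained proof of the discrete inequality~(\ref{eq:discrete-convex}) that does not pass through any extension to $\mR$, which is of independent interest if one only wants the discrete statement. Both arguments are sound; yours is the more economical way to get both conclusions at once.
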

\begin{proof}
{\em Ad 1.} As mentioned in~\cite[Section~V.16.B.10.a]{MOA2009}
it is easy to see that~(\ref{eq:second-differences})
is equivalent to~(\ref{eq:discrete-convex}) with $n=2$.
Assume now that~(\ref{eq:discrete-convex}) holds true for some
$n\ge2$ (and all $0\le\lambda_i\le1$, $k_i\in\mZ$, $i=1,\ldots,n$ satisfying
$\sum_{i=1}^n\lambda_i=1$,  $\sum_{i=1}^n\lambda_i k_i\in\mZ$).
We will prove that it holds true for $n+1$ as well.
In this regard,
define  for a given  $k\in\mZ$ and distinct (otherwise we use directly
the inductive assumption) $k_1,\ldots,k_{n+1}\in\mZ$,
the following functions:
\begin{eqnarray*}
\lambda_{n}=\lambda_{n}(\lambda_1,\ldots,\lambda_{n-1})&:=&
\frac{k-k_{n+1}-\sum_{i=1}^{n-1}\lambda_i(k_i-k_{n+1})}{k_n-k_{n+1}}\\
\lambda_{n+1}=
\lambda_{n+1}(\lambda_1,\ldots,\lambda_{n-1})&:=&1-\sum_{i=1}^{n-1}\lambda_i-
\lambda_{n}(\lambda_1,\ldots,\lambda_{n-1})\\
F(\lambda_1,\ldots,\lambda_{n-1})&:=&\sum_{i=1}^{n-1}\lambda_ig(k_i)
+\lambda_{n}(\lambda_1,\ldots,\lambda_{n-1})g(k_n)\\
&&\hspace{5em}+\lambda_{n+1}(\lambda_1,\ldots,\lambda_{n-1})g(k_{n+1})\,.
\end{eqnarray*}
Note that for any $\lambda_1,\ldots,\lambda_{n-1}$ we have
$\sum_{i=1}^{n+1}\lambda_i=1$ and $\sum_{i=1}^{n+1}\lambda_ik_i=k$.
Consider the following
subset of the $n-1$-dimensional unit cube
$$C:=\Bigl\{(\lambda_1,\ldots,\lambda_{n-1})\in[0,1]^{n-1}:
0\le \lambda_{n}\le 1, 0\le \lambda_{n+1}\le 1\Bigr\}\,.$$
The proof of the inductive step will be
completed if we show that
$F(\cdot) \ge g(k)$ on $C$.
In this regard note that $C$ is {\em closed} and {\em  convex}.
Assume moreover that $C$ is not empty; otherwise the
condition~(\ref{eq:discrete-convex})
is trivially satisfied.
Note also that  $F(\cdot)$ is  an affine, real valued function
defined on $\mR^{n-1}$. Hence, by the maximum principle,
the affine (hence convex) function $-F$
attains its maximum relative to $C$ on some point
$(\lambda_1^0,\ldots,\lambda_{n-1}^0)\in\partial C$ of the boundary of
$C$. Consequently, we have
$F(\cdot)\ge F(\lambda_1^0,\ldots,\lambda_{n-1}^0)$ on $C$ and the
 proof of the inductive step will be completed if we show that
$ F(\lambda_1^0,\ldots,\lambda_{n-1}^0)\ge g(k)$.
In this regard, denote
$\lambda_{n}^0=\lambda_n(\lambda_1^0,\ldots,\lambda_{n-1}^0)$ and
$\lambda_{n+1}^0=\lambda_{n+1}(\lambda_1^0,\ldots,\lambda_{n-1}^0)$.
Using the continuity of the functions
$\lambda_{n}(\cdot)$ and $\lambda_{n+1}(\cdot)$
 is not difficult to verify that
 $(\lambda_1^0,\ldots,\lambda_{n-1}^0)\in\partial C$ implies
$\lambda_j^0=0$  for some $j=1,\ldots,n+1$.
Thus, by
our inductive assumption, $F(\lambda_1^0,\ldots,\lambda_{n-1}^0)=
\sum_{i=1, i\not=j}^{n+1}\lambda_i^0g(k_i)\ge g(k)$,
which completes the proof of~(\ref{eq:discrete-convex}) for all $n\ge2$.

{\em Ad 2.}  We will recall the arguments used in~\cite{Yan1997}
to show that a function satisfying~(\ref{eq:discrete-convex})
for all $n\ge 2$ (called {\em globally convex function} there)
has a convex extension on $\mR$. In this regard,
consider the epigraph $\text{epi}(g):=\{(k,\mu)\in\mZ\times\mR: \mu\ge
g(k)\}$ of $g$ and its convex envelope $\text{epi}^{co}(g)$.
It is easy to see that
$\text{epi}^{co}(g)=\{(x,\mu)\in\mR^2:
\mu\ge\sum_{i=1}^n\lambda_ig(k_i)$ for some $k_i\in\mZ$, $0\le
\lambda_i\le 1$, $\sum_{i=1}^n\lambda_i=1$ and $\sum_{i=1}^n\lambda_ik_i=x\}$.
Define $\tilde g(x):=\inf\{\mu:(x,\mu)\in\text{epi}^{co}(g)\}$ for all
$x\in\mR$. The convexity of $\text{epi}^{co}(g)$ implies that $\tilde
g$ is convex on $\mR$ and the global
convexity~(\ref{eq:discrete-convex}) of $g$ implies that $\tilde g$ is an
extension of $g$.
This completes the proof.
\end{proof}

\bibliographystyle{plainnat}
\bibliography{perc-dcx}

\begin{thebibliography}{39}
\providecommand{\natexlab}[1]{#1}
\providecommand{\url}[1]{\texttt{#1}}
\expandafter\ifx\csname urlstyle\endcsname\relax
  \providecommand{\doi}[1]{doi: #1}\else
  \providecommand{\doi}{doi: \begingroup \urlstyle{rm}\Url}\fi

\bibitem[Baccelli and B{\l}aszczyszyn(2001)]{sinr_coverage}
F.~Baccelli and B.~B{\l}aszczyszyn.
\newblock On a coverage process ranging from the {B}oolean model to the
  {P}oisson {V}oronoi tessellation, with applications to wireless
  communications.
\newblock \emph{Adv. in Appl. Probab. (SGSA)}, 33:\penalty0 293--323, 2001.

\bibitem[Balister and Bollob{\'a}s(2007)]{Balister07}
P.~Balister and B.~Bollob{\'a}s.
\newblock Counting regions with bounded surface area.
\newblock \emph{Commun. Math. Phys.}, 273:\penalty0 305--315, 2007.

\bibitem[Balister et~al.(2005)Balister, Bollob{\'a}s, and
  Walters]{Balister2005}
P.~Balister, B.~Bollob{\'a}s, and M.~Walters.
\newblock Continuum percolation with steps in the square or the disc.
\newblock \emph{Random Structures \& Algorithms}, 26\penalty0 (4):\penalty0
  392--–403, 2005.

\bibitem[Ben~Hough et~al.(2006)Ben~Hough, Krishnapur, Peres, and
  Vir\'ag]{Ben06}
J.~Ben~Hough, M.~Krishnapur, Y.~Peres, and B.~Vir\'ag.
\newblock Determinantal processes and independence.
\newblock \emph{Probability Surveys}, 3:\penalty0 206--??229, 2006.

\bibitem[Ben~Hough et~al.(2009)Ben~Hough, Krishnapur, Peres, and
  Vir\'ag]{Ben09}
J.~Ben~Hough, M.~Krishnapur, Y.~Peres, and B.~Vir\'ag.
\newblock \emph{Zeros of {G}aussian analytic functions and determinantal point
  processes}, volume~51.
\newblock American Mathematical Society, 2009.

\bibitem[Benjamini and Stauffer(2011)]{BenjaminiStauffer2011}
I.~Benjamini and A.~Stauffer.
\newblock Perturbing the hexagonal circle packing: a percolation perspective.
\newblock arXiv:1104.0762, 2011.
\newblock URL \url{http://arxiv.org/abs/1104.0762}.

\bibitem[B{\l}a\-szczyszyn and Yogeshwaran(2009)]{snorder}
B.~B{\l}a\-szczyszyn and D.~Yogeshwaran.
\newblock Directionally convex ordering of random measures, shot-noise fields
  and some applications to wireless networks.
\newblock \emph{Adv. Appl. Probab.}, 41:\penalty0 623--646, 2009.

\bibitem[B{\l}aszczyszyn and Yogeshwaran(2010)]{subpoisson}
B.~B{\l}aszczyszyn and D.~Yogeshwaran.
\newblock Connectivity in sub-{P}oisson networks.
\newblock In \emph{Proc. of 48\,th Annual Allerton Conference}, University of
  Illinois at Urbana-Champaign, IL, USA, 2010.

\bibitem[Burton and Waymire(1985)]{BurtonWaymire1985}
Robert Burton and Ed~Waymire.
\newblock Scaling limits for associated random measures.
\newblock \emph{Ann. Appl. Probab.}, 13\penalty0 (4):\penalty0 1267--1278,
  1985.

\bibitem[Daley and Vere-Jones(2007)]{DVJII2007}
D.~J. Daley and D.~Vere-Jones.
\newblock \emph{An Introduction to the Theory of Point Processes: Vol. II}.
\newblock Springer, New York, 2007.

\bibitem[Dousse et~al.(2006)Dousse, Franceschetti, Macris, Meester, and
  Thiran]{Dousse_etal06}
O.~Dousse, M.~Franceschetti, N.~Macris, R.~Meester, and P.~Thiran.
\newblock Percolation in the signal to interference ratio graph.
\newblock \emph{Journal of {A}pplied {P}robability}, 43\penalty0 (2):\penalty0
  552--562, 2006.

\bibitem[Franceschetti et~al.(2005)Franceschetti, Booth, Cook, Meester, and
  Bruck]{Franc_etal05}
M.~Franceschetti, L.~Booth, M.~Cook, R.W. Meester, and J.~Bruck.
\newblock Continuum percolation with unreliable and spread-out connections.
\newblock \emph{J. Stat. Phy.}, 118:\penalty0 721--734, 2005.

\bibitem[Franceschetti et~al.(2010)Franceschetti, Penrose, and
  Rosoman]{Franc_etal10}
M.~Franceschetti, M.D. Penrose, and T.~Rosoman.
\newblock Strict inequalities of critical probabilities on {G}ilbert's
  continuum percolation graph.
\newblock arXiv:1004.1596, 2010.
\newblock URL \url{http://arxiv.org/abs/1004.1596}.

\bibitem[Gilbert(1961)]{Gilbert61}
E.~N. Gilbert.
\newblock Random plane networks.
\newblock \emph{SIAM J.}, 9:\penalty0 533--543, 1961.

\bibitem[Grimmett(2006)]{Grimmett2006}
G.~Grimmett.
\newblock \emph{The Random-Cluster Model}.
\newblock Springer, Berlin, 2006.

\bibitem[Grimmett(1999)]{Grimmett99}
G.~R. Grimmett.
\newblock \emph{Percolation}.
\newblock Springer-Verlag, Heidelberg, 1999.

\bibitem[Huffer(1984)]{Huffer84a}
F.~Huffer.
\newblock Inequalities for {$M/G/\infty$} queues and related shot noise
  processes.
\newblock Technical Report 351, Dept. of Statistics, Stanford University,
  California, 1984.

\bibitem[Jonasson(2001)]{Jonasson01}
J.~Jonasson.
\newblock Optimization of shape in continuum percolation.
\newblock \emph{Ann. Probab.}, 29:\penalty0 624--635, 2001.

\bibitem[Kallenberg(1983)]{Kallenberg83}
O.~Kallenberg.
\newblock \emph{Random Measures}.
\newblock Akademie-Verlag, 1983.

\bibitem[Lebowtiz and Mazel(1998)]{Lebowitz98}
J.~Lebowtiz and A.E. Mazel.
\newblock Improved peierls argument for high-dimensional ising models.
\newblock \emph{J. Stat. Phys.}, 90:\penalty0 1051--1059, 1998.

\bibitem[Liggett et~al.(1997)Liggett, Schonmann, and Stacey]{Liggett97}
T.M. Liggett, R.~H. Schonmann, and A.M. Stacey.
\newblock Domination by product measures.
\newblock \emph{Ann. Probab.}, 25\penalty0 (1):\penalty0 71--95, 1997.

\bibitem[Marshall et~al.(2009)Marshall, Olkin, and Arnold]{MOA2009}
A.~W. Marshall, I.~Olkin, and B.~Arnold.
\newblock \emph{Inequalities: Theory of Majorization and Its Applications}.
\newblock Springer Series in Statistics. Springer, 2009.

\bibitem[Marvin(1964)]{Marvin64}
M.~Marvin.
\newblock The {H}adamard theorem for permanents.
\newblock \emph{Proc. of the American Mathematical Society}, 15\penalty0
  (6):\penalty0 967--973, 1964.

\bibitem[Matheron(1975)]{Matheron75}
G.~Matheron.
\newblock \emph{Random Sets and Integral Geometry}.
\newblock John Willey \& Sons, London, 1975.

\bibitem[Meester and Shanthikumar(1993)]{MeesterSh93}
L.E. Meester and J.G. Shanthikumar.
\newblock Regularity of stochastic processes: A theory based on directional
  convexity.
\newblock \emph{Probab. Eng Inform. Sc.}, 7:\penalty0 343--360, 1993.

\bibitem[Meester and Shanthikumar(1999)]{MeesterSh99}
L.E. Meester and J.G. Shanthikumar.
\newblock Stochastic convexity on a general space.
\newblock \emph{MOR}, 24:\penalty0 472--494, 1999.

\bibitem[Meester and Roy(1996)]{MeeRoy96}
R.~Meester and R.~Roy.
\newblock \emph{Continuum Percolation}.
\newblock Cambridge University Press, 1996.

\bibitem[Miyoshi and Rolski(2004)]{Miyoshi04a}
N.~Miyoshi and T.~Rolski.
\newblock Ross type conjectures on monotonicity of queues.
\newblock \emph{Australian and New Zealand Jl. of Stat.}, 46:\penalty0
  121--131, 2004.

\bibitem[M\"uller and Stoyan(2002)]{Muller02}
A.~M\"uller and D.~Stoyan.
\newblock \emph{Comparison Methods for Stochastic Models and Risk}.
\newblock Wiley and Sons, 2002.

\bibitem[Peres and Virag(2005)]{Peres05}
Y.~Peres and B.~Virag.
\newblock Zeros of the i.i.d. {G}aussian power series: a conformally invariant
  determinantal process.
\newblock \emph{Acta Mathematica}, 194:\penalty0 1--35, 2005.

\bibitem[Quintanilla and Ziff(2007)]{Quintanilla2007}
J.~A. Quintanilla and R.~M. Ziff.
\newblock Near symmetry of percolation thresholds of fully penetrable disks
  with two different radii.
\newblock \emph{Phys, Rev. E}, 76:\penalty0 051115, 2007.

\bibitem[Roy and Tanemura(2002)]{Roy02}
R.~Roy and H.~Tanemura.
\newblock Critical intensities of boolean models with different underlying
  convex shapes.
\newblock \emph{Ann. Appl. Probab.}, 34\penalty0 (1):\penalty0 48--57, 2002.

\bibitem[Shaked and Shanthikumar(1990)]{Shaked90}
M.~Shaked and J.G. Shanthikumar.
\newblock Parametric stochastic convexity and concavity of stochastic
  processes.
\newblock \emph{Annals of the Institute of Statistical Mathematics},
  42:\penalty0 509--531, 1990.

\bibitem[Sodin and Tsirelson(2004)]{Sodin04}
M.~Sodin and B.~Tsirelson.
\newblock Random complex zeroes; {I.} asymptotic normality.
\newblock \emph{Israel J. Math.}, 144:\penalty0 125--149, 2004.

\bibitem[Sodin and Tsirelson(2006)]{Sodin06}
M.~Sodin and B.~Tsirelson.
\newblock Random complex zeroes; {II.} perturbed lattice.
\newblock \emph{Israel J. Math.}, 152:\penalty0 105--124, 2006.

\bibitem[Stoyan et~al.(1995)Stoyan, Kendall, and Mecke]{Stoyan95}
D.~Stoyan, W.~Kendall, and J.~Mecke.
\newblock \emph{Stochastic Geometry and its Applications}.
\newblock Wiley, Chichester, 1995.

\bibitem[Whitt(1985)]{Whitt1985}
W.~Whitt.
\newblock Uniform conditional variability ordering of probability
  distributions.
\newblock \emph{J. Appl. Probab.}, 22\penalty0 (3):\penalty0 619--633, 1985.

\bibitem[Yan(1997)]{Yan1997}
M.~Yan.
\newblock Extension of convex functions.
\newblock math.ust.hk, 1997.

\bibitem[Yogeshwaran(2010)]{Yogesh_thesis}
D.~Yogeshwaran.
\newblock \emph{Stochastic geometric networks : connectivity and comparison}.
\newblock PhD thesis, Universit\'{e} Pierre et Marie Curie, Paris, France.,
  2010.
\newblock URL \url{http://tel.archives-ouvertes.fr/tel-00541054}.

\end{thebibliography}

\end{document}